\newtheorem{theorem}{Theorem}[section]
\newtheorem{lemma}[theorem]{Lemma}
\newtheorem{proposition}[theorem]{Proposition}
\newtheorem{corollary}[theorem]{Corollary}
\theoremstyle{definition}
\newtheorem{definition}[theorem]{Definition}
\newtheorem{as}[theorem]{Assumption}
\theoremstyle{remark}
\newtheorem{remark}[theorem]{Remark}
\numberwithin{equation}{section}
\newcommand{\K}{\mathcal{K}}
\newcommand{\Rd}{{\mathord{\mathbb R}^d}}
\newcommand{\R}{{\mathord{\mathbb R}}}
\newcommand{\grad}{\nabla}
\newcommand{\la}{\langle}
\newcommand{\ra}{\rangle}
\newcommand{\supp}{{\mathop{\rm supp\ }}}
\newcommand{\loc}{{\rm loc}}
\begin{document}

\title[A Blob Method for the Aggregation Equation]{A Blob Method for the Aggregation Equation}

\author{Katy Craig}
\address{Department of Mathematics, Rutgers University, 110 Frelinghuysen Road, Piscataway, NJ 08854-8019}
\email{kcraig@math.ucla.edu}
\thanks{This work was supported by
NSF grants DMS-0907931 and EFRI-1024765, as well as NSF grant 0932078 000, which supported Craig's visit and Bertozzi's residence at the Mathematical Sciences Research Institute during Fall 2013. }

\author{Andrea L. Bertozzi}
\address{Department of Mathematics, University of California, Los Angeles, 520 Portola Plaza, Los Angeles, CA 90095-1555}
\email{bertozzi@math.ucla.edu}

\subjclass[2010]{35Q35 35Q82 65M15 82C22; \newline \indent
\emph{Key words and phrases.} Aggregation equation, vortex blob method, particle method}

\begin{abstract}
Motivated by classical vortex blob methods for the Euler equations, we develop a numerical blob method for the aggregation equation. This provides a counterpoint to existing literature on particle methods. By regularizing the velocity field with a mollifier or ``blob function'', the blob method has a faster rate of convergence and allows a wider range of admissible kernels. In fact, we prove arbitrarily high polynomial rates of convergence to classical solutions, depending on the choice of mollifier. The blob method conserves mass and the corresponding particle system is both energy decreasing for a regularized free energy functional and preserves the Wasserstein gradient flow structure. We consider numerical examples that validate our predicted rate of convergence and illustrate qualitative properties of the method.
\end{abstract}

\maketitle

\section{Introduction}
The aggregation equation describes the evolution of a nonnegative density $\rho$ according a velocity field which is obtained by convolving the density with the gradient of a kernel $K: \Rd \to \R$,
\begin{align} \label{agg eqn}
\begin{cases}
\rho_t + \grad \cdot (\vec{v} \rho) = 0 \ , \quad \quad \quad \quad\vec{v} = -\grad K * \rho  \ , \\
\rho(x,0) = \rho_0(x) \ .
\end{cases}
\end{align}
The dynamics governed by this equation arise in a range of problems, including biological swarming \cite{MogilnerEdelstein,MogilnerEdelsteinBent, TopazBertozzi1 ,TopazBertozzi2}, robotic swarming \cite{PereaGomezElosegui,ChuangHuangDorsognaBertozzi}, molecular self-assembly \cite{DoyeWalesBerry, Wales, RechtsmanStillingerTorquato}, and the evolution of vortex densities in superconductors \cite{AmbrosioSerfaty,Weinan, LinZhang, DuZhang, MasmoudiZhang, Poupaud}. For swarming and molecular self-assembly, common choices of kernel include the repulsive-attractive Morse potential and power law potential,
\begin{align} \label{repulsive attractive potentials}
K(x) = C_r e^{-|x|/l_r} - C_a c^{-|x|/l_a}  \ ,\quad K(x) = |x|^a/a - |x|^b/b \ , \quad a>b\ . 
\end{align}
To model the evolution of vortex densities in superconductors, $K$ is chosen to be the two dimensional Newtonian potential,
\begin{align} \label{Newtonian potential}
K(x) = \frac{1}{2 \pi} \log |x| \ . 
\end{align}

In addition to utility in a range of applications, the aggregation equation possesses several features of current mathematical interest. It is \emph{non-local}---the motion of the density at any point depends on the value of the density at every point---and when $K$ is symmetric it is formally a \emph{gradient flow} in the Wasserstein metric. When $K$ has low regularity at the origin, solutions may blow up in finite time or form rich patterns as they approach a steady state. In recent years, there has been substantial interest in these structures from both analytic and numerical perspectives \cite{BalagueCarrilloLaurentRaoul,BalagueCarrilloLaurentRaoul_Dimensionality,Bertozzietal_RingPatterns, BalagueCarrilloYao, BertozziLaurentLeger, BertozziGarnettLaurent, BertozziBrandman, BertozziCarrilloLaurent, 5person2, CarrilloChipotHuang, Dong, FellnerRaoul, FellnerRaoul2, FetecauHuangKolokolnikov, FetecauHuang, HuangBertozzi, HuangBertozzi2, Kolokolnikovetal_StabilityRingPatterns, Poupaud, Raoul,SunUminskyBertozzi, TopazBertozzi1,HuangWitelskiBertozzi}. For example, Kolokolnikov, Sun, Uminsky, and Bertozzi studied steady states for repulsive-attractive power law kernels using linear stability analysis, complemented with numerical examples computed by a particle method \cite{Kolokolnikovetal_StabilityRingPatterns}. Particle methods have also been used in purely analytic work, due to the close relationship between particle approximations and weak measure solutions from the perspective of Wasserstein gradient flow \cite{5person, 5person2, BonaschiCarrilloDiFrancescoPeletier}.

In spite of the significant activity investigating qualitative properties of solutions, much of it presented alongside numerical examples, analysis of numerical methods has begun only recently. Carrillo, Choi, and Hauray proved that a particle method converges to a weak measure solution when the kernel has power law growth $|x|^a$, $2-d < a\leq 2$ \cite{CarrilloChoiHauray}. Carrillo, Chertock, and Huang developed a finite volume method for a wide class of nonlinear, nonlocal equations, including the aggregation equation, and proved the existence of a related discrete free energy which is dissipated along the scheme \cite{CarrilloChertockHuang}. Most recently, James and Vauchelet developed a finite difference method for a generalization of the one dimensional aggregation equation and proved its convergence to weak measure solutions \cite{JamesVauchelet}.

In this paper, we develop a new numerical method for the multidimensional aggregation equation for a wide range of kernels, including the Newtonian potential, repulsive-attractive Morse potentials, and repulsive-attractive power law potentials. In Section \ref{blob method def section}, we define the numerical method, which is a particle method in which the kernel is regularized by convolution with a smooth, rapidly decreasing mollifier or \emph{blob function}. In Section \ref{conserved quantities section}, we show that the numerical solutions conserve mass and the corresponding particle system is energy decreasing for a regularized free energy functional and preserves the formal Wasserstein gradient flow structure.
 In Section \ref{ConvergenceSection}, we prove that our numerical solutions converge to classical solutions of the aggregation equation. In Section \ref{NumericsSection}, we provide numerical examples which validate our theoretically predicted rate of convergence and illustrate qualitative properties of the method. In Section \ref{ConclusionSection}, we describe directions for future work.

Our numerical method is inspired by classical \emph{vortex blob methods} for the vorticity formulation of the Euler equations, which is structurally similar to the aggregation equation, particularly when the kernel is the Newtonian potential, $K = (-\Delta)^{-1}$ \cite{Chorin,HaldDelPrete,Hald,BealeMajda1,BealeMajda2,CottetRaviart,AndersonGreengard}. This equation describes the evolution of the vorticity $\omega$ according to a velocity field obtained by convolving the vorticity with the two or three dimensional Biot-Savart kernel $K_d$,
\begin{align} \label{voreqn2}
\begin{cases}
\omega_t + \vec{v} \cdot \grad \omega = (\grad \vec{v}) \omega \ ,  \quad \quad \quad \vec{v} = \vec{K}_d * \omega  \ ,\\
\omega(x,0) = \omega_0(x)  \ .
\end{cases}
\end{align} 
The velocity field for (\ref{voreqn2}) may be rewritten as $v = \grad^\perp \Delta^{-1} \omega$, and when $K = (-\Delta)^{-1}$, the velocity field for the aggregation equation is $v = \grad \Delta^{-1} \rho$. Due to these similarities, there has also been interest in behavior of equations for which the velocity field is a combination of $\grad^\perp \Delta^{-1}  \rho$ and $\grad \Delta^{-1} \rho$ \cite{DuZhang, SunUminskyBertozzi_Birkhoff}.

While the main features of our method are analogous to vortex blob methods for the Euler equations, there are a few key differences. First of all, we consider the equation in dimensions $d \geq 1$, and we allow both singular and non-singular kernels. Also, in spite of the structural similarity between the aggregation equation and the Euler equations, an important difference from the perspective of particle methods is that the velocity field in the aggregation equation is not divergence free, but is instead a gradient flow. Adapting blob methods to the compressible case is relatively new, building on Eldredge's results for compressible fluids in the engineering literature \cite{Eldredge} and Duan and Liu's results for the $b$-equation \cite{DuanLiu}. For our purposes, conservation of mass proves to be a sufficient substitute for incompressibility.

The Lagrangian nature of our method offers three main benefits over a finite difference or finite element method. First, it allows us to avoid the main form of numerical diffusion. Second, it only requires computational elements in regions where the density is nonzero. Third, it ensures that the method is inherently adaptive, concentrating computational elements in areas where particles accumulate and thereby increasing resolution near singularities. Our method has further benefits over a particle method since, instead of removing a singularity of the kernel at zero by redefining $\grad K(0) = 0$, we regularize the kernel by convolution with a mollifier. Because of this, we are able to obtain arbitrarily high orders of convergence $\mathcal{O}(h^{mq})$, depending on $m \geq 4$, $\frac{1}{2}< q<1$, which describe the structure of the mollifier. On the other hand, particle methods for the Euler equations only attain $\mathcal{O}(h^{2-\epsilon})$ rate of convergence \cite{GoodmanHouLowengrub, HouLowengrub}, and our numerical simulations  indicate that the same rate of convergence holds for the aggregation equation as well.

Though we are the first to prove quantitative rates of convergence a numerical method for the aggregation equation and we are the first to implement a blob method numerically, we are not the first to consider regularized methods for aggregation and aggregation-like equations. Lin and Zhang \cite{LinZhang} used blob methods to prove the existence of weak solutions to the two dimensional aggregation equation when the kernel is the Newtonian potential. Bhat and Fetecau \cite{BhatFetecau1, BhatFetecau15, BhatFetecau2} and Norgard and Mohseni \cite{NorgardMohseni1,NorgardMohseni2} studied a similar regularization for Burger's equation, which is related to the aggregation equation when $K$ is the Newtonian potential \cite[Section 4]{BertozziGarnettLaurent}.
Bhat and Fetecau compute several exact solutions to the regularized problem and observe similar phenomena near blowup time to our simulations \cite{BhatFetecau2}.

\section{Blob Method} \label{blobmethoddef}
We begin by recalling some basic properties of the aggregation equation. It is a \emph{continuity equation}, describing the evolution of a density $\rho$ according to a velocity field $\vec{v}$ so that the mass of $\rho$ is conserved. Conservation of mass is the key property which allows us to adapt vortex blob methods from the classical fluids case, playing the same role that incompressibility plays for the Euler equations.

Let $X^t(\alpha)$ be the particle trajectory map induced by the velocity $\vec{v} = -\grad K*\rho$,
\begin{align*}
\frac{d}{dt} X^t(\alpha) = - \grad K * \rho(X^t(\alpha),t) \ , \quad \quad
X^0(\alpha) = \alpha \ .
\end{align*}
Rewriting the aggregation equation in terms of the material derivative gives
\[ \frac{D\rho}{Dt}  = - (\grad \cdot \vec{v}) \rho  \ . \] 
Thus, $\rho$ evolves along particle trajectories according to
\begin{align*}
\begin{cases}
\frac{d}{dt} \rho(X^t(\alpha),t) &=  \left(\Delta K * \rho(X^t(\alpha),t) \right) \ \rho(X^t(\alpha),t) \ , \\
\rho(X^0(\alpha),0) &= \rho_0(\alpha) \ .
\end{cases}
\end{align*}
If $J(\alpha, t) = \det( \grad_\alpha X^t(\alpha))$ is the Jacobian determinant of the particle trajectories, conservation of mass implies
\begin{align*}
\rho(X^t(\alpha),t) |J^t(\alpha)| = \rho(\alpha,0) \ .
\end{align*}
This allows us to formally rewrite the velocity field and the divergence of the velocity field in terms of integration in the Lagrangian coordinates,
\begin{align} v(x,t) = -\grad K * \rho(x,t) = -\int_\Rd \grad K (x-X^t(\alpha)) \rho_0(\alpha) d \alpha \label{velocityform} \ , \\
\grad \cdot v(x,t) = - \Delta K * \rho(x,t) = - \int_\Rd \Delta K (x-X^t(\alpha)) \rho_0(\alpha) d \alpha \ . \nonumber
\end{align}

\subsection{Definition of blob method} \label{blob method def section}
Let $h \mathbb{Z}^d$ be a d-dimensional integer grid with spacing $h$. Suppose that for $t \in [0, T]$, the density $\rho(x,t)$ is compactly supported.
Our blob method provides a way to compute approximate particle trajectories starting at the grid points $ih \in h\mathbb{Z}^d$ and then compute the approximate density and velocity along these particle trajectories. We write $X_i(t)$ for $X(ih,t)$, and in general we use a subscript $i$ to denote a quantity transported along the particle trajectory beginning at $ih$, e.g. $v_i(t)$ for $v(X_i(t),t)$ and $\grad \cdot v_i(t)$ for $\grad \cdot v(X_i(t),t)$.

The approximate particle trajectories, densities, and velocities are prescribed by a finite system of ordinary differential equations. Solutions of this system may be computed numerically by a variety of methods to a high degree of accuracy, so that the dominant error of the blob method comes from the reduction of the original aggregation equation to the system of ODEs. 
This reduction has two steps. First, to avoid a possible singularity of $\grad K$, we regularize $\grad K$ by convolution with a smooth, radial, rapidly decreasing mollifier or ``blob function'' $\psi$. For $\delta >0$, we write $\psi_\delta(x) = \delta^{-d} \psi(x/\delta)$ and $K_\delta = K * \psi_\delta$. Second, we use a particle approximation for the initial density. Specifically, we place a Dirac mass of weight ${\rho_0}_j h^d$ at each point of the grid $h \mathbb{Z}^d$,
\begin{align*}
\rho_0^{\text{particle}}(\alpha) = \sum_{j} \delta(\alpha - jh) {\rho_0}_j h^d  \ .
\end{align*}
Combining this regularization and discretization with the integral form for the velocity (\ref{velocityform}) leads to the following approximate velocity.
\begin{definition}[approximate velocity along exact particle trajectories]  \label{approx vel exact part}
\begin{align*}
v^h(x,t) &= -\int_\Rd \grad K_\delta (x-X^t(\alpha)) \rho_0^{\text{particle}}(\alpha) d \alpha = -  \sum_{j} \grad K_\delta(x-X_j(t)) {\rho_0}_j h^d \ .
\end{align*}
\end{definition}

We now turn to the definition of the blob method. Following the fluids literature, we use tildes to distinguish approximate quantities from their exact counterparts.\begin{definition}[blob method] \label{blob method def}
\ \\ Approximate particle trajectories:
$\begin{cases}
\frac{d}{dt} \tilde{X}_i(t) &= -\sum_{j} \grad K_\delta (\tilde{X}_i(t) - \tilde{X}_j(t)) {\rho_0}_j h^d \\
\tilde{X}_i(0) &= ih
\end{cases}$
Approximate velocity field: $ \tilde{v}_i(t) = - \sum_{j} \grad K_\delta(\tilde{X}_i(t)-\tilde{X}_j(t)) {\rho_0}_j h^d$ \\
Approximate divergence of velocity field:
$\grad \cdot \tilde{v}_i(t) = - \sum_{j} \Delta K_\delta(\tilde{X}_i(t)-\tilde{X}_j(t)) {\rho_0}_j h^d$
Approximate density:$\begin{cases}
\frac{d}{dt} \tilde{\rho}_i(t) &= -\grad \cdot \tilde{v}_i(t)  \tilde{\rho}_i(t) \\
\tilde{\rho}_i(0) &= {\rho_0}_i 
\end{cases}$
\end{definition}

Due to the regularization of the kernel, $\grad K_\delta$ and $\Delta K_\delta$ are locally Lipschitz. Thus, for any $\delta >0$, there exists a unique solution to this system of ODEs on some time interval $[0,T_0]$. It is part of our result that this time interval $[0,T_0]$ must be at least as large as the interval of existence for the corresponding classical solution to the aggregation equation.

When $K$ is the Newtonian potential $(\Delta)^{-1}$, there is a simple heuristic interpretation of the blob method: it approximates the density by a sum of blobs that follow particle trajectories. This follows by taking the divergence of the velocity, so $\rho = \Delta K*\rho = -\grad \cdot \vec{v}$. Using this relationship we define an alternative approximate density
 \begin{align*}
\tilde{\rho}(x,t) =  \sum_{j} \Delta K_\delta (x- \tilde{X}_j(t)) {\rho_0}_j h^d = \sum_{j} \psi_\delta (x- \tilde{X}_j(t)) {\rho_0}_j h^d \ .
\end{align*}
This is the analogue of two dimensional vortex blob methods for the Euler equations. However, as the purpose of this paper is to devise a numerical method for a variety of kernels, we will focus on the more general method of computing the approximate density from Definition \ref{blob method def}. 

\subsection{Conserved Quantities} \label{conserved quantities section}
For computational purposes, we only calculate the blob method along particle trajectories originating at grid points $h \mathbb{Z}^d$. Still, a simple extension of the method allows one to compute approximate particle trajectories, velocity, and density starting from anywhere in Euclidean space.
\begin{definition}[blob method: off the grid] \label{off grid def}
\ \\ Approximate particle trajectories: $
\begin{cases}
\frac{d}{dt} \tilde{X}(\alpha,t) = -\sum_{j} \grad K_\delta (\tilde{X}(\alpha,t) - \tilde{X}_j(t)) {\rho_0}_j h^d \\
\tilde{X}(\alpha,0) \hspace{3mm} = \alpha
\end{cases}
$ \\
Approximate velocity field: $
\tilde{v}(x,t) \hspace{.5mm} = - \sum_{j} \grad K_\delta(x-\tilde{X}_j(t)) {\rho_0}_j h^d$ \\
Approximate density: $ \begin{cases}
\frac{d}{dt} \tilde{\rho}(\tilde{X}(\alpha,t),t) \hspace{-3mm} &= - \grad \cdot \tilde{v}(\tilde{X}(\alpha,t),t) \tilde{\rho}(\tilde{X}(\alpha,t),t) \\
\tilde{\rho}(\tilde{X}(\alpha,0), 0) &=  \rho_0(\alpha)
\end{cases}$
\end{definition}

From this perspective, the blob method preserves the continuity equation structure of the aggregation equation and consequently conservation of mass,
\begin{align}
\frac{D}{Dt} \tilde{\rho} = -(\grad \cdot \tilde{v}) \tilde{\rho} \implies \frac{d}{dt} \int_{\tilde{X}(\Omega,t)} \rho dx = 0 \ . \label{blob continuity equation}
 \end{align}

For the remainder of the section, we suppose $K$ and $K_\delta$ are even. In this context, the particle system corresponding to the blob method 
\begin{align} \label{particle system} \hat{\rho}(x,t) = \sum_{j } \delta(x - \tilde{X}_j(t)) {\rho_0}_j h^d
\end{align}
preserves the aggregation equation's Wasserstein gradient flow structure and is energy decreasing for a regularized free energy functional. Recall that the aggregation equation is formally the gradient flow of the interaction energy,
 \[ E(\rho) = \frac{1}{2} \int_{\Rd \times \Rd}  \rho(x) K(x-y) \rho(y) dx  dy \ . \]
To see this, recall that the Wasserstein gradient is defined by ${\grad_W E(\rho) = - \grad \cdot (\rho \grad \frac{\partial E}{\partial \rho})}$, where $\frac{\partial E}{\partial \rho}$ is the functional derivative of $E$ at $\rho$. Applying this to $E$, we recover the aggregation equation as the gradient flow,
\[ \rho_t = - \grad_W E(\rho) = \grad \cdot (\rho (\grad K * \rho)) \ . \]
This gradient flow structure may be made rigorous given sufficient convexity, regularity, and decay of the kernel \cite{AGS, 5person, 5person2}.

In analogy with the aggregation equation, the particle system corresponding to the blob method is formally the Wasserstein gradient flow of the regularized energy 
 \begin{align} \label{regularized energy} E_\delta(\rho) = \frac{1}{2} \int_{\Rd \times \Rd}\rho(x) K_\delta(x-y) \rho(y) dx dy \ .
 \end{align}
In particular, the particle system (\ref{particle system}) is a weak measure solution of ${\frac{d}{dt} \hat{\rho} + \grad \cdot (\tilde{v} \hat{\rho}) = 0}$, where the velocity may be rewritten as
 \begin{align*} \tilde{v}(x,t) &= - \sum_{j} \grad K_\delta(x-\tilde{X}_j(t)) {\rho_0}_j h^d = - \grad K_\delta * \hat{\rho}(x,t) = \grad \frac{ \partial E_\delta}{\partial \hat{\rho}} \ .
 \end{align*}

Though the gradient flow structure may be purely formal, the regularized energy (\ref{regularized energy}) always decreases along particle solutions corresponding to the blob method.  Rewriting the regularized energy in Lagrangian coordinates,
\begin{align*} E_\delta(\hat{\rho}(t)) &= \frac{1}{2}\int_{\Rd \times \Rd}\hat{\rho}(\alpha,0) K_\delta(\tilde{X}(\alpha, t) - \tilde{X}(\beta,t)  \hat{\rho}(\beta, 0) d \alpha  d \beta \ , \\
&=  \frac{1}{2} \sum_{i,j} K_\delta(\tilde{X}_i(t) - \tilde{X}_j(t)) {\rho_0}_i {\rho_0}_j h^d h^d \ .
\end{align*}
Differentiating with respect to time,
\begin{align*} \frac{d}{dt} E_\delta(\hat{\rho}(t))
&= \frac{1}{2} \sum_{i,j} \grad K_\delta(\tilde{X}_i(t) - \tilde{X}_j(t)) \cdot \left(\frac{d}{dt} \tilde{X}_i(t)  - \frac{d}{dt} \tilde{X}_j(t) \right) {\rho_0}_i {\rho_0}_j h^d h^d  \ .
\end{align*}
The terms of the sum are the dot product between two $d$ dimensional vectors. For $l = 1, \dots, d$, define 
\[ M^l = \{M_{ij}^l \} = \left\{ \left[\grad K_\delta(\tilde{X}_i(t) - \tilde{X}_j(t))\right]_l \right \} \ , \quad\quad v = \{ v_i \} = \{{\rho_0}_i  h^d\}  \ , \]
where $[ \cdot ]_l$ denotes the $l$th component of the vector, $l = 1, \dots, d$. By definition of the particle trajectories, $[\frac{d}{dt} \tilde{X}_i(t)]_l = [-M^lv]_i$, and by the symmetry of $K$, $M_{ij}^l = -M_{ji}^l$. Letting $\diamond$ denote the element-wise product of vectors, we have
\begin{align*} \frac{d}{dt} E_\delta(\hat{\rho}(t))
&= \sum_{l = 1}^d v^t M^l (-M^l v \diamond v)  = - \sum_{l = 1}^d  (M^l v)\cdot (M^l v \diamond v)  = - \sum_{l=1}^d \sum_i [M^l v]_i^2 {\rho_0}_ih^d \ .
\end{align*}
Since ${\rho_0}_i \geq 0$, the regularized energy $E_\delta$ decreases along the particle system associated to the blob method.

\begin{remark}[$\hat{\rho}$ versus $\tilde{\rho}$]
Blob methods for the Euler equations often only consider approximate particle trajectories, setting aside the approximate density \cite{BealeMajda1,BealeMajda2, AndersonGreengard}. For our purpose of numerically approximating classical solutions, we follow Beale \cite{Beale} and define the approximate density along particle trajectories as a function $\tilde{\rho}:h \mathbb{Z}^d \to \R$ (Definition \ref{blob method def}). However, from the perspective of Wasserstein gradient flow, given the approximate particle trajectories $\tilde{X}$, the natural choice of approximate density  would be the particle system $\hat{\rho}$ (\ref{particle system}). This is also likely the best choice for approximating weak measure solutions to the aggregation equation, though we leave the topic to future work. (See Lin and Zhang \cite{LinZhang}.)
\end{remark}

\section{$L^p$ Convergence of blob method to smooth solutions}\label{ConvergenceSection}
 We now prove the convergence of the blob method to classical solutions of the aggregation equation. Our approach is strongly influenced by results on the convergence of vortex blob methods for the Euler equations \cite{BealeMajda1, BealeMajda2, Beale, AndersonGreengard}, though our proof has different features due to the gradient flow structure of our problem and the fact that we allow a wider range of kernels.

Let $D^+_j$ denote the
forward difference operator in the $j^{th}$ coordinate direction. For $1 \leq p < +\infty$, we consider the following discrete $L^p$ and Sobolev norms of grid functions $u:h \mathbb{Z}^d\to \R$:
\begin{align*}
\|u\|_{L^p_h} = \left( \sum_{i \in \mathbb{Z}^d} |u_i|^p h^d \right)^{1/p} , \quad 
\|u\|_{W^{1,p}_h} = \left(\|u\|_{L^p_h}^p + \sum_{j=1}^d \|D_j^+ u\|_{L^p_h}^p \right)^{1/p}.
\end{align*}
Likewise, we define an $L^\infty_h$ norm and inner product by
\[\|u\|_{L^\infty_h} = \sup_{i \in \mathbb{Z}^d} |u_i| \ , \quad (u,g)_h = \sum_{i \in \mathbb{Z}^d} u_i g_i h^d \ . \]

Given any function $u(x)$ defined on all of $\Rd$, we may consider it as a function on $h \mathbb{Z}^d$ by defining $u_i = u(ih)$. Thus, we may also consider the size of any function $u(x)$ in the above discrete norms.
We may also define the discrete $L^p$ norm on a subset $\Omega \subseteq \Rd$ by $\|u\|_{L^p_h(\Omega)} = \| 1_{\Omega} u\|_{L^p_h}$, where $(1_{\Omega})_i = 1$ if $ih \in \Omega$ and $0$ otherwise. We say that $u \in L^p_h$ is supported in $\Omega$ if $u = 1_\Omega u$.

We define the dual Sobolev norm with duality pairing $\la \cdot, \cdot \ra$ by \[ \|u\|_{W^{-1,p}_h} = \sup_{g \in W^{1,p'}_h} \frac{|\la u_i,g_i \ra|}{\|g_i\|_{W^{1,p'}_h}} \  ,\]
where $1/p + 1/p' = 1$.
We may consider any  $u \in L^p_h$ as a linear functional on $W^{1,p'}_h$ by the duality pairing $\la u, g \ra = (u,g)_h$.

The discrete $L^p$, Sobolev, and dual Sobolev norms are related by the following inequalities. See appendix Section \ref{discrete norms section} for proofs and references.
\begin{proposition} \label{discrete norm prop}
Suppose $1 \leq q \leq p \leq +\infty$, $0< h\leq 1$, and $u \in L^p_h$. Define $B_R = \{x \in \Rd : |x| < R\}$.
 \begin{enumerate}[(a)]
\item $\|u\|_{W^{-1,p}_h} \leq \|u\|_{L^p_h}$.
\item $\|u\|_{L^p_h} \leq (1+2d/h)  \|u\|_{W^{-1,p}_h}$.
\item For any $\Omega \subseteq B_R$, $ \|u\|_{L^{q}_h(\Omega)} \leq C_{p,q,R} \|u\|_{L^p_h(\Omega)} $.
\item If $u$ is supported in $B_R$, $\|u\|_{W^{-1,q}_h} \leq C_{p,q,R} \|u\|_{W^{-1,p}_h} $.
\item Given $l \in \mathbb{Z}^d$, let $T^l$ denote translation on the grid $h \mathbb{Z}^d$ in the direction $l$. Any finite difference operator of the form 
\[ \grad_i^h = \frac{1}{h} \sum_{|l| \leq l_0} \vec{a_l}(h)T^l \]
with $|\vec{a}_l (h)| \leq C_0$ satisfies $\|\grad_i^h u\|_{W^{-1,p}_h} \leq C \|u\|_{L^p_h}$.
\end{enumerate}
\end{proposition}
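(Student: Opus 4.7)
The plan is to handle the five parts in order, using only standard duality and finite-difference identities. Part (a) is immediate: $\|g\|_{L^{p'}_h}\leq\|g\|_{W^{1,p'}_h}$ directly from the defining sum, so discrete H\"older in the pairing $(u,g)_h$ gives $|\la u,g\ra|\leq\|u\|_{L^p_h}\|g\|_{W^{1,p'}_h}$ and taking the supremum yields the bound. For part (b), I would use the standard dual characterization of $L^p_h$ to pick $g$ with $\|g\|_{L^{p'}_h}=1$ and $(u,g)_h=\|u\|_{L^p_h}$; the remaining estimate is $\|g\|_{W^{1,p'}_h}\leq (1+2d/h)\|g\|_{L^{p'}_h}$, which follows from $\|D^+_j g\|_{L^{p'}_h}\leq (2/h)\|g\|_{L^{p'}_h}$ (triangle inequality applied to $D^+_j g_i=(g_{i+e_j}-g_i)/h$) combined with the elementary inequality $(\sum_k a_k^{p'})^{1/p'}\leq \sum_k a_k$ for $a_k\geq 0$.

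For part (c) I would apply discrete H\"older with conjugate exponents $(p/q,\,p/(p-q))$ to obtain
\begin{align*}
\|u\|_{L^q_h(\Omega)}^q \leq \|u\|_{L^p_h(\Omega)}^q\Bigl(\sum_{ih\in\Omega} h^d\Bigr)^{(p-q)/p},
\end{align*}
and note that for $h\leq 1$ the counting factor $\sum_{ih\in B_R}h^d$ is bounded by the volume of $B_{R+\sqrt d}$, which depends only on $R$ and $d$. Part (d) then follows by duality combined with a smooth cutoff $\chi$ equal to $1$ on $B_R$ and supported in $B_{R+1}$: since $u=1_{B_R}u$, we have $\la u,g\ra=\la u,\chi g\ra$; a discrete Leibniz rule controls $\|\chi g\|_{W^{1,p'}_h}$ by $\|g\|_{W^{1,p'}_h(B_{R+1})}$ with constant depending only on $\chi$, and (c) converts this into $C_{R,p,q}\|g\|_{W^{1,q'}_h}$ (since $q\leq p$ implies $p'\leq q'$).

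For part (e), summation by parts is the main tool: the adjoint of $T^l$ under $(\cdot,\cdot)_h$ is $T^{-l}$, so $(\grad^h u,g)_h=(u,(\grad^h)^*g)_h$ with $(\grad^h)^*g=h^{-1}\sum_l \vec a_l(h)\,T^{-l}g$. Rewriting each $h^{-1}(T^{-l}g-g)$ as a bounded linear combination of nearest-neighbor forward differences by telescoping along coordinate axes (possible since $|l|\leq l_0$), and using the implicit finite-difference consistency $\sum_l \vec a_l(h)=O(h)$ that absorbs the leftover $h^{-1}(\sum_l \vec a_l(h))g$ term, one obtains $\|(\grad^h)^*g\|_{L^{p'}_h}\leq C\|g\|_{W^{1,p'}_h}$; discrete H\"older in $(u,(\grad^h)^*g)_h$ then yields the stated bound. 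The most delicate step is the cutoff argument in (d), where the Leibniz remainder involving $D^+_j\chi$ must be absorbed into a constant depending only on $\chi$ and $R$, uniformly in $h\leq 1$; once that bookkeeping is in hand, everything else reduces to direct manipulation of the defining sums.
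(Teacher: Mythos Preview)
Your proofs of (a)--(c) match the paper's essentially verbatim. For (e) the paper simply cites Beale--Majda \cite[Proposition 2.1]{BealeMajda1} without argument; your summation-by-parts sketch is a correct reconstruction, and you are right that a consistency condition $\sum_l \vec a_l(h)=O(h)$ (in fact $=0$ for a genuine difference approximation to $\nabla$) is needed---without it the estimate fails, as the example $\nabla^h=h^{-1}I$ already shows. The proposition's phrase ``finite difference operator'' carries this implicitly, and you were right to flag it.

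The one genuine divergence is in (d). The paper proves a discrete Sobolev extension lemma (Lemma~\ref{extlemma}): given $g\in W^{1,p'}_h$, it restricts $g$ to a cube $Q_R\supseteq B_R$, extends back to $Pg\in W^{1,p'}_h(\Rd)$ with $\|Pg\|_{W^{1,p'}_h}\leq C\|g\|_{W^{1,p'}_h(Q_R)}$, and then uses $\la u,g\ra=\la u,Pg\ra$ together with (c). Your cutoff argument is a cleaner alternative: multiplying by a fixed smooth $\chi$ with $\chi\equiv 1$ on $B_R$ and $\mathrm{supp}\,\chi\subseteq B_{R+1}$ accomplishes the same localization, and the discrete Leibniz rule $D^+_j(\chi g)_i=\chi_{i+e_j}(D^+_j g)_i+(D^+_j\chi)_i g_i$ immediately gives $\|\chi g\|_{W^{1,p'}_h}\leq C_\chi\|g\|_{W^{1,p'}_h(B_{R+2})}$ uniformly in $h\leq 1$ (the support drifts by one grid step, hence the $+2$ rather than your $+1$, but this is harmless). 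Your route avoids the partition-of-unity and reflection construction of Lemma~\ref{extlemma} entirely; the paper's extension lemma, on the other hand, is a reusable tool of independent interest.
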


Measuring the convergence of the particle trajectories and velocity in discrete $L^p$ norms allows us to apply the classical theory of integral operators, both singular and otherwise. We measure the convergence of the density in the discrete $W^{-1,p}$ norm to reflect the fact that, in the most singular case, when $K = (\Delta)^{-1}$, the velocity $v = \grad K * \rho$ has one more derivative than the density: $- \grad \cdot v = \rho$. In general, the velocity may have more regularity with respect to the density, but we prefer to use the same norms for all kernels and reflect the improved regularity in better convergence estimates.

We now turn to the assumptions we place on the kernel, mollifier, and exact solution. These depend on a regularity parameter $L \geq \max \{ d+2, 4\}$ and an accuracy parameter $m\geq 4$.

 \begin{as}[kernel] \label{kernel assumption}
 Suppose that $K = \sum_{n =1}^N K_n$, where for each $K_n$, there exists $S_n \geq 1-d$ so that
 \[ |\partial^\beta \grad K_n(x)| \leq C|x|^{S_n-|\beta|} , \quad \forall x \in \Rd \setminus \{0\}  , \quad 0 \leq |\beta|\leq \max \{L, s + d-1\} \ , \] 
 for $s = \min_n S_n$ and $S = \max_n S_n$.
 If $S_n = 1-d$, we require $K_n(x)$ to be a constant multiple of the Newtonian potential.
\end{as}
The Newtonian potential, repulsive-attractive Morse potential, and repulsive-attractive power law potential (\ref{repulsive attractive potentials},\ref{Newtonian potential}) all satisfy this assumption.

\begin{as}[mollifier] \label{mollifier assumption}
$\psi$ is radial, $\int \psi = 1$, and the following hold:
\begin{enumerate}
\item Accuracy:  $\int x^\gamma \psi(x) dx = 0$ for $1 \leq |\gamma| \leq m-1$ and $\int |x|^m |\psi(x)| dx < +\infty$.
\item Decay: $\exists \  \epsilon >0$ such that $ |x|^{d+\epsilon} | \psi(x)| \leq C$.
\item Regularity: $\psi \in C^L$ and $|x|^{d+|\beta|} |\partial^\beta \psi(x)| \leq C$ for all $|\beta| \leq L$.\\ 
If $S >0$, $\exists \  \epsilon >0$ such that $ |x|^{d+S+\epsilon} | \partial^\beta \psi(x)| \leq C$ for all $|\beta| \leq L$.
\end{enumerate}
If the above holds for $L$ arbitrarily large, we say it holds for $L = +\infty$. If $s = 1-d$, we require $L= +\infty$.
\end{as}
\begin{remark}[accuracy of mollifier] \label{mollifierAccuracy}
The accuracy assumption on the mollifier ensures that for any multiindex $\gamma$ with $|\gamma| < m$,
\begin{align*}
 \int(x-y)^{\gamma} \psi(y) dy = \sum_{\nu \leq \gamma} \binom{\gamma}{\nu} x^{\gamma - \nu} \int y^\nu \psi(y)dy = x^{\gamma} \int \psi(y) dy = x^{\gamma} \ .
 \end{align*}
Thus,  convolution with the mollifier preserves polynomials of order less than $m$. In particular, if $K(x)$ is a polynomial of order at most $m$, $\grad K_\delta = \grad K$ and $\Delta K_\delta = \Delta K$. 
\end{remark}
The following mollifier satisfies Assumption \ref{mollifier assumption} with $d=1$, $m=4, L = +\infty$:
\begin{align} \label{m4d1mollifier}
\psi(x) = \frac{4}{3\sqrt{\pi}}e^{-|x|^2} -\frac{1}{6\sqrt{\pi}} e^{-|x|^2/4} \ .
\end{align}
See Majda and Bertozzi for an algorithm which allows one to construct mollifiers satisfying Assumption \ref{mollifier assumption} for arbitrarily large $m$ \cite[Section 6.5]{MajdaBertozzi}.

  \begin{as}[exact solution] \label{exact sol reg as}
Suppose $\exists \ T>0$, $r \geq \max \{m-(s+d-2),L\}$ so that $\rho \in C^1([0,T],C^{r}(\Rd))$ is a solution to the aggregation equation. Suppose also that $\exists \ R_0 > 1$ so that the support of $\rho(x,t)$ remains bounded in $B_{R_0-1}$
 and, for all $\alpha \in B_{R_0+2}$, $|X^t(\alpha)|$ is bounded for $t \in [0,T]$. If $s=1-d$, for all $\alpha \in \Rd$, $|X^t(\alpha)|$ is bounded for $t \in [0,T]$. \end{as}
If $K$ is a repulsive-attractive power law kernel,
 \[ K(x) = \frac{|x|^a}{a} - \frac{|x|^b}{b} \quad 2-d < b<a \ , \]
and the initial data is smooth, compactly supported, and radially symmetric, there exists an exact solution satisfying this assumption $\forall \ T>0$ \cite[Theorems 7 and 8]{BalagueCarrilloLaurentRaoul}.

The above assumptions guarantee the following regularity of the velocity field and particle trajectories. We give the proofs of these lemmas in Section \ref{reg velocity field section}.
\begin{lemma}[regularity of velocity field]\label{reg velocity field lemma}
The velocity field $\vec{v} = \grad K * \rho$ and its divergence $\grad \cdot \vec{v} = \Delta K*\rho$ belong to $C^m(\Rd) \cap C^{L}(\Rd)$ and for any $|\beta| \leq m$,
\[ |\partial^\beta \vec{v}(x,t)| \leq C (1+|x|^{(S-|\beta|)_+}) \ ,  \quad  |\partial^\beta \grad \cdot \vec{v}(x,t)| \leq C (1+|x|^{(S-1 - |\beta|)_+}) \ . \]
The constant depends on the kernel, exact solution, dimension, $\beta$, $T$, and $R_0$.\end{lemma}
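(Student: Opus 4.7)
The plan is to work kernel-by-kernel using linearity, then for each component $K_n$ split the convolution $-\grad K_n * \rho$ into a near-singularity piece and a far-field piece via a smooth cutoff. Assume first $S_n > 1-d$, so $\grad K_n$ is locally integrable. Pick a radial $\eta \in C^\infty(\Rd)$ with $\eta \equiv 1$ on $B_{1/2}$ and $\supp \eta \subset B_1$, and change variables $z = x - y$ to write $-\grad K_n * \rho(x,t) = I_1(x,t) + I_2(x,t)$, where $I_1(x,t) = \int \eta(z)\grad K_n(z)\rho(x-z,t)\,dz$ and $I_2(x,t) = \int (1-\eta(z))\grad K_n(z)\rho(x-z,t)\,dz$. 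The key observation is that $\rho$ depends on $x$ only through $x-z$, so $\partial_x^\beta \rho(x-z,t) = (-1)^{|\beta|}\partial_z^\beta \rho(x-z,t)$; then for any split $\beta = \beta_1+\beta_2$, integration by parts in $z$ (boundary terms vanishing by compact support of $\rho$) yields
\[ \partial_x^\beta I_j(x,t) = \int \partial_z^{\beta_1}\bigl[\chi_j(z)\grad K_n(z)\bigr]\,(\partial^{\beta_2}\rho)(x-z,t)\,dz, \qquad j=1,2, \]
with $\chi_1 = \eta$ and $\chi_2 = 1-\eta$.

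I then choose the split to balance the two competing constraints: local integrability of $\partial^{\beta_1}\grad K_n$ near the origin (requiring $|\beta_1| < S_n + d$) against regularity of $\rho$ (requiring $|\beta_2| \leq r$). The inequality $r \geq m-(s+d-2)$ from Assumption~\ref{exact sol reg as}, together with $S_n \geq s$, guarantees for $|\beta|\leq m$ a valid split with $|\beta_1| \leq S_n+d-1$ and $|\beta_2| \leq r$; for $|\beta| \leq L$ the trivial split $|\beta_1| = 0$, $|\beta_2| = |\beta| \leq r$ works because $r \geq L$. With such a split the near piece obeys $|\partial^\beta I_1| \leq C\|\partial^{|\beta_2|}\rho\|_\infty \int_{B_1} |z|^{S_n-|\beta_1|}\,dz \leq C$, and moreover $I_1(x,t) \equiv 0$ for $|x| > R_0$ since then $x-z \notin \supp\rho(\cdot,t)$. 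For the far piece I distinguish $|x|\leq 2R_0$ (apply the same integration-by-parts estimate, with the $z$-integrand supported on the bounded set $\{|z|\geq 1/2\} \cap \{x-z\in\supp\rho\}$) from $|x| > 2R_0$ (put all derivatives directly on $\grad K_n$, since the integrand is smooth in $x$ and $|x-y|\geq|x|/2$ for $y\in\supp\rho$, producing $|\partial^\beta I_2(x,t)| \leq C|x|^{S_n-|\beta|}\|\rho\|_1$ by Assumption~\ref{kernel assumption}). Summing over $n$ and taking $S = \max_n S_n$ yields the claimed bound $|\partial^\beta \vec v(x,t)| \leq C(1+|x|^{(S-|\beta|)_+})$.

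The Newtonian subcase $S_n = 1-d$ must be handled separately because no derivative of $\grad K_n$ is locally integrable, so the integration-by-parts step fails inside $B_1$. Assumption~\ref{mollifier assumption} demands $L=+\infty$ in this case, and $K_n$ is a constant multiple of the Newtonian potential, so I invoke classical potential theory: $\grad K_n$ is a vector-valued Calder\'on--Zygmund kernel, $\grad K_n * \rho \in C^\infty(\Rd)$ for $\rho \in C_c^\infty$, and the multipole expansion at infinity gives $|\partial^\beta(\grad K_n*\rho)(x)| \leq C|x|^{1-d-|\beta|}$, matching $(S-|\beta|)_+$ with $S=1-d$. The bounds for $\grad\cdot\vec v = -\Delta K*\rho$ follow by the identical scheme with $\Delta K_n$ replacing $\grad K_n$ (using $|\partial^\beta\Delta K_n(x)| \leq C|x|^{S_n-1-|\beta|}$ and the local-integrability threshold $|\beta_1| < S_n+d-1$); in the Newtonian subcase $\Delta K_n$ is a multiple of $\delta_0$, so $\Delta K_n*\rho$ is merely a constant times $\rho$, trivially meeting all bounds. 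The chief technical difficulty is the derivative-balancing in the near piece for non-Newtonian kernels when $s+d>2$: $\rho$ alone cannot absorb $|\beta|$ derivatives once $|\beta|$ approaches $m$, and the cure is to route the excess onto $\grad K_n$---which works precisely because Assumption~\ref{exact sol reg as} was calibrated to guarantee $r + (S_n+d-1) \geq m+1$ by design.
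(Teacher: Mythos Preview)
Your approach is essentially correct and parallels the paper's proof closely: both reduce by linearity to a single $K_n$, split derivatives between the kernel and $\rho$ using the calibration $r \geq m-(s+d-2)$, handle the Newtonian divergence via $\Delta K_n * \rho = C\rho$, and estimate the far field via $|x-y| \gtrsim |x|$. The paper justifies moving derivatives onto the kernel by an $\epsilon$-ball excision and limit, while you use a fixed smooth spatial cutoff $\eta$; these are equivalent standard devices, and your derivative-counting (a valid split with $|\beta_1|\leq S_n+d-1$, $|\beta_2|\leq r$ exists for $|\beta|\leq m$) is exactly the paper's mechanism.

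One genuine small gap: in your far-field step for $|x|>2R_0$ you ``put all derivatives directly on $\grad K_n$'' and invoke Assumption~\ref{kernel assumption}, but that assumption only bounds $\partial^\beta\grad K_n$ for $|\beta|\leq\max\{L,\,s+d-1\}$, which can be strictly less than $m$ (e.g.\ $d=1$, $s=0$, $L=5$, $m=10$). The paper avoids this by using the same $\beta=\gamma+\eta$ split in the far field, with $|\gamma|\leq S_n+d-1$ on the kernel and the remainder on $\rho$; you should do likewise rather than putting all $|\beta|$ derivatives on $\grad K_n$. Also, your separate Calder\'on--Zygmund treatment of $\grad K_n$ in the Newtonian case is valid but heavier than needed: since $|\grad K_n|\leq C|x|^{1-d}\in L^1_{\loc}$ and $L=+\infty$ forces $\rho\in C^\infty_c$, the trivial split $|\beta_1|=0$ already works there, which is what the paper does.
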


\begin{lemma}[regularity of particle trajectories] \label{particlelemma}
For $\alpha \in B_{R_0+2}$, the particle trajectories $X^t(\alpha)$ and their temporal inverses $X^{-t}(\alpha)$ uniquely exist, are continuously differentiable in time, and are $C^L$ in space. The Jacobian determinants ${J^t(\alpha) = \det \grad_\alpha X^t(\alpha)}$ and their inverses $J^{-t}(\alpha)$ are $C^{L-1}$ in space and satisfy
\begin{align} \label{conservationofmassformula}|J^t(\alpha)| \rho(X^t(\alpha),t) = \rho_0(\alpha) \ , \quad \forall \alpha \in B_{R_0+2} \ .
\end{align}
When $s=1-d$, the above holds with $B_{R_0+2}$ replaced by $\Rd$.
\end{lemma}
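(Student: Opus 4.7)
The plan is to combine the standard smooth-dependence theory for ODEs with Liouville's formula for the Jacobian, using the regularity of the velocity field given by Lemma \ref{reg velocity field lemma}.

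First I would establish existence, uniqueness, and temporal invertibility of the particle trajectories. By Lemma \ref{reg velocity field lemma}, the velocity $\vec{v} = -\grad K * \rho$ belongs to $C^L(\Rd)$ in space and is continuously differentiable in time, so it is locally Lipschitz in $x$. The Picard--Lindelöf theorem therefore provides a unique local solution $X^t(\alpha)$ to
\begin{equation*}
\tfrac{d}{dt} X^t(\alpha) = \vec{v}(X^t(\alpha), t), \qquad X^0(\alpha) = \alpha.
\end{equation*}
Assumption \ref{exact sol reg as} gives an a priori bound on $|X^t(\alpha)|$ for $\alpha \in B_{R_0+2}$ (or for all $\alpha \in \Rd$ when $s = 1-d$), ruling out finite-time escape and giving global existence on $[0,T]$. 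Running the same ODE backwards in time on the interval $[-T,0]$ yields $X^{-t}(\alpha)$ with identical regularity; uniqueness forces $X^{-t} \circ X^t = \mathrm{id}$, so the flow is a bijection with a $C^1$-in-time inverse.

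Next I would establish the spatial regularity of $X^t$ and of the Jacobian. Classical smooth-dependence results for ODEs (e.g.\ inductively differentiating the integral equation $X^t(\alpha) = \alpha + \int_0^t \vec{v}(X^s(\alpha),s)\,ds$ with respect to $\alpha$) imply that a flow driven by a $C^L$ vector field is itself $C^L$ in the initial data. Thus $X^t(\alpha)$ and $X^{-t}(\alpha)$ are $C^L$ in $\alpha$, and therefore $\grad_\alpha X^t$ is $C^{L-1}$. Since the determinant is a polynomial in the entries, $J^t(\alpha) = \det \grad_\alpha X^t(\alpha)$ is $C^{L-1}$ in $\alpha$; the same argument applied to the inverse flow gives the stated regularity of $J^{-t}$.

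Finally I would derive the conservation-of-mass formula by combining Liouville's identity for $J^t$ with the continuity equation for $\rho$. Differentiating the defining ODE for the Jacobian yields
\begin{equation*}
\tfrac{d}{dt} J^t(\alpha) = \bigl(\grad \cdot \vec{v}\bigr)(X^t(\alpha),t)\, J^t(\alpha),
\end{equation*}
and rewriting $\rho_t + \grad \cdot (\vec{v}\rho) = 0$ as a material derivative gives
\begin{equation*}
\tfrac{d}{dt} \rho(X^t(\alpha),t) = -\bigl(\grad \cdot \vec{v}\bigr)(X^t(\alpha),t)\, \rho(X^t(\alpha),t).
\end{equation*}
The two contributions cancel in the product rule, so $J^t(\alpha)\,\rho(X^t(\alpha),t)$ is constant in $t$ and equals $\rho_0(\alpha)$ at $t=0$. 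Because $J^0 \equiv 1$ and $J^t$ is continuous with no zero (as $X^t$ is a diffeomorphism), $J^t > 0$ throughout, so $|J^t| = J^t$ and we obtain (\ref{conservationofmassformula}).

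I do not expect a genuine obstacle here: the argument is entirely standard ODE theory once Lemma \ref{reg velocity field lemma} is in hand. The only mild subtlety is the Newtonian case $s = 1-d$, where $\supp \rho_0$ need not be contained in a ball on which $\vec{v}$ is globally Lipschitz with a uniform constant; this is absorbed by the assumption that $|X^t(\alpha)|$ remains bounded for all $\alpha \in \Rd$, so Picard--Lindelöf still applies on each compact piece of trajectory and the conservation identity then extends to all of $\Rd$.
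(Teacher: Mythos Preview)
Your proposal is correct and follows essentially the same approach as the paper: invoke Lemma \ref{reg velocity field lemma} for regularity of the velocity field, apply standard ODE existence/smooth-dependence theory to obtain the trajectories and Jacobians with the stated regularity on some maximal interval, and then use the a priori bound from Assumption \ref{exact sol reg as} to rule out finite-time escape and extend to all of $[0,T]$. The paper's proof is terser---it simply asserts that the regularity and conservation-of-mass identity follow from standard theory and then runs the continuation argument---whereas you spell out the Liouville/continuity-equation derivation of (\ref{conservationofmassformula}) explicitly; but the substance is the same.
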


We now state our main theorem, quantifying the convergence of the blob method.
\begin{theorem} \label{convergence theorem}
Suppose that the kernel, mollifier, and exact solution satisfy Assumptions  \ref{kernel assumption}, \ref{mollifier assumption}, and \ref{exact sol reg as} for $m\geq 4$ and $L \geq \max \{ d+2, 4\}$. Define 
\begin{align} \label{delta growth us}
G_L(\delta) =\begin{cases}
1 & \mbox{if } L < s + d \ , \\ 
|\log(\delta)| & \mbox{if } L = s +d \ ,  \\
\delta^{-(L-s-d)} &\mbox{if } L > s +d \ . \end{cases} 
\end{align}
 Suppose $1\leq p<+\infty$ and for some $\frac{1}{2}<q<1$, $0 < h^q < \delta \leq \frac{1}{2}$.
Then the quantities $(\tilde{X}, \tilde{v}, \grad \cdot \tilde{v}, \tilde{\rho})$ which comprise the blob method exist for all $t \in [0,T]$ and satisfy
\begin{align*}
\|X(t) - \tilde{X}(t)\|_{L^p_h(B_{R_0})} &\leq C (\delta^m + G_L(\delta)h^L )\ ,\\ 
\|v(t) - \tilde{v}(t)\|_{L^p_h(B_{R_0})} &\leq C(\delta^m + G_{L+1}(\delta)h^L)\ , \\ 
\|\rho(t) - \tilde{\rho}(t)\|_{W^{-1,p}_h} &\leq C(\delta^m +   G_{L+1}(\delta)h^L)\ ,
\end{align*}
provided that for some $\epsilon >0$,
 \begin{align}
 \label{tech cond}
C(1+2d)(\delta^m + G_{L+1}(\delta) h^L) &< \delta^2 h^{1+\epsilon} /2  \ .
\end{align}
The constant depends on the exact solution, the kernel, the mollifier, the dimension, $T$, $R_0$, $q$, and $p \in [1, +\infty)$.
\end{theorem}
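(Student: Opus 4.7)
The proof will follow the framework of classical vortex blob analysis (Beale--Majda, Anderson--Greengard): introduce the continuous-mollified velocity $v_\delta := -\grad K_\delta * \rho$ and the semi-discrete velocity $v^h$ of Definition~\ref{approx vel exact part}, and decompose
\begin{equation*}
v(X_i,t) - \tilde v_i(t) \;=\; \bigl[v - v_\delta\bigr](X_i,t) \;+\; \bigl[v_\delta - v^h\bigr](X_i,t) \;+\; \bigl[v^h(X_i,t) - \tilde v_i(t)\bigr].
\end{equation*}
Setting $e_i(t) := X_i(t) - \tilde X_i(t)$, the derivative $\dot e_i$ equals this sum, so the plan is to estimate each term in the discrete $L^p_h(B_{R_0})$ norm and close via Gronwall. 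Lemmas~\ref{reg velocity field lemma} and~\ref{particlelemma} supply the regularity on $[0,T]\times B_{R_0+2}$ that underlies every estimate.

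The consistency term $v - v_\delta$ is handled by Taylor expanding $v(X_i - y,t)$ in $y$ up to order $m-1$, convolving against $\psi_\delta$, and invoking the vanishing-moment hypothesis (Assumption~\ref{mollifier assumption}(1)) to annihilate every polynomial term below order $m$; the integral remainder is controlled via the $C^m$ bound from Lemma~\ref{reg velocity field lemma}, yielding the $O(\delta^m)$ contribution. The quadrature term $v_\delta - v^h$ is the heart of the proof. After the Lagrangian change of variables $\alpha \mapsto X^t(\alpha)$, it is the error between $\int f(\alpha)\,d\alpha$ and its Riemann sum $h^d\sum_j f(jh)$ for $f(\alpha) := \grad K_\delta(X_i(t) - X^t(\alpha))\rho_0(\alpha)$, a function that is compactly supported in $B_{R_0-1}$ and of class $C^L$. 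A Poisson-summation quadrature estimate (absolutely convergent because $L\geq d+2$) bounds this error by $C h^L \|\partial^L f\|_{L^1}$. Each derivative falling on $\grad K_\delta = \grad K * \psi_\delta$ can be routed either onto $\grad K$, contributing $|x|^{s-k}$ growth that is integrable against $\psi_\delta$ only up to a threshold in $k$, or onto $\psi_\delta$, contributing $\delta^{-k}$ from scaling; splitting the convolution near and far from the origin and tracking the critical exponent is precisely what produces the trichotomy defining $G_{L+1}(\delta)$, with the logarithm appearing at the borderline $L+1 = s+d$.

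For the stability term $v^h(X) - v^h(\tilde X)$, the map $\eta \mapsto v^h(X+\eta)$ is Lipschitz from $L^p_h(B_{R_0})$ into itself, with constant controlled by $\|\grad^2 K_\delta\|$ and a discrete Young/convolution bound furnished by Proposition~\ref{discrete norm prop}(c,e). This produces an ODE inequality $\|\dot e(t)\|_{L^p_h} \leq C_\delta \|e(t)\|_{L^p_h} + R(\delta,h)$ with $R(\delta,h) = C(\delta^m + G_{L+1}(\delta) h^L)$, and Gronwall delivers the trajectory estimate. The technical condition \eqref{tech cond} is the bootstrap ingredient: it forces $\|e(t)\|_{L^\infty}\ll\delta$, ensuring that $\tilde X_j$ remains in the neighborhood of $X_j$ where the Lipschitz and Taylor arguments for $\grad K_\delta$ remain valid; a continuity-in-$t$ argument then extends the estimate to all of $[0,T]$. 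Once the trajectory bound is in hand, the velocity estimate follows by the same three-term decomposition applied at each $t$, with the extra stability contribution $v^h(X)-v^h(\tilde X)$ absorbed using the just-proved trajectory bound (this is why $G_{L+1}$ rather than $G_L$ appears, since one more derivative of $\grad K_\delta$ is consumed). The density estimate follows from the ODEs $\dot \rho_i = -(\grad\cdot v_i)\rho_i$ and $\dot{\tilde\rho}_i = -(\grad\cdot \tilde v_i)\tilde\rho_i$: integrating gives exponential representations, so $|\rho_i - \tilde\rho_i|$ is dominated by $\int_0^t|\grad\cdot v_i - \grad\cdot \tilde v_i|\,ds$, and passing to $W^{-1,p}_h$ via Proposition~\ref{discrete norm prop}(a,e) matches the rate for the velocity (this weaker norm is natural because $\rho = -\grad\cdot v$ in the Newtonian case). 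The main obstacle throughout is the quadrature estimate and the attendant bookkeeping that balances $h^L$ against the $\delta$-dependent growth of derivatives of $\grad K_\delta$ --- this is exactly what dictates the coupling $h^q < \delta$ in the hypotheses.
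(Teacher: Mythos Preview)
Your outline for the trajectory and velocity estimates is essentially the paper's: consistency (regularization $+$ quadrature) combined with velocity stability and Gronwall. The genuine gap is in the density argument.

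You propose to bound $|\rho_i - \tilde\rho_i|$ pointwise from the ODEs and then pass to $W^{-1,p}_h$ via Proposition~\ref{discrete norm prop}(a), which is just the trivial embedding $L^p_h \hookrightarrow W^{-1,p}_h$. That forces you to control $\|\nabla\cdot v - \nabla\cdot \tilde v\|_{L^p_h}$. Its stability piece, by the mean value theorem, involves $D^3 K_\delta$. In the most singular case $s = 1-d$, $D^2 K$ is Calder\'on--Zygmund (so velocity stability holds without loss) but $D^3 K$ is not locally integrable; the regularized-kernel estimate (Lemma~\ref{reg ker est lem}) only yields $\|D^3 K_\delta\|_{L^1} \leq C G_2(\delta) = C\delta^{-1}$. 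This produces an extra $\delta^{-1}$ in front of $\|e\|_{L^p_h}$, and after inserting the trajectory bound you get $\delta^{m-1}$, not $\delta^m$. Your sketch does not achieve the stated rate.

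The paper avoids this loss by estimating $\nabla\cdot v^h \rho - \nabla\cdot\tilde v\,\tilde\rho$ \emph{directly} in $W^{-1,p}_h$ (Proposition~\ref{stability density prop}). Two ingredients are essential and absent from your outline. First, the bootstrap threshold is $\|e\|_{L^\infty_h},\,\|f\|_{L^\infty_h} \leq \delta^2$ (not $\delta$), and $T^*$ tracks \emph{both} quantities; this makes the second-order Taylor remainder in the expansion of $\Delta K_\delta$ harmless. Second, the dangerous first-order term $a^{(1)}_i = \sum_j \nabla\Delta K_\delta(X_i - X_j)\,e_j\,\rho_{0j}\,h^d$ is rewritten as a Lagrangian gradient $Z_i^{-1}\nabla_\alpha F(ih)$ and then approximated by a finite-difference operator $\nabla^h_i$; Proposition~\ref{discrete norm prop}(e) states precisely that $\nabla^h_i: L^p_h \to W^{-1,p}_h$ is bounded independent of $h$. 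This transfers the offending derivative from $K_\delta$ onto the test function in the dual norm, recovering the velocity-level term $\|F\|_{L^p_h} = \|v^{(1)}\|_{L^p_h} \leq C\|e\|_{L^p_h}$ with no $\delta^{-1}$ loss. Your passing mention of part~(e) does not capture this mechanism, and without it the density estimate at the claimed rate does not close.

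A smaller point: $G_{L+1}$ in the velocity and density bounds arises from the consistency of $\Delta K_\delta$ (one more derivative than $\grad K_\delta$ in the quadrature), not from stability consuming an extra derivative as you write.
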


\begin{remark}[polynomial kernels]
By Remark \ref{mollifierAccuracy}, if $K(x)$ is a polynomial of order no more than $m$, $\grad K_\delta = \grad K$ and $\Delta K_\delta = \Delta K$. Thus, the error due to regularizing the kernel is zero, and the error of the blob method consists entirely of the discretization error. In this case, all error bounds in the theorem become $C h^L$.
\end{remark}

If $L = +\infty$, the following corollary shows that the blob method provides arbitrarily high order rates of convergence, depending on the accuracy of the mollifier.

\begin{corollary} \label{firstconvergencethm}
Suppose that Assumptions \ref{mollifier assumption}, \ref{kernel assumption}, and that \ref{exact sol reg as} hold for $m \geq 4$, $L=+\infty$, and $\frac{1}{2} <q<1$.
Then for $1\leq p<+\infty$ and $\delta = h^q$, there exists $h_0$ such that for all $0< h\leq h_0$,
\begin{align*}
\|X(t) - \tilde{X}(t)\|_{L^p_h(B_{R_0})} \leq Ch^{mq} \ , \quad \|\rho(t) - \tilde{\rho}(t)\|_{W^{-1,p}_h} \leq Ch^{mq} \ .
\end{align*}
\end{corollary}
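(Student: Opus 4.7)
The plan is to derive this corollary as a straightforward consequence of Theorem~\ref{convergence theorem} by choosing the regularity parameter $L$ sufficiently large. Since Assumption~\ref{mollifier assumption} is stated to hold for $L = +\infty$, we are free to apply the theorem with any finite $L \geq \max\{d+2,4\}$. The strategy is: (i) substitute $\delta = h^q$ into the theorem's error bounds and its technical condition, (ii) choose $L$ large enough so that the discretization term $G_{L+1}(\delta)h^L$ is dominated by the mollification term $\delta^m = h^{mq}$, and (iii) verify the technical hypothesis~(\ref{tech cond}) to ensure the theorem genuinely applies.

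First I would handle the error terms. With $\delta = h^q$ and $L > s+d$, the definition of $G_L$ gives
\begin{equation*}
G_L(\delta)\,h^L \;=\; \delta^{-(L-s-d)} h^L \;=\; h^{L(1-q) + q(s+d)},
\end{equation*}
and analogously for $G_{L+1}$. Because $q < 1$ strictly, the exponent $L(1-q)+q(s+d)$ grows without bound as $L \to \infty$, so I can fix $L$ large enough that
\begin{equation*}
L(1-q) + q(s+d-1) \;\geq\; mq,
\end{equation*}
which forces both $G_L(\delta)h^L$ and $G_{L+1}(\delta)h^L$ to be bounded by $h^{mq}$ for $h \leq 1$. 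Plugging into the theorem's bounds on $\|X(t)-\tilde X(t)\|_{L^p_h(B_{R_0})}$ and $\|\rho(t)-\tilde\rho(t)\|_{W^{-1,p}_h}$ then yields the required $Ch^{mq}$ estimates.

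Next I would verify the technical condition~(\ref{tech cond}), which with $\delta = h^q$ reads
\begin{equation*}
C(1+2d)\bigl(h^{mq} + G_{L+1}(h^q)\,h^L\bigr) \;<\; \tfrac{1}{2}\,h^{\,2q+1+\epsilon}.
\end{equation*}
For the first term, since $m \geq 4$ and $q > 1/2$ strictly, we have $(m-2)q > 1$, so I may pick $\epsilon > 0$ small enough that $mq > 2q + 1 + \epsilon$, making $h^{mq}$ strictly smaller than the right-hand side for all sufficiently small $h$. For the second term, enlarging $L$ if necessary so that $L(1-q) + q(s+d-1) > 2q + 1 + \epsilon$ (again possible because $1-q > 0$) gives the same conclusion. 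With $L$ and $\epsilon$ now fixed, there exists $h_0 > 0$ such that~(\ref{tech cond}) holds for all $0 < h \leq h_0$.

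Having fixed these parameters, invoking Theorem~\ref{convergence theorem} directly gives the two stated estimates. I do not expect any substantive obstacle here: all the analytic work is already in the theorem, and the role of the corollary is merely to bookkeep the exponents and confirm that $L$ can be taken as large as needed because Assumption~\ref{mollifier assumption} is available for $L = +\infty$. The only point requiring slight care is that the constant $C$ produced by the theorem depends on $L$, but since $L$ is chosen once at the outset (as a function of $m$, $q$, $s$, $d$) before letting $h \to 0$, this dependence is harmless and is absorbed into the final constant.
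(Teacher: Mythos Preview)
Your proposal is correct and follows essentially the same approach as the paper's proof: both substitute $\delta = h^q$, exploit $L = +\infty$ to choose $L$ large enough that $L(1-q)+q(s+d-1)$ exceeds both $2q+1+\epsilon$ and $mq$, and use $m\geq 4$, $q>\tfrac12$ to get $mq > 2q+1+\epsilon$ for some $\epsilon>0$. The only cosmetic difference is that you treat the error bounds first and the technical condition second, whereas the paper does the reverse; your closing remark about the $L$-dependence of the constant is a nice addition that the paper leaves implicit.
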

\begin{proof}
We first verify that condition (\ref{tech cond}) from Theorem \ref{convergence theorem} holds. Since $\frac{1}{2} < q< 1$ and $m \geq 4$, there exists $ \epsilon >0$ so that
\begin{align} \label{tech cond 1}
\frac{1}{2} + \frac{\epsilon}{2} < q \implies 2q+1 + \epsilon < 4q \implies  2q+1 + \epsilon < mq \ .
\end{align}
Likewise, since the mollifier satisfies Assumption \ref{mollifier assumption} for all $L$ and the kernel satisfies Assumption \ref{kernel assumption} for $s \geq 1-d$, we may choose $L$ large enough so that $L>s+d$ and
\begin{align} \label{tech cond 2}
 L-q(L +1-s -d) = L(1-q) + q(s + d -1) > 2q + 1  + \epsilon \ .
 \end{align}
Combining (\ref{tech cond 1}) with (\ref{tech cond 2}) shows that $\exists \ h_0$ so that, for all $0< h\leq h_0$, (\ref{tech cond}) holds.

Finally, choosing $L$ large enough so that
\[ L-q(L +1-s -d) = L(1-q) + q(s + d -1) > mq\ , \]
we conclude that for all $0 < h< h_0$,
 \begin{alignat*}{2}
\|X(t) - \tilde{X}(t)\|_{L^p_h(B_{R_0})} &\leq C (\delta^m + G_L(\delta)h^L ) &&\leq C h^{mq} \\ 
\|\rho(t) - \tilde{\rho}(t)\|_{W^{-1,p}_h} &\leq C(\delta^m +   G_{L+1}(\delta)h^L) &&\leq C h^{mq}.
\end{alignat*}
\end{proof}

The proof of Theorem \ref{convergence theorem} relies on the following propositions concerning the consistency and stability of the blob method. All constants depend on the exact solution, the kernel, the mollifier, the dimension, $T$, $R_0$, $q$, and $p \in (1, + \infty)$.
\begin{proposition}[consistency] \label{consistency prop}
For $0\leq t \leq T$ and $G_L(\delta)$ defined by (\ref{delta growth us}),
\begin{align*}
\|v(t) - v^h(t)\|_{L^\infty_h(B_{R_0})}  &\leq  C (\delta^m+  G_L(\delta)h^L) \ , \\
\|\grad \cdot v(t) \rho(t)- \grad \cdot v^h(t)\rho(t)\|_{L^\infty_h}  &\leq  C ( \delta^m+  G_{L+1}(\delta) h^L) \ .
\end{align*}
\end{proposition}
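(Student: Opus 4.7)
The plan is to decompose each error into a \emph{regularization} piece and a \emph{quadrature} piece, and then estimate the first by the moment conditions on $\psi$ and the second by a high-order trapezoidal bound. Introduce the partially regularized velocity
\[ v_\delta(x,t) = -\grad K_\delta * \rho(x,t) = -\int_\Rd \grad K_\delta(x - X^t(\alpha))\, \rho_0(\alpha)\, d\alpha,\]
where the second equality uses the Lagrangian form (\ref{velocityform}) together with conservation of mass from Lemma \ref{particlelemma}; since $\psi$ is even, equivalently $v_\delta = \psi_\delta * v$. I would then split
\[ v(x,t) - v^h(x,t) = \bigl[v(x,t) - v_\delta(x,t)\bigr] + \bigl[v_\delta(x,t) - v^h(x,t)\bigr]. \]

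For the moment term, the identity $v - v_\delta = v - \psi_\delta * v$, the vanishing moments of $\psi$ from Assumption \ref{mollifier assumption}, and the $C^m$ regularity of $v$ from Lemma \ref{reg velocity field lemma} yield, via Taylor expansion,
\[ \|v - v_\delta\|_{L^\infty(B_{R_0})} \leq C\delta^m \|v\|_{C^m}\int |y|^m |\psi(y)|\, dy, \]
which produces the $\delta^m$ contribution. The same argument applied to $\grad\cdot v$, which is $C^m$ by Lemma \ref{reg velocity field lemma}, gives the analogous $\delta^m$ bound for the divergence statement after multiplying by $\|\rho\|_\infty$.

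For the quadrature term, the integrand $f(\alpha) = \grad K_\delta(x-X^t(\alpha))\,\rho_0(\alpha)$ is compactly supported (since $\supp \rho_0 \subset B_{R_0-1}$) and $C^L$ in $\alpha$ by Lemma \ref{particlelemma} and the regularity of $\psi$. The difference $v_\delta - v^h$ is exactly the trapezoidal quadrature error for $\int f\,d\alpha$. Using Poisson summation (equivalently, Euler--Maclaurin) together with the $|\xi|^{-L}$ decay of $\hat f$, one obtains
\[ |v_\delta(x,t) - v^h(x,t)| \leq C h^L \sum_{|\gamma|=L} \|\partial^\gamma_\alpha f\|_{L^1}. \]
Expanding via the Leibniz and chain rules and using the uniform bounds on $\partial^\beta_\alpha X^t$ from Lemma \ref{particlelemma}, this reduces to estimating $\|\partial^\gamma \grad K_\delta\|_{L^\infty}$ for $|\gamma|\leq L$. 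Writing $\partial^\gamma \grad K_\delta = \grad K * \partial^\gamma \psi_\delta$ and splitting the convolution into a region near the origin, where Assumption \ref{kernel assumption} gives $|\grad K(y)| \leq C|y|^S$, and a far region, where one exploits the rapid decay of $\psi_\delta$, one extracts precisely the growth rate $G_L(\delta)$ of (\ref{delta growth us}); the three cases reflect whether $\int_{|y|\leq \delta}|y|^{S-L}\,dy$ is bounded, logarithmically divergent, or of power-law type.

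The divergence estimate is parallel, but the relevant kernel is now $\Delta K$, which by Assumption \ref{kernel assumption} is one order more singular than $\grad K$; consequently the derivative bound produces $G_{L+1}(\delta)$ in place of $G_L(\delta)$, and the multiplicative factor $\rho$ is absorbed by $\|\rho\|_{L^\infty(B_{R_0})}\leq C$. I expect the main obstacle to be the bookkeeping in the derivative bound on $\grad K_\delta$ and $\Delta K_\delta$: one must track carefully how the $L$ spatial derivatives redistribute between the singular kernel and the rescaled mollifier to recover the exact rates $G_L(\delta)$ and $G_{L+1}(\delta)$. The most delicate point is the Newtonian case $s=1-d$, where $\grad K$ is itself singular of order $|x|^{1-d}$ and one is forced to require $\psi \in C^\infty$ (i.e., $L=+\infty$) to compensate for the loss of integrability when too many derivatives land on $\grad K$ near the origin.
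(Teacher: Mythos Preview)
Your decomposition into regularization and quadrature pieces is exactly the paper's approach, and your treatment of the regularization term via moment conditions is essentially correct. There are, however, two gaps worth flagging.

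First, in the quadrature step you write that expanding $\partial^\gamma_\alpha f$ by Leibniz and chain rules ``reduces to estimating $\|\partial^\gamma \grad K_\delta\|_{L^\infty}$'', and that this produces $G_L(\delta)$. That is not correct: the pointwise bound on $\partial^\gamma \grad K_\delta$ near the origin is only $C\delta^{s-|\gamma|}$ (see the proof of Lemma~\ref{reg ker est lem}), so the $L^\infty$ route loses a factor $\delta^{-d}$ relative to $G_L(\delta)$. What actually yields $G_L(\delta)$ is the $L^1$ norm of $\partial^\gamma \grad K_\delta$ over the bounded set where $x - X^t(\alpha)$ ranges; the paper isolates this as Lemma~\ref{reg ker est lem}, placing $\rho_0$ and the $X^t$ derivatives in $L^\infty$ and the regularized-kernel derivatives in $L^1(B_R)$. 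Your integral $\int_{|y|\leq\delta}|y|^{S-L}\,dy$ is also not the right object (it diverges at the origin once $L\geq S+d$); the relevant integral is $\int_\delta^R r^{s+d-L-1}\,dr$, coming from the $L^1$ norm of $|x|^{s-L}$ away from the origin, and its three regimes are precisely the cases in $G_L(\delta)$.

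Second, in the moment bound you invoke a global $\|v\|_{C^m}$, but by Lemma~\ref{reg velocity field lemma} the derivatives of $v$ may grow like $|x|^{(S-|\beta|)_+}$ when $S>0$, so $\|v\|_{C^m}$ can be infinite. The paper's Proposition~\ref{moment prop} handles this by pairing the polynomial growth of $\partial^\beta v$ with the extra decay of $\psi$ required in Assumption~\ref{mollifier assumption} when $S>0$; you should invoke that decay rather than a global $C^m$ bound.
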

\begin{proposition}[stability of velocity] \label{stability velocity prop}
For $0 \leq t \leq T$, $1< p<+\infty$, \\ if $\|X(t) - \tilde{X}(t)\|_{L^\infty_h(B_{R_0})} \leq \delta$, then $\|v^h(t) - \tilde{v}(t)\|_{L^p_h(B_{R_0})} \leq C \|X(t) - \tilde{X}(t)\|_{L^p_h(B_{R_0})}$.
\end{proposition}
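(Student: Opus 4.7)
\medskip

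\noindent \textbf{Proof plan.} The plan is to linearize $\grad K_\delta$ by the fundamental theorem of calculus, factoring out $e_i := X_i(t) - \tilde X_i(t)$, and then split the resulting sum into a pointwise multiplier plus an off-diagonal integral operator. Writing
\[
\grad K_\delta(X_i - X_j) - \grad K_\delta(\tilde X_i - \tilde X_j) = \int_0^1 D^2 K_\delta\bigl(X_i - X_j - s(e_i - e_j)\bigr)(e_i - e_j)\,ds,
\]
and setting $M_{ij} := \int_0^1 D^2 K_\delta(X_i - X_j - s(e_i - e_j))\,ds$, I would decompose
\[
v^h_i - \tilde v_i = -e_i \sum_j M_{ij}\,{\rho_0}_j h^d + \sum_j M_{ij} e_j\,{\rho_0}_j h^d =: -e_i L_i + R_i.
\]
The hypothesis $\|X - \tilde X\|_{L^\infty_h(B_{R_0})} \leq \delta$ enters immediately: it forces $|s(e_i - e_j)| \leq 2\delta$, so the argument of $D^2 K_\delta$ in $M_{ij}$ stays within $2\delta$ of $X_i - X_j$, precisely at the regularization scale.

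For the diagonal factor $L_i$, I would use that $\sum_j D^2 K_\delta(X_i - X_j)\,{\rho_0}_j h^d$ is a particle quadrature for $(D^2 K_\delta * \rho)(X_i, t)$ (via Lemma \ref{particlelemma} and conservation of mass), which is uniformly bounded by Lemma \ref{reg velocity field lemma}, being essentially $-\grad v(X_i, t)$ up to an $O(\delta^m)$ mollification error. The perturbation introduced by the shift $s(e_i - e_j)$ in $M_{ij}$ costs at most $\|D^3 K_\delta\|_{L^\infty}$ times $2\delta$, and the scaling of Assumptions \ref{kernel assumption}--\ref{mollifier assumption} gives $\delta\|D^3 K_\delta\|_{L^\infty} = O(1)$. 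Thus $|L_i| \leq C$ uniformly in $i$ and $\delta$, and $\|e\,L\|_{L^p_h(B_{R_0})} \leq C\|e\|_{L^p_h(B_{R_0})}$.

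The off-diagonal term $R_i$ is the main obstacle and is the reason the statement restricts to $1<p<+\infty$. The plan is to regard $R_i$ as a Lagrangian Riemann sum for a continuous integral operator whose kernel is $D^2 K_\delta$ evaluated on a small $O(\delta)$ perturbation of $X_i - X^t(\beta)$. Changing variables $y = X^t(\beta)$ using $\rho_0(\beta)\,d\beta = \rho(y,t)\,dy$ realises this as convolution against $D^2 K_\delta$ (composed with the diffeomorphism $X^t$ and weighted by the bounded density $\rho$). For the Newtonian part of $K$, $D^2 K$ is a Calder\'on--Zygmund kernel, bounded on $L^p(\Rd)$ for $1<p<+\infty$; since $\psi_\delta$ is an approximate identity, $D^2 K_\delta = D^2 K * \psi_\delta$ inherits the same $L^p$ bound uniformly in $\delta$. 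For the non-singular components of $K$ allowed by Assumption \ref{kernel assumption} ($S_n > 1-d$), $D^2 K_\delta$ is in $L^1_{\loc}$ uniformly and the bound follows from Young's inequality.

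The hardest point is precisely the uniform-in-$\delta$ $L^p$ estimate in the Newtonian case: the naive bound $\sum_j |M_{ij}|\,{\rho_0}_j h^d \lesssim |\log\delta|$ blows up with $\delta$, and one must instead exploit the cancellation inherent in the Calder\'on--Zygmund structure of $D^2 K$. A secondary, more technical step is the comparison between the discrete sum $R_i$ and its continuous counterpart, which is absorbed into the constant via the quadrature-type estimates behind Proposition \ref{consistency prop} together with the scale relation $h^q < \delta$. Combining the bounds on $L_i$ and $R_i$ closes the proof.
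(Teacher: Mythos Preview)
Your decomposition and overall strategy match the paper's proof: both linearize via the mean value theorem, split into a diagonal term proportional to $e_i$ and an off-diagonal term carrying $e_j$, bound the diagonal factor in $L^\infty$, and control the off-diagonal operator by Calder\'on--Zygmund (for the Newtonian component) or Young's inequality (for the less singular components). The paper performs the linearization in two steps (first vary $X_j \to \tilde X_j$, then $X_i \to \tilde X_i$) rather than writing $e_i - e_j$ at once, but this is cosmetic.

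There is, however, a genuine gap in your treatment of the diagonal factor $L_i$. You claim that the $O(\delta)$ shift in the argument of $D^2 K_\delta$ costs at most $2\delta\,\|D^3 K_\delta\|_{L^\infty}$ and that Assumptions \ref{kernel assumption}--\ref{mollifier assumption} give $\delta\,\|D^3 K_\delta\|_{L^\infty} = O(1)$. This is false for singular kernels. The pointwise estimates behind Lemma \ref{reg ker est lem} give $|\partial^\beta \grad K_\delta(x)| \leq C\delta^{s-|\beta|}$ near the origin, so $\|D^3 K_\delta\|_{L^\infty} \gtrsim \delta^{s-2}$ and hence $\delta\,\|D^3 K_\delta\|_{L^\infty} \gtrsim \delta^{s-1}$. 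For the Newtonian potential ($s = 1-d$) this is $\delta^{-d}$, which blows up in every dimension. A pointwise bound on $D^3 K_\delta$ is simply too crude here.

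The paper avoids this by routing both terms through Lemma \ref{discreteconvolutionlemma}, which compares the perturbed discrete sum to a genuine convolution and controls the perturbation error by an $L^1$ bound on $\partial^\gamma \grad K_\delta$ with $|\gamma| = |\beta|+1$. That produces the factor $\delta\,G_2(\delta)$, and since $s \geq 1-d$ one always has $\delta\,G_2(\delta) \leq C$. Your ``secondary, more technical step'' for $R_i$ is in fact the same lemma, so once you replace the $L^\infty$ perturbation bound by this $L^1$ mechanism, your argument aligns with the paper's and goes through.
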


\begin{proposition}[stability of divergence of velocity] \label{stability density prop}
For $0 \leq t \leq T$, ${1< p<+\infty}$, if $\|X(t) - \tilde{X}(t)\|_{L^\infty_h(B_{R_0})} \leq \delta^2$ and $\|\rho(t) - \tilde{\rho}(t)\|_{L^\infty_h} \leq \delta^2$, then 
\[ \|\grad \cdot v^h( t)\rho( t) - \grad \cdot \tilde{v}(t)\tilde{\rho}(t) \|_{W^{-1,p}_h} \leq C \|X(t) - \tilde{X}(t)\|_{L^p_h(B_{R_0})} +  C \|\rho(t) - \tilde{\rho}(t)\|_{W^{-1,p}_h} \ . \]
\end{proposition}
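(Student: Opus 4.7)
My plan is to mimic the telescoping decomposition of Proposition \ref{stability velocity prop}, accounting for the fact that $\Delta K_\delta$ is one order more singular than $\grad K_\delta$, which forces us into the weaker $W^{-1,p}_h$ norm on the output. I decompose
\begin{align*}
\grad\cdot v^h(X_i)\rho_i - \grad\cdot\tilde v_i\tilde\rho_i &= A_i + B_i + C_i,\\
A_i &:= [\grad\cdot v^h(X_i)-\grad\cdot v^h(\tilde X_i)]\rho_i,\\
B_i &:= \grad\cdot v^h(\tilde X_i)(\rho_i-\tilde\rho_i),\\
C_i &:= [\grad\cdot v^h(\tilde X_i)-\grad\cdot\tilde v_i]\tilde\rho_i.
\end{align*}

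For $B_i$, the plan is to show the multiplier $g_i := \grad\cdot v^h(\tilde X_i)$ is uniformly bounded in $W^{1,\infty}_h$ on the relevant support, so that multiplication by $g$ is a bounded operator on $W^{-1,p}_h$ by duality and a discrete Leibniz rule, yielding $\|B\|_{W^{-1,p}_h}\le C\|\rho-\tilde\rho\|_{W^{-1,p}_h}$. The identification $\sum_j\Delta K_\delta(y-X_j)w_j\approx \Delta K_\delta*\rho(y)=\psi_\delta*(\grad\cdot v)(y)$, combined with Lemma \ref{reg velocity field lemma}, bounds $\|g\|_{L^\infty_h}$. For $\|D^+_l g\|_{L^\infty_h}$ I expand by the mean value theorem to produce $\sum_j\grad\Delta K_\delta(\cdots)w_j$, identified similarly with $\psi_\delta*\grad(\grad\cdot v)$ (bounded), multiplied by the discrete velocity $(\tilde X_{i+e_l}-\tilde X_i)/h$; the latter is uniformly bounded because the exact trajectories are Lipschitz by Lemma \ref{particlelemma} and $\delta^2/h=o(1)$ under the hypotheses $\delta>h^q$ with $q>\tfrac12$ and $\|X-\tilde X\|_{L^\infty_h}\le\delta^2$.

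For $A_i$ and $C_i$, the mean value theorem applied to $\Delta K_\delta$ produces, for $C_i$,
\[
C_i = \tilde\rho_i\sum_j\int_0^1\grad\Delta K_\delta(\tilde X_i-\tilde X_j-s\eta_j)\cdot\eta_j\,w_j\,ds,\qquad\eta_j=X_j-\tilde X_j,
\]
with an analogous formula for $A_i$ in which the common factor $X_i-\tilde X_i$ can be pulled outside the $j$-sum. A direct $L^p_h$ estimate would cost $\|\grad\Delta K_\delta\|_{L^1}=O(\delta^{-1})$, so instead I estimate in $W^{-1,p}_h$ by duality. Pairing against $\phi\in W^{1,p'}_h$ and swapping the $i,j$ sums produces the inner sum
\[
\Psi_j^s := \sum_i \phi_i\tilde\rho_i h^d\,\grad\Delta K_\delta(\tilde X_i-\tilde X_j-s\eta_j),
\]
a discrete quadrature for $\grad\Delta K_\delta*(\phi\tilde\rho)$ evaluated at $\tilde X_j+s\eta_j$. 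Using the factorizations $\grad\Delta K_\delta=\psi_\delta*\grad\Delta K$ and $\grad\Delta K*(\phi\tilde\rho)=\Delta K*\grad(\phi\tilde\rho)$, the product rule $\grad(\phi\tilde\rho)=\tilde\rho\grad\phi+\phi\grad\tilde\rho$, and $L^{p'}$-boundedness of convolution with $\Delta K$ (immediate for the Newtonian part where $\Delta K=\delta_0$, and elementary for the remaining parts under Assumption \ref{kernel assumption}), I obtain $\|\Psi^s\|_{L^{p'}_h}\le C\|\phi\|_{W^{1,p'}_h}$ uniformly in $\delta$. The $\delta^2$ hypothesis on $\|X-\tilde X\|_{L^\infty_h}$ controls the perturbation $-s\eta_j$ in the kernel argument at the mollifier scale, and the $\delta^2$ hypothesis on $\|\rho-\tilde\rho\|_{L^\infty_h}$ ensures $\tilde\rho$ inherits the uniform bounds of $\rho$ needed in the product rule. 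Hölder's inequality in $j$ then gives $\|C\|_{W^{-1,p}_h}\le C\|X-\tilde X\|_{L^p_h}$, and a parallel argument yields the same bound for $A$.

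The main obstacle is the $W^{-1,p}_h$ control of the singular sums in $A$ and $C$: the pointwise size $|\grad\Delta K_\delta|=O(\delta^{-(d+1)})$ makes any direct $L^p_h$ estimate too lossy. The essential mechanism is the factorization $\grad\Delta K_\delta=\psi_\delta*\grad\Delta K$: the mollifier makes the operator $L^1$-uniformly-bounded, and duality transfers the remaining gradient onto the test function, where the product rule absorbs it into the $W^{1,p'}_h$ norm of $\phi$ with no $\delta$-loss. The $\delta^2$ a priori bounds are precisely what validate the base-point substitutions and quadrature identifications at the mollifier scale.
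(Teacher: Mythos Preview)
Your overall architecture is right in spirit, but there are two genuine gaps that would make the argument fail as written.

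\textbf{The $\grad\tilde\rho$ problem.} In your treatment of $C_i$ you pair with $\phi$, swap sums, and then want to move the extra derivative off $\grad\Delta K_\delta$ via the product rule $\grad(\phi\tilde\rho)=\tilde\rho\grad\phi+\phi\grad\tilde\rho$. The second term requires a uniform bound on $\grad\tilde\rho$ (or its discrete analogue $D^+\tilde\rho$), and no such bound is available: the hypothesis $\|\rho-\tilde\rho\|_{L^\infty_h}\le\delta^2$ gives only $|D^+_l(\rho-\tilde\rho)|\le 2\delta^2/h$, and $\delta^2/h$ is \emph{not} bounded under $h^q<\delta\le\tfrac12$ (only a lower bound on $\delta$ is assumed). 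The same mistake appears in your $B_i$ analysis when you claim $(\tilde X_{i+e_l}-\tilde X_i)/h$ is bounded because ``$\delta^2/h=o(1)$''; the inequality $\delta>h^q$ gives a lower bound on $\delta^2/h$, not an upper bound. The paper avoids both issues by telescoping so that the \emph{smooth} weight $\rho_i$ (not $\tilde\rho_i$) multiplies the singular difference, and so that the multiplier in the $b$-term is $\grad\cdot v^h$ evaluated along the \emph{exact} trajectories $X_i$. The residual cross term $(\grad\cdot\tilde v-\grad\cdot v^h)(\rho-\tilde\rho)$ is then handled using the $\delta^2$ smallness of $\rho-\tilde\rho$ to absorb one lost power of $\delta$.

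\textbf{The mechanism for gaining a derivative.} Even with the right weight, your plan to ``integrate by parts'' after a quadrature identification treats the grid functions $\phi,\tilde\rho$ as if they were differentiable; making this rigorous on the deformed grid $\{\tilde X_i\}$ is exactly the hard part. The paper's device is different and worth noting: it uses a \emph{second}-order Taylor expansion of $\Delta K_\delta$ (not a first-order MVT), so the remainder terms carry $e_j^\gamma$ with $|\gamma|=2$ and hence an extra $\delta^2$ that cancels the loss from $D^4K_\delta$. The leading singular term $a^{(1)}_i=\sum_j\grad\Delta K_\delta(X_i-X_j)e_j\rho_{0j}h^d$ is then recognized as a Lagrangian gradient $Z_i^{-1}\grad_\alpha F(ih)$ of the less singular quantity $F(\alpha)=\sum_j\Delta K_\delta(X^t(\alpha)-X_j)e_j\rho_{0j}h^d$, replaced by a finite difference $\grad^h_i F$, and estimated via Proposition~\ref{discrete norm prop}(e): $\|\grad^h_i u\|_{W^{-1,p}_h}\le C\|u\|_{L^p_h}$. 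This is the discrete summation-by-parts you are reaching for, but it lands on $F$, which was already controlled in $L^p_h$ in the velocity-stability proof. Your duality approach could in principle be pushed through, but it would require exactly this finite-difference representation to make the ``transfer of derivative'' step rigorous at the discrete level.
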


We now show how Theorem \ref{convergence theorem} follows from these propositions.
\begin{proof}[Proof of Theorem \ref{convergence theorem}]
By Proposition \ref{discrete norm prop}, (c) and (d), it suffices to prove the result for $p$ sufficiently large. Let $T_0>0$ be small enough so that the quantities $(\tilde{X}, \tilde{v},  \grad \cdot \tilde{v},\tilde{\rho})$  exist for $t \in [0,T_0]$.
Define the particle error $e_i(t) = X_i(t) - \tilde{X}_i(t)$, the density error $f_i(t) = \rho_i(t) - \tilde{\rho}_i(t)$, and
\begin{align} \label{defofTstar}
T^*= \min \{ T, T_0, \inf\{ t: \|e(t)\|_{L^\infty_h(B_{R_0})} \geq \delta^2 \}, \inf\{ t: \|f(t)\|_{L^\infty_h(B_{R_0})} \geq \delta^2 \} \} \ .
\end{align}

To bound $e(t)$ and $f(t)$, we first bound their time derivatives and then apply Gronwall's inequality. Since the $L^p_h(B_{R_0})$ norm of $e(t)$ is a finite sum, we may pass the time derivative under the norm to obtain \begin{align} \label{conv pf 00}
 \frac{d}{dt}\|e(t)\|_{L^p_h(B_{R_0})}  \leq \left\|\frac{d}{dt} e(t) \right\|_{L^p_h(B_{R_0})} = \|v(t) - \tilde{v}(t)\|_{L^p_h(B_{R_0})}  \ .
 \end{align}
By Proposition \ref{discrete norm prop} (a) and the fact that bounded support of the density causes the $L^p_h$ norm below to be a finite sum,
\[ \lim_{h\to 0}  \left\| \frac{f(t+h) - f(t)}{h} - \frac{d}{dt} f(t) \right\|_{W^{-1,p}_h} \leq \lim_{h\to 0}  \left\| \frac{f(t+h) - f(t)}{h} - \frac{d}{dt} f(t) \right\|_{L^p_h} =0 \ . \]
Thus, by the reverse triangle inequality
 \begin{align}
 \label{conv pf 0}
 \frac{d}{dt}\|f(t)\|_{W^{-1,p}_h} &=\lim_{h\to 0} \frac{\|f(t+h)\|_{W^{-1,p}_h} - \|f(t)\|_{W^{-1,p}_h}}{h} \leq \lim_{h\to 0} \left\|\frac{f(t+h) - f(t)}{h}  \right\|_{W^{-1,p}_h} \nonumber \\
 &= \left\| \frac{d}{dt} f(t)\right\|_{W^{-1,p}_h}  = \|\grad \cdot v(t)\rho(t) - \grad \cdot \tilde{v}(t)\tilde{\rho}(t) \|_{W^{-1,p}_h}  \ . \end{align}
For $0 \leq t \leq T^*$, we combine the consistency estimates of Proposition \ref{consistency prop} with stability estimates of Propositions \ref{stability velocity prop} and \ref{stability density prop} to obtain for $0 \leq t \leq T^*$,
\begin{align} \label{conv pf 1}
\|v(t) - \tilde{v}(t)\|_{L^p_h(B_{R_0})} &\leq C_1( \|e\|_{L^p_h(B_{R_0})} + \delta^m+ G_{L}(\delta)h^L ) \ , \\
 \label{conv pf 2}  \|\grad \cdot v(t)\rho(t) - \grad \cdot \tilde{v}(t)\tilde{\rho}(t) \|_{W^{-1,p}_h} &\leq C_1 (\|e\|_{L^p_h(B_{R_0})} +  \|f\|_{W^{-1,p}_h} + \delta^m+  G_{L+1}(\delta)h^L) . \hspace{-2mm}
\end{align}
Applying Gronwall's inequality to (\ref{conv pf 00}) and (\ref{conv pf 1}), we conclude for $0 \leq t \leq T^* \leq T$,
\[ \|e\|_{L^p_h(B_{R_0})} \leq C_1Te^{C_1 T}(\delta^m + G_L(\delta)h^L) \ .  \]
Substituting this into (\ref{conv pf 1}) gives $\|v(t) - \tilde{v}(t)\|_{L^p_h(B_{R_0})} \leq C(\delta^m + G_L(\delta)h^L)$.\\
{Applying} Gronwall's inequality a second time to (\ref{conv pf 0}) and (\ref{conv pf 2}),
\[ \| f\|_{W^{-1,p}_h} \leq C_1Te^{C_1 T} (\|e\|_{L^p_h(B_{R_0})} + \delta^m + G_{L+1}(\delta)h^L) \leq C(\delta^m + G_{L+1}(\delta)h^L)\ . \]

We now show that, in fact, $T_0 = T^*$ and $T^* = T$, so the above inequalities hold on the interval $[0,T]$. First, by the bounded support of $f$ and Proposition \ref{discrete norm prop} (b),
\begin{align} \label{relation to uniform norm}
 \|e\|_{L^\infty_h(B_{R_0})} \leq h^{-d/p} \|e\|_{L^p_h(B_{R_0})}  \ \text{ and } \ 
 \|f\|_{L^\infty_h} \leq (1+2d)h^{-1-d/p} \|f\|_{W^{-1,p}_h}  \ . 
\end{align}
If $T_0 < T^*$, then at least one of the quantities $(\tilde{X}, \tilde{v},  \grad \cdot \tilde{v},\tilde{\rho})$ becomes unbounded at $t=T_0$. Both $\tilde{v}$ and $\grad \cdot \tilde{v}$ remain bounded as long as the approximate particle trajectories remain bounded, and both $\tilde{X}$ and $\tilde{\rho}$ must remain bounded on $[0,T^*]$ by the above inequalities. Thus, $T_0 = T^*$.

Now, we show $T^* = T$. Fix $\epsilon>0$ so that (\ref{tech cond}) holds.
 Let $p\geq 1$ be large enough so that $d/p < \epsilon$. Then,
\begin{alignat*}{2}
 \|e\|_{L^\infty_h(B_{R_0})} &< h^{-\epsilon}\|e\|_{L^p_h(B_{R_0})}  &&<\delta^2 h/2 < \delta^2/2  \ ,
  \\   \|f\|_{L^\infty_h} &<  (1+2d) h^{-1-\epsilon} \|f\|_{W^{-1,p}_h}  &&<\delta^2/2 \ .
\end{alignat*}
Thus, for all $t \in [0,T^*]$, $\|e(t)\|_{L^\infty_h(B_{R_0})},  \|f(t)\|_{L^\infty_h} < \delta^2/2$, so by (\ref{defofTstar}), $T^* = T$.
\end{proof}

To complete our proof of Theorem \ref{convergence theorem}, it remains to show Propositions \ref{consistency prop}, \ref{stability velocity prop}, and \ref{stability density prop}. We prove these in Sections \ref{consistency section} and \ref{stability section}. We conclude the current section with three lemmas that play an important role in the remaining estimates. The first lemma is a standard result estimating quadrature error.
\begin{lemma}[quadrature error] \label{trap quad}
Given $g \in C_c^l(\Rd)$, $l > d$,
\[ \left| \int_\Rd g(x) dx - \sum_{j \in \mathbb{Z}^d} g(jh)h^d \right| \leq C_{l,d} \|g\|_{W^{l,1}(\Rd)} h^l \ . \]
\end{lemma}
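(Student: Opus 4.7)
The plan is to apply the Poisson summation formula to rewrite the quadrature error as a lattice sum of Fourier coefficients of $g$, and then to exploit the regularity of $g$ to obtain sufficient decay of those coefficients.

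First, I would rescale by setting $G(x) = g(hx)$. A change of variable in the Fourier transform gives $\hat G(k) = h^{-d}\hat g(k/h)$, and combining with the Poisson summation identity $\sum_{j \in \mathbb{Z}^d} G(j) = \sum_{k \in \mathbb{Z}^d} \hat G(k)$ (which applies because $g \in C^l_c(\Rd)$ with $l > d$ makes both sides absolutely convergent) rearranges to
\[
h^d \sum_{j \in \mathbb{Z}^d} g(jh) - \int_\Rd g(x)\,dx \;=\; \sum_{k \in \mathbb{Z}^d \setminus \{0\}} \hat g(k/h).
\]
This reduces the problem to a single lattice sum of values of $\hat g$.

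Next I would use integration by parts to obtain decay of $\hat g$. For any multi-index $\alpha$ with $|\alpha| \leq l$, the compact support of $g$ yields $|\xi^\alpha \hat g(\xi)| \leq C \|\partial^\alpha g\|_{L^1}$, so summing over $|\alpha| \leq l$ and using $(1+|\xi|)^l \leq C_{l,d} \sum_{|\alpha| \leq l} |\xi^\alpha|$ produces
\[
|\hat g(\xi)| \;\leq\; \frac{C_{l,d}\,\|g\|_{W^{l,1}(\Rd)}}{(1+|\xi|)^l}.
\]
Substituting $\xi = k/h$ and summing over $k \neq 0$,
\[
\Bigl|\sum_{k \ne 0} \hat g(k/h)\Bigr| \;\leq\; C\,\|g\|_{W^{l,1}(\Rd)} \sum_{k \ne 0} \frac{h^l}{(h+|k|)^l} \;\leq\; C_{l,d}\,\|g\|_{W^{l,1}(\Rd)}\, h^l,
\]
since $(h+|k|)^{-l} \leq |k|^{-l}$ and $\sum_{k \neq 0} |k|^{-l}$ converges precisely because $l > d$.

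I do not expect a genuine obstacle here: the argument is textbook. The only mild technicality is justifying Poisson summation, but compact support of $g$ together with the decay bound on $\hat g$ sketched above makes this routine. Since the paper uses this lemma only as a standard quadrature tool, in practice a citation (for instance to Majda--Bertozzi \cite{MajdaBertozzi}) would likely stand in for the self-contained proof above.
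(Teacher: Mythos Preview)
Your proof is correct and is the standard Poisson summation argument; the paper itself does not prove the lemma but simply cites Anderson and Greengard \cite[Lemma 2.2]{AndersonGreengard}, exactly as you anticipated in your final paragraph. Your self-contained argument is thus more detailed than what appears in the paper, and matches the approach in the cited reference.
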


\begin{proof}
See Anderson and Greengard \cite[Lemma 2.2]{AndersonGreengard}.
\end{proof}

Next, we quantify the regularity of $\grad K_\delta$. 
\begin{lemma}[regularity of $\grad K_\delta$ and $\Delta K_\delta$] \label{regofgradKdelta}
$\grad K_\delta$ and $\Delta K_\delta$ belong to $C^L(\Rd)$, and 
$\partial^\beta \grad K_\delta(x) = {\grad K * \partial^\beta \psi_\delta(x)}$ for all $|\beta| \leq L$.
\end{lemma}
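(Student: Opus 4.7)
The plan is to prove both assertions by transferring derivatives from $\grad K_\delta = \grad K * \psi_\delta$ onto the smooth mollifier via a differentiation-under-the-integral argument. Since $K = \sum_{n=1}^N K_n$, it suffices to treat each summand $\grad K_n * \psi_\delta$ separately, so I would fix $n$ and abbreviate $S=S_n$. The two regimes to keep in mind are the singularity at the origin, where $|\grad K_n(y)| \leq C|y|^{S}$ with $S \geq 1-d$, and the polynomial growth at infinity, where the same pointwise inequality governs large $|y|$.

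I would establish the identity $\partial^\beta(\grad K *\psi_\delta) = \grad K *\partial^\beta\psi_\delta$ by induction on $|\beta|\leq L$, with the base case $|\beta|=0$ being the definition. For the inductive step, applying the fundamental theorem of calculus to the smooth factor and then Fubini gives
\begin{align*}
\grad K *\partial^\beta\psi_\delta(x+h e_j) - \grad K *\partial^\beta\psi_\delta(x) = \int_0^h\!\int_\Rd \grad K(y)\,\partial^{\beta+e_j}\psi_\delta(x+s e_j-y)\,dy\,ds.
\end{align*}
Dividing by $h$, sending $h \to 0$, and using continuity of $x\mapsto \grad K*\partial^{\beta+e_j}\psi_\delta(x)$ identifies the $x_j$-partial of the left-hand side with $\grad K*\partial^{\beta+e_j}\psi_\delta(x)$, closing the induction.

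The Fubini step and the continuity both reduce to the same technical point, which I expect to be the main obstacle: producing, for $x$ in a fixed compact set and $|\gamma|\leq L$, a locally uniform integrable majorant of $y\mapsto|\grad K_n(y)|\,|\partial^\gamma\psi_\delta(x-y)|$. I would split at $|y|=1$. On $|y|\leq 1$, Assumption \ref{kernel assumption} gives $|\grad K_n(y)|\leq C|y|^{S}$, which is locally integrable because $S+d\geq 1>0$, while $|\partial^\gamma\psi_\delta|$ is bounded uniformly in $x$. On $|y|>1$, $|x-y|$ is comparable to $|y|$, and the decay portion of Assumption \ref{mollifier assumption} dominates the $|y|^S$ growth of $|\grad K_n|$: the basic decay $|\partial^\gamma \psi(z)|\leq C|z|^{-d-|\gamma|}$ suffices when $S\leq 0$, while the enhanced decay $|\partial^\gamma \psi(z)|\leq C|z|^{-d-S-\epsilon}$ handles the case $S>0$; either way the resulting integrand decays strictly faster than $|y|^{-d}$ at infinity, and dominated convergence then legitimises every exchange of limit and integral, yielding continuity of $\partial^\beta\grad K_\delta$ in $x$ and hence the $C^L$ regularity.

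The same scheme applies to $\Delta K_\delta$, using the bound $|\Delta K_n(y)|\leq C|y|^{S-1}$ implied by Assumption \ref{kernel assumption} in place of the bound on $\grad K_n$. Local integrability then demands $S>1-d$, which is automatic except in the Newtonian case $S=1-d$; but there Assumption \ref{kernel assumption} forces $K_n$ to be a constant multiple of the Newtonian potential, so $\Delta K_n=c\delta_0$ as a distribution and $\Delta K_\delta=c\psi_\delta\in C^L$ by hypothesis on $\psi$ (in fact $C^\infty$, since Assumption \ref{mollifier assumption} then requires $L=+\infty$), completing the plan.
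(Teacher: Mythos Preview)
Your approach is essentially the paper's: reduce to a single summand $K_n$, treat the Newtonian $\Delta K$ case separately via $\Delta K*\psi_\delta=c\psi_\delta$, and justify differentiation under the integral by building a locally uniform $L^1$ majorant from the near-origin integrability of $|y|^{S}$ and the far-field decay of $\partial^\gamma\psi$. The paper packages the last step with the mean value theorem rather than your FTC/Fubini induction, but the substance is identical.

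One small slip in your far-field estimate: the regularity bound $|\partial^\gamma\psi(z)|\leq C|z|^{-d-|\gamma|}$ alone does \emph{not} give decay strictly faster than $|y|^{-d}$ in the borderline case $S_n=0$, $|\gamma|=0$ (the product is then only $\sim|y|^{-d}$, which is not integrable at infinity). The paper covers this by also invoking the separate decay hypothesis $|\psi(z)|\leq C|z|^{-d-\epsilon}$ from Assumption~\ref{mollifier assumption}, combining it with the regularity bound to get $|\partial^\gamma\psi(z)|\leq C|z|^{-d-\epsilon}$ uniformly in $|\gamma|\leq L$ whenever $S_n\leq 0$. With that amendment your argument goes through.
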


The third lemma provides pointwise and $L^1$ estimates on $\grad K_\delta$. For the $L^1$ estimates, we allow an error term $g(x)$ of order $\delta$.
\begin{lemma}[regularized kernel estimates]\label{reg ker est lem}
Define $G_L(\delta)$ as in equation (\ref{delta growth us}), and fix $C'>0$. For $|\beta| \leq L$, $|g(x)| \leq C' \delta$, and $R >0$, there exists $C>0$ depending on the kernel, mollifier, dimension, $\beta$, $R$, and $C'$ so that 
\begin{align*}
\|\partial^\beta \grad K_\delta(x + g(x))\|_{L^1(B_R)} \leq C G_{|\beta|}(\delta) \ .
\end{align*}
\end{lemma}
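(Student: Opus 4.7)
My plan is to reduce to a pointwise bound on $\partial^\beta \grad K_{n,\delta}$ and then integrate. By Lemma \ref{regofgradKdelta} and linearity of convolution, $\partial^\beta \grad K_\delta = \sum_{n} \partial^\beta \grad K_{n,\delta}$, so it suffices to estimate each summand in $L^1(B_R)$ and add; the kernel with the smallest exponent $S_n = s$ will then dominate and produce the stated $G_{|\beta|}(\delta)$.

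The key step will be the pointwise estimate
\[ |\partial^\beta \grad K_{n,\delta}(y)| \leq C \max(|y|,\delta)^{S_n - |\beta|} \]
for $y$ in any bounded ball. I would start from the rescaled identity
\[ \partial^\beta \grad K_{n,\delta}(y) = \delta^{-|\beta|}\!\int \grad K_n(y - \delta w)(\partial^\beta \psi)(w)\,dw, \]
which comes from $\partial^\beta \psi_\delta(u) = \delta^{-d-|\beta|}(\partial^\beta\psi)(u/\delta)$ via a change of variables, and analyze it in two regimes. For $|y| \leq 2\delta$, splitting the $w$-integral at $|w| = 3$ and using $|\grad K_n(u)| \leq C|u|^{S_n}$ together with $S_n \geq 1-d > -d$ (so that $|u|^{S_n}$ is locally integrable near the shifted singularity $w = y/\delta$) yields $\leq C\delta^{S_n - |\beta|}$. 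For $|y| \geq 2\delta$, I would partition $w$-space into the three regions $|\delta w| \leq |y|/2$, $|\delta w| \geq 2|y|$, and the intermediate annulus, on which respectively $|y - \delta w| \asymp |y|$, $|y - \delta w| \asymp \delta|w|$, and a substitution $u = \delta w - y$ reduces the residual integral to $\int_{B_{4|y|}} |u|^{S_n} du \cdot \delta^{-d} \leq C|y|^{S_n + d}\delta^{-d}$. In each region, the decay of the mollifier (the basic rate $|\partial^\beta \psi(w)| \leq C|w|^{-d-|\beta|}$, or its strengthened version $|w|^{-d-S-\epsilon}$ when $S > 0$, which is needed to cover the sub-case $|\beta| \leq S_n$) collapses the contribution to $\leq C|y|^{S_n - |\beta|}$ after the $\delta^{-|\beta|}$ prefactor.

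Given the pointwise bound, the $L^1$ estimate follows by splitting $B_R$ at radius $C_0 \delta := 2(C'+1)\delta$. On $B_R \cap B_{C_0\delta}$, the pointwise bound is at most $C\delta^{S_n - |\beta|}$ (when $S_n \leq |\beta|$; bounded otherwise) and the set has measure $\leq C\delta^d$, so its contribution is $\leq C\delta^{S_n + d - |\beta|}$. On $B_R \setminus B_{C_0\delta}$, the choice of $C_0$ forces $|x + g(x)| \geq |x| - C'\delta \geq |x|/2$, so $\max(|x+g(x)|,\delta)^{S_n-|\beta|} \leq C|x|^{S_n - |\beta|}$; integrating in polar coordinates from $C_0 \delta$ to $R$ yields $O(1)$, $O(|\log\delta|)$, or $O(\delta^{-(|\beta|-S_n-d)})$ according to whether $|\beta|$ is less than, equal to, or greater than $S_n + d$. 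Summing the two regions gives $\leq C G_{|\beta|}(\delta; S_n)$, and summing over $n$ selects the worst exponent $s$. The hard part will be the three-region pointwise bound for $|y| \geq 2\delta$: in the intermediate annulus the decay of $\partial^\beta \psi$ has to balance the singularity $|y - \delta w|^{S_n}$ precisely, and Assumption \ref{mollifier assumption}, item 3, including its strengthened form when $S > 0$, is calibrated exactly for this matching.
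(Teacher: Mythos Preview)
Your overall architecture matches the paper's: reduce to a single $K_n$, establish the pointwise bound $|\partial^\beta\nabla K_{n,\delta}(y)|\leq C\max(|y|,\delta)^{S_n-|\beta|}$, then split $B_R$ at radius $\sim\delta$ and integrate radially. The final $L^1$ step is essentially identical to the paper's.

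The gap is in your pointwise bound for $|y|\geq 2\delta$, specifically in the region $|\delta w|\leq|y|/2$. There you correctly note $|\nabla K_n(y-\delta w)|\leq C|y|^{S_n}$, but the mollifier decay $|\partial^\beta\psi(w)|\leq C|w|^{-d-|\beta|}$ cannot do the work you claim. That region of integration contains $\{|w|\leq 1\}$, where the decay bound is useless; one only has $|\partial^\beta\psi|\leq C$ there. Hence
\[
\delta^{-|\beta|}\int_{|w|\leq|y|/(2\delta)}|\nabla K_n(y-\delta w)|\,|\partial^\beta\psi(w)|\,dw
\ \leq\ C\,\delta^{-|\beta|}|y|^{S_n}\!\int_{\Rd}|\partial^\beta\psi(w)|\,dw
\ \leq\ C\,\delta^{-|\beta|}|y|^{S_n},
\]
which for $|y|\gg\delta$ is strictly worse than the required $|y|^{S_n-|\beta|}$ (indeed by a factor $(|y|/\delta)^{|\beta|}$). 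Neither the basic rate nor the strengthened $|w|^{-d-S-\epsilon}$ rescues this: the obstruction is the unit ball in $w$, not the tail.

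What is missing is a mechanism to transfer the $\beta$ derivatives from $\psi_\delta$ onto $\nabla K_n$ in the region away from the kernel singularity. The paper does this by introducing a smooth cutoff $\phi_x(y)=\phi_0(|y|/|x|)$, writing $\partial^\beta\nabla K_\delta(x)=I_1+I_2$, and integrating $I_1$ by parts to obtain $\int\partial^\beta_y(\nabla K(y)\phi_x(y))\psi_\delta(x-y)\,dy$, which is bounded by $C|x|^{s-|\beta|}\|\psi_\delta\|_{L^1}$ directly. Your intermediate annulus and your outer region $|\delta w|\geq 2|y|$ are fine as stated (there the tail decay of $\partial^\beta\psi$ genuinely provides the needed $(\delta/|y|)^{|\beta|}$), but for the inner region you must either mimic the paper's integration by parts or, equivalently, Taylor-expand $\nabla K_n(y-\delta w)$ in $\delta w$ and use the moment cancellations $\int w^\gamma\partial^\beta\psi(w)\,dw=0$ for $|\gamma|<|\beta|$.
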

See appendix Section \ref{reg kernel estimates} for the proof of Lemmas \ref{regofgradKdelta} and \ref{reg ker est lem}.

\subsection{Consistency} \label{consistency section}
To prove the consistency estimate of Proposition \ref{consistency prop}, we decompose the consistency error into a regularization error, due to the convolution with a mollifier, and a discretization error, due to the quadrature of the integral:
\begin{align} \label{consistencybreakup}
&|v(x,t) - v^h(x,t)|  \\
&\quad \leq |\grad K*\rho(x,t) - \grad K_\delta*\rho(x,t) |  + \left|\grad K_\delta*\rho(x,t) - \sum \grad K_\delta(x-X_j(t)){\rho_0}_j h^d \right|  \ ,\nonumber\\
&|\grad \cdot v(x,t) - \grad \cdot v^h(x,t)| \nonumber\\
&\quad \leq |\Delta K*\rho(x,t) - \Delta K_\delta*\rho(x,t) | + \left|\Delta K_\delta*\rho(x,t) - \sum \Delta K_\delta(x-X_j(t)){\rho_0}_j h^d \right| \nonumber\ . 
\end{align}
First, we bound the moment error.
\begin{proposition}[regularization error] \label{moment prop}
Fix $R>1$. Under the hypotheses of Theorem \ref{convergence theorem}, for $|x| < R$ and $0 \leq t \leq T$, 
\begin{align*}
 \left| \grad K* \rho(x,t) - \grad K_\delta*\rho(x,t) \right|   \leq C \delta^m \ , \quad 
  \left|\Delta K*\rho(x,t) - \Delta K_\delta* \rho(x,t) \right|  \leq C\delta^m
\end{align*}
The constant depends on the exact solution, the kernel, the mollifier, the dimension, $T$, $R_0$, and $R$.
\end{proposition}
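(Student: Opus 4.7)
The plan is to rewrite the regularization error as a convolution against the mollification error of the density, and then combine moment-based cancellation with a careful separation of near- and far-field contributions. Using $\grad K_\delta = \grad K * \psi_\delta$ and Fubini,
\[
\grad K * \rho(x,t) - \grad K_\delta * \rho(x,t) = \grad K * E(x,t), \quad E(y,t) := \rho(y,t) - (\psi_\delta * \rho)(y,t).
\]
The heart of the proof is to produce two pointwise bounds on $E$: uniform smallness $|E(y,t)| \leq C \delta^m$ where $\rho$ is smooth, and a matching rate of spatial decay outside $\supp \rho$.

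For the first bound, in the region where $\rho \in C^m$ (which holds by Assumption \ref{exact sol reg as} for the parameter $r$), I would Taylor expand $\rho(y-w,t)$ about $y$ up to order $m-1$. The moment-vanishing conditions from Assumption \ref{mollifier assumption}(1) cancel the terms with $1\leq |\gamma|\leq m-1$ after integration against $\psi_\delta$, so only the order-$m$ Taylor remainder survives, bounded by $C\|\rho\|_{C^m}\delta^m$ after using the finite $m$-th moment $\int|w|^m|\psi(w)|\,dw$. For the second bound, if $|y|\geq 2R_0$, the compact support of $\rho$ forces the integration variable $w$ in $E(y,t) = -\int \psi_\delta(y-z)\rho(z,t)\,dz$ to satisfy $|w|\geq |y|/2$, and a Chebyshev-type estimate using the same finite $m$-th moment yields
\[
|E(y,t)| \leq \|\rho\|_\infty \int_{|w|\geq |y|/2} |\psi_\delta(w)|\,dw \leq C \delta^m / |y|^m.
\]

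Finally I would estimate $|\grad K * E(x,t)|$ for $|x|<R$ by splitting the $y$-integral at $|y|=2R_0$. On the bounded region, the uniform bound $|E|\leq C\delta^m$ combined with local integrability of $\grad K$ (guaranteed by the exponent $s\geq 1-d > -d$ in Assumption \ref{kernel assumption}) gives a contribution bounded by $C\delta^m$ uniformly in $x \in B_R$. On the tail, combining the $|y|^{-m}$ decay of $E$ with the growth estimate $|\grad K(x-y)|\leq C(1+|x-y|^S)$ from Assumption \ref{kernel assumption}, refined by the enhanced decay of $\psi$ in the case $S>0$ via Assumption \ref{mollifier assumption}(3), produces a convergent integral also bounded by $C\delta^m$. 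The estimate for $\Delta K$ follows by the identical strategy with $\grad K$ replaced by $\Delta K$ throughout and the singularity/growth exponents shifted by one; Assumption \ref{kernel assumption} provides the analogous pointwise bounds for the second derivative.

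The principal technical obstacle is reconciling the possible singularity of $\grad K$ at the origin, which controls local integrability, with its possible growth at infinity, which controls tail integrability, while maintaining a sharp $\delta^m$ rate. The finite $m$-th moment of $\psi$, its pointwise decay, and the enhanced decay for the case $S>0$ are precisely the ingredients in Assumption \ref{mollifier assumption} that balance these constraints and recover the desired rate.
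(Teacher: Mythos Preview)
Your decomposition $\grad K*\rho-\grad K_\delta*\rho=\grad K*E$ with $E=\rho-\psi_\delta*\rho$ is correct, but the paper takes the dual viewpoint: since $\grad K*E=f-f*\psi_\delta$ with $f:=\grad K*\rho$, it Taylor--expands $f$ rather than $\rho$, then applies the moment conditions directly to $\int \psi_\delta(y)\,f(x-y)\,dy$. The point is that Lemma~\ref{reg velocity field lemma} guarantees $f\in C^m(\Rd)$ with $|\partial^\beta f(x)|\le C(1+|x|^{(S-m)_+})$ for $|\beta|=m$, and this growth bound is exactly what is needed to control the Taylor remainder against $|y|^m|\psi_\delta(y)|$ (and, when $S>m$, against $|y|^S|\psi_\delta(y)|$, using the enhanced decay of $\psi$).

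Your route has a genuine gap at the first step: you assert $\rho\in C^m$, but Assumption~\ref{exact sol reg as} only gives $\rho\in C^r$ with $r\ge\max\{m-(s+d-2),L\}$. When $s>2-d$ and $m>L$ (e.g.\ $d=1$, $s=3$, $m=6$, $L=4$, $r=4$), one can have $r<m$, so the order--$m$ Taylor expansion of $\rho$ is unavailable and $|E|\le C\delta^m$ fails. The paper's approach sidesteps this because convolution with $\grad K$ gains $s+d-1$ derivatives when $s$ is large, which is precisely the content of Lemma~\ref{reg velocity field lemma}; Taylor--expanding $f$ instead of $\rho$ uses this smoothing. Your far--field sketch is also incomplete: the bound $|E(y)|\le C\delta^m|y|^{-m}$ combined with $|\grad K(x-y)|\le C(1+|y|^S)$ yields a convergent tail only when $m>S+d$, which is not assumed, and the enhanced decay of $\psi$ in Assumption~\ref{mollifier assumption}(3) does not by itself recover the $\delta^m$ rate with the needed $|y|$--decay. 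Both difficulties disappear once you rewrite $\grad K*E=f-f*\psi_\delta$ and expand $f$; the tail integral then carries the rapidly decaying weight $\psi_\delta$ rather than the growing kernel $\grad K$.
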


\begin{proof}
It is a standard result (see for example Ying and Zhang \cite[Lemma 3.2.6]{YingZhang}) that if a mollifier $\psi$ is accurate of order $m$ and $f \in C^m(\Rd)$ has bounded derivatives, $|f(x) - f* \psi_\delta(x)| \leq C \delta^m$ for all $x \in \Rd$. Under our assumptions on $\psi$, the result continues to hold for $|x|< R$ if we merely require $f \in C^m(\Rd)$ and 
\begin{align} \label{reg pf 1}
 |\partial^\beta f(x)| \leq C (1+|x|^{(S-m)_+}) \ , \quad |\beta| = m \ . 
 \end{align}

This follows from Taylor's formula with integral remainder,
\[ f(x-y)  \leq f(x)+ \sum_{|\gamma|=1}^{m-1}  \frac{(-1)^{|\gamma|} y^\gamma}{\gamma!}\partial^\gamma f(x) + m\sum_{|\beta| = m} \frac{(-1)^m y^\beta}{\beta!} \int_0^1 (1-t)^{m-1}\partial^\beta f(x-ty)dt  \ . \]
Inequality (\ref{reg pf 1}) implies for $|x|<R$, $0<t<1$,
\[|\partial^\beta f(x-ty) | \leq C(1+ |x-ty|^{(S - m)_+}) \leq C(1+(|x| + t|y|)^{(S-m)_+})\leq C_R(1+|y|^{(S-m)_+}) \ . \]
Thus,
 \[ \left| \int_0^1 (1-t)^{m-1}\partial^\beta f(x-ty)dt  \right| \leq C_{R}(1+|y|^{(S -m)_+} )\ . \]
By Assumption \ref{mollifier assumption} on the $m$th order accuracy of $\psi$,
\[\int y^\gamma \psi_\delta(y) dy = \delta^{-d} \int y^\gamma \psi(y/\delta) dy = \delta^{|\gamma|} \int y^\gamma \psi(y) dy =  \begin{cases} \delta^{|\gamma|} &\text{ if } |\gamma| =0 \ , \\0 &\text{ if } |\gamma| \in [1, m-1] \ . \end{cases} \]
and $\int |y|^m |\psi_\delta(y)| dy < C \delta^m$.
We also have that ${\int |y|^{S} |\psi_\delta(y)|dy < C \delta^S}$ when $S > 0$. If $S> m$, $\delta^S < \delta^m$. Thus, integrating Taylor's formula against $\psi_\delta(y)$ gives
\[ |f*\psi_\delta(x) - f(x)|\leq  C \delta^m   \ . \]

It remains to show that (\ref{reg pf 1}) holds for $f = \grad K *\rho$ and $f = \Delta K *\rho$. This follows from Lemma \ref{reg velocity field lemma}.
\end{proof}

\begin{proposition}[discretization error] \label{discretization prop}
Fix $R>1$. Under the hypotheses of Theorem \ref{convergence theorem}, with $G_L(\delta)$ defined by (\ref{delta growth us}), for $|x| < R$  and $0 \leq t \leq T$,  
\begin{align*}
\left|\int \grad K_\delta(x-y) \rho(y,t) dy - \sum \grad K_\delta(x-X_j(t)){\rho_0}_j h^d \right|  &\leq  C G_L(\delta) h^L \\
\left|\int \Delta K_\delta(x-y) \rho(y,t) dy - \sum \Delta K_\delta(x-X_j(t)){\rho_0}_j h^d \right| &\leq  CG_{L+1}(\delta) h^L
\end{align*}
The constant depends on the exact solution, the kernel, the mollifier, the dimension, $T$, $R_0$, and $R$.
\end{proposition}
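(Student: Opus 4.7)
The plan is to reduce both inequalities to a single trapezoidal-rule estimate applied to an integrand written in Lagrangian coordinates. Using the conservation-of-mass identity $\rho(X^t(\alpha),t)|J^t(\alpha)| = \rho_0(\alpha)$ from Lemma \ref{particlelemma} and the substitution $y = X^t(\alpha)$ on the support of $\rho(\cdot,t)$, I rewrite
\[ \int_{\Rd} \grad K_\delta(x-y)\,\rho(y,t)\,dy = \int_{\Rd} \grad K_\delta(x - X^t(\alpha))\,\rho_0(\alpha)\,d\alpha. \]
Setting $g(\alpha) := \grad K_\delta(x-X^t(\alpha))\,\rho_0(\alpha)$, the discrete sum is exactly $\sum_j g(jh)\,h^d$, so the left-hand side of the first inequality is precisely the trapezoidal quadrature error of $g$. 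The second inequality reduces identically with $\Delta K_\delta$ in place of $\grad K_\delta$.

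Next, I apply Lemma \ref{trap quad} with $l = L$ (permissible since $L \ge d+2 > d$), which bounds each quadrature error by $C\,\|g\|_{W^{L,1}(\Rd)}\,h^L$. Because $\supp \rho_0 \subseteq B_{R_0-1}$, both $g$ and every one of its $\alpha$-derivatives are supported in $B_{R_0-1}$, so the $W^{L,1}$ norm reduces to a finite sum of integrals over $B_{R_0-1}$.

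To control $\|g\|_{W^{L,1}}$ uniformly in $x \in B_R$ and $t \in [0,T]$, I expand $\partial^\beta g$ via the Leibniz rule and the Faa di Bruno chain rule. Each resulting term has the form
\[ C_{\beta,\gamma,\beta_2}\,\bigl(\partial^\gamma \grad K_\delta\bigr)(x - X^t(\alpha))\; P\bigl(\grad_\alpha^{\le|\beta|} X^t(\alpha)\bigr)\; \partial^{\beta_2}\rho_0(\alpha), \]
with $|\gamma|\le |\beta|-|\beta_2|$ and $P$ a polynomial in derivatives of $X^t$ of order at most $|\beta|$. Lemma \ref{particlelemma} supplies uniform bounds on derivatives of $X^t$ on $B_{R_0+2} \supset B_{R_0-1}$, and Assumption \ref{exact sol reg as} (which gives $r \ge L$) supplies uniform bounds on derivatives of $\rho_0$. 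Hence it suffices to estimate $\int_{B_{R_0-1}} |(\partial^\gamma \grad K_\delta)(x - X^t(\alpha))|\,d\alpha$ for $|\gamma|\le L$. Changing variables $y = X^t(\alpha)$, whose inverse has uniformly bounded Jacobian $|J^{-t}|$ by Lemma \ref{particlelemma}, turns this into an integral of $|(\partial^\gamma \grad K_\delta)(x-y)|$ over the bounded set $X^t(B_{R_0-1})$. Since the argument $x-y$ therefore stays in a ball whose radius depends only on $R$, $T$, and the velocity bound from Lemma \ref{reg velocity field lemma}, Lemma \ref{reg ker est lem} (with $g \equiv 0$) supplies the bound $C\,G_{|\gamma|}(\delta)$.

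Monotonicity of $G_L(\delta)$ in $L$ yields $\|g\|_{W^{L,1}} \le C\,G_L(\delta)$, and combining with the quadrature estimate gives the first inequality. For the second, the only change is that $\Delta K_\delta = \grad \cdot \grad K_\delta$, so each $\partial^\gamma \Delta K_\delta$ is a sum of terms $\partial^{\gamma'}\grad K_\delta$ with $|\gamma'| = |\gamma|+1 \le L+1$; Lemma \ref{reg ker est lem} then yields $C\,G_{L+1}(\delta)$. The main obstacle is the Faa di Bruno bookkeeping in the $W^{L,1}$ estimate and confirming uniform-in-$(x,t)$ control of $|J^{-t}|$ on the relevant set; beyond that, the proof is a direct assembly of Lemmas \ref{particlelemma}, \ref{trap quad}, and \ref{reg ker est lem}.
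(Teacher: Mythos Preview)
Your proposal is correct and follows essentially the same route as the paper's proof: change to Lagrangian coordinates via Lemma \ref{particlelemma}, apply the quadrature estimate Lemma \ref{trap quad} with $l=L$, then bound the resulting $W^{L,1}$ norm using the chain/product rules together with Lemma \ref{reg ker est lem}. The paper compresses your Faa di Bruno expansion and back-substitution $y=X^t(\alpha)$ into the single line ``applying the chain and product rules gives $|e_d(x,t)| \leq C\|\K_\delta\|_{W^{L,1}(B_{R+C})} h^L$,'' but the underlying steps are identical, as is your handling of $\Delta K_\delta$ via the extra derivative.
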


\begin{proof}
We prove the two estimates simultaneously by bounding
\[ e_d(x,t) = \int \K_\delta(x-y) \rho(y,t) dy - \sum \K_\delta(x-X_j(t)){\rho_0}_j h^d \ , \]
where $\K_\delta = \grad K_\delta$ or $\K_\delta = \Delta K_\delta$.
By Lemma \ref{particlelemma}, for all $\alpha \in B_{R_0+2}$, the particle trajectories, $X^t(\alpha)$ belong to $C^L$ and satisfy
\[  \int \K_\delta(x-y) \rho(y,t) dy  = \int \K_\delta(x-X^t(\alpha)) \rho_0(\alpha) d\alpha \ . \]
By Assumption \ref{exact sol reg as}, $\rho \in C^L(\Rd)$, and by Lemma \ref{regofgradKdelta}, $\grad K_\delta$ and $\Delta K_\delta$ also belong to $C^L(\Rd)$. Since $L \geq d+2$, we may bound the error using  Lemma \ref{trap quad},
\begin{align*}
|e_d(x,t)| &= \left| \int \K_\delta(x-X^t(\alpha)) \rho_0(\alpha) d\alpha - \sum \K_\delta(x-X_j(t)){\rho_0}_j h^d \right|  \ , \\
&\leq C_{L, d} \|\K_\delta(x-X(\cdot,t)) \rho_0(\cdot) \|_{W^{L,1}(\Rd)} h^L \ .
\end{align*} 
By  Assumption \ref{exact sol reg as}, $|X^t(\alpha)|<C$ for all $\alpha \in \supp \rho_0$, $t \in [0,T]$, so for $|x|< R$, ${|x - X^t(\alpha))| \leq R + C}$.
Thus, applying the chain and product rules give 
\begin{align*}
|e_d(x,t)| \leq C \| \K_\delta\|_{W^{L,1}(B_{R+C})} h^L \ .
\end{align*}
The result then follows from the regularized kernel estimates, Lemma \ref{reg ker est lem}.
\end{proof}

Finally, we combine the previous two propositions to prove Proposition \ref{consistency prop}.
\begin{proof}[Proof of Proposition \ref{consistency prop}]
 By (\ref{consistencybreakup}), Proposition \ref{moment prop}, and Proposition \ref{discretization prop}, taking $x = \tilde{X}_i(t)$ for $|ih| < R_0$,
\begin{align*}
| v_i(t) - v^h_i(t)|  &\leq  C (\delta^m+  G_L(\delta)h^L) \ , \\
|\grad \cdot v_i(t) - \grad \cdot v_i^h(t)| &\leq  C ( \delta^m+  G_{L+1}(\delta) h^L) \ .
\end{align*}
By Assumption \ref{exact sol reg as}, $\rho$ is bounded and supported in $B_{R_0}$. Hence,
\begin{align*}
\|v(t) - v^h(t)\|_{L^\infty_h(B_{R_0})}  &\leq  C (\delta^m+  G_L(\delta)h^L) \ , \\
\|\grad \cdot v(t) \rho(t)- \grad \cdot v^h(t)\rho(t)\|_{L^\infty_h} &\leq  C ( \delta^m+  G_{L+1}(\delta) h^L) \ .
\end{align*}
\end{proof}

\subsection{Stability} \label{stability section}
We now turn to the proof of stability, which relies on the following lemma relating the $L^p_h$ norm of a discrete convolution to the $L^p$ norm of a convolution. This lemma allows us to apply classical results for integral operators to conclude stability of the method. Our approach is strongly influenced by previous work on the stability of classical vortex blob methods for the Euler equations by Beale \cite{Beale} and Beale and Majda \cite{BealeMajda1, BealeMajda2}.

Let $Q_i$ be the $d$-dimensional cube with side length $h$ centered at $ih \in h \mathbb{Z}^d$ and define $Q^t_i = X^t(Q_i)$. Since $h  < \frac{1}{2}$, if we define $\Omega = \cup_{|ih|< R_0+1} Q_i$, then $\Omega \subseteq B_{R_0+2}$ and Lemma \ref{particlelemma} ensures that
\begin{align} \label{bimagnitude0}
|Q^t_i| = \int_{Q^t_i} dy = \int_{Q_i} |J^t(\alpha)| d\alpha  
\end{align}
and ${C_1h^d \leq |Q^t_i| \leq C_2 h^d}$, so $\{Q^t_i\}$ partitions $X^t(\Omega)$ for all $t \in [0,T]$.

\begin{lemma} \label{discreteconvolutionlemma} Let $G_L(\delta)$ be defined as in equation (\ref{delta growth us}) and let $J_j(t) = J^t(jh)$. Consider $y_{ij}(t) \in C([0,T],L^\infty_h(\Rd \times \Rd))$ and $g_j(t)\in C([0,T], L^\infty_h(\Rd))$,
where $\|y(t)\|_{L^\infty_h} \leq 2\delta$ and the support of $g_j(t)$ is contained in $B_{R_0-1}$ for all $t \in [0,T]$.

Then for any multiindex $|\beta| \leq L-1$  and  $1 < p \leq + \infty$, there exists $C, R>0$ depending on the exact solution, the kernel, the mollifier, the dimension, $\beta$, $T$, $R_0$, $p$, and $g$ so that for all $t \in [0,T]$,
\begin{align*}
& \left\| \sum_{|jh| < R_0} \partial^\beta \grad K_\delta(X_i(t) - X_j(t) + y_{ij}(t))g_j(t) h^d  \right\|_{L^p_h(B_{R_0+1})} \\
 &\quad \leq C \left(\|\partial^\beta \grad K_\delta *g(t)\|_{L^p(B_{R})} + \delta G_{|\beta|+1}(\delta) \|g(t)\|_{L^p} \right)\ .
 \end{align*}
 If $s = 1-d$, the above holds with $L^p(B_{R_0 +1})$ replaced by $L^p(B_{R_0 + C'})$ for all $C'\geq 0$ and the constants depend on $C'$.
\end{lemma}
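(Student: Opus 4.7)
The plan is to follow the Beale and Beale--Majda strategy: convert the discrete sum into a continuous integral by extending the grid data $g_j$ to a piecewise-constant function on the cell partition $\{Q^t_j\}$, then Taylor-expand around the perturbation $y_{ij}$ to isolate a genuine convolution (producing the first term on the right-hand side) from a small remainder controlled by the $L^1$ estimates of Lemma \ref{reg ker est lem}.

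First I would define $\tilde g : X^t(\Omega) \to \R$ by $\tilde g(y) = g_j h^d/|Q^t_j|$ for $y \in Q^t_j$, so that $\int_{Q^t_j} \tilde g\,dy = g_j h^d$ and, since $C_1 h^d \leq |Q^t_j| \leq C_2 h^d$, $\|\tilde g\|_{L^p} \sim \|g\|_{L^p_h}$. This allows the rewriting
\[
S_i := \sum_{|jh|<R_0} \partial^\beta \grad K_\delta(X_i - X_j + y_{ij})\,g_j\,h^d
= \int \partial^\beta \grad K_\delta\bigl(X_i - y + Z_i(y)\bigr)\,\tilde g(y)\,dy,
\]
where $Z_i(y) := y_{ij(y)} + y - X_{j(y)}$ and $j(y)$ is the index with $y \in Q^t_{j(y)}$. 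Because $|y - X_j| \leq Ch$ on $Q^t_j$, $\|y_{ij}\|_\infty \leq 2\delta$, and $h \leq \delta$, we have $\|Z_i\|_\infty \leq 3\delta$ uniformly in $i$.

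A first-order Taylor expansion in $Z_i(y)$ then decomposes $S_i = M_i + E_i$ with
\[
M_i = \bigl(\partial^\beta \grad K_\delta * \tilde g\bigr)(X_i), \qquad
E_i = \int_0^1\!\!\int \partial^{\beta+1}\grad K_\delta\bigl(X_i - y + s Z_i(y)\bigr)\cdot Z_i(y)\,\tilde g(y)\,dy\,ds.
\]
For $M_i$, since $X^t$ is a $C^L$ diffeomorphism of $\Omega$ with Jacobian bounded above and below (Lemma \ref{particlelemma}), a standard discrete-to-continuous $L^p$ estimate over the cells $\{Q^t_i\}$ yields $\|M\|_{L^p_h(B_{R_0+1})} \leq C\|\partial^\beta\grad K_\delta * \tilde g\|_{L^p(B_R)}$ for an $R$ containing $X^t(B_{R_0+1})$; combined with $\|\tilde g\|_{L^p} \sim \|g\|_{L^p}$, this furnishes the first term on the right-hand side.

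The main obstacle is the estimate for $E_i$, since the perturbation $Z_i(y)$ couples $i$ and $y$ and prevents writing $E$ as a convolution. Using $|Z_i|\leq 3\delta$, I bound
\[
|E_i| \leq 3\delta \int_0^1\!\!\int \bigl|\partial^{\beta+1}\grad K_\delta\bigl(X_i - y + s Z_i(y)\bigr)\bigr|\,|\tilde g(y)|\,dy\,ds,
\]
and then apply Schur's test. Lemma \ref{reg ker est lem}, applied with perturbation $sZ_i(y)$ (of size at most $\delta$), gives $\int |\partial^{\beta+1}\grad K_\delta(X_i - y + sZ_i(y))|\,dy \leq CG_{|\beta|+1}(\delta)$ uniformly in $i$; converting the discrete sum $\sum_i(\cdot)h^d$ into an integral over $\alpha$ via the diffeomorphism $X^t$ and applying the same lemma yields a matching uniform bound in $y$. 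Schur's test with these two bounds produces $\|E\|_{L^p_h(B_{R_0+1})} \leq C\delta\,G_{|\beta|+1}(\delta)\,\|\tilde g\|_{L^p}$. The case $s = 1-d$ is handled identically, using Assumption \ref{exact sol reg as} to control the range of $X^t$ over $[0,T]$, which accounts for the enlarged ball $B_{R_0+C'}$ on the right-hand side.
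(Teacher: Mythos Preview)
Your approach is essentially the paper's, with one tidy organizational difference: by normalizing $\tilde g = g_j h^d / |Q^t_j|$ you absorb the Jacobian and avoid the separate correction term $c(x,t) = \sum_j G(x,X_j,t)\,g_j\,(|J_j|h^d - |Q^t_j|)$ that the paper isolates and bounds via the $C^1$ regularity of $J^t$. Otherwise the decomposition into a convolution plus a perturbation remainder, and the control of the remainder by Lemma~\ref{reg ker est lem} together with the Schur/Young-type integral-operator inequality (the paper cites Folland, Theorem~6.18), are the same.

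One step is not quite ``standard'' as you assert. You set $M_i = (\partial^\beta\grad K_\delta * \tilde g)(X_i)$, a point evaluation, and then claim $\|M\|_{L^p_h(B_{R_0+1})} \leq C\|\partial^\beta\grad K_\delta * \tilde g\|_{L^p(B_R)}$. Point values do not in general control discrete $L^p$ norms by continuous $L^p$ norms without regularity input. The paper sidesteps this by keeping $x \in Q^t_i$ variable throughout: the main term becomes $a(x,t) = (\partial^\beta\grad K_\delta * g)(x)$, already a function on $X^t(\Omega)$, and the trivial bound $|w_i|^p h^d \leq C\int_{Q^t_i}|a+b+c|^p\,dx$ then gives the passage to $L^p$ immediately. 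The cost is that the perturbation in the remainder becomes $(X_i - x) + (y - X_j) + y_{ij}$, of size $(C+2)\delta$ rather than your $3\delta$. Your version is easily repaired---the oscillation $|f(X_i) - f(x)|$ over $Q^t_i$ has exactly the structure of your $E_i$ and is bounded identically---but this should be made explicit rather than labelled standard.
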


\begin{proof}
Define $w_i(t) = \sum_{|jh|<R_0} \partial^\beta \grad K_\delta(X_i - X_j + y_{ij}(t))g_j(t) J_j(t) h^d$, and for $x \in Q^t_i$, $y \in Q^t_j$, define  $G(x,y,t) = \partial^\beta \grad K_\delta(X_i - X_j + y_{ij}(t))$ and $g(y,t) = g_j(t)$. 
Since $J_j(t)$ is bounded below, it is enough to bound $\|w_i(t)\|_{L^p_h(B_{R_0+1})}$.
For $x \in Q^t_i$,
\begin{align*}
w_i(t)
&= \int \partial^\beta \grad K_\delta(x-y) g(y,t) dy + \int [G(x,y,t) - \partial^\beta \grad K_\delta(x-y)] g(y,t) dy \nonumber \\ &\quad \quad + \sum_{j} G(x,X_j(t),t) g_j(t) (|J_j(t)| h^d - |Q_j^t|) = a(x,t) +b(x,t) + c(x,t)  \ .
\end{align*}
By definition, $|w_i(t)| \leq \|a(t) +b(t) + c(t)\|_{L^\infty(Q_i^t)}$,
and for $1< p< +\infty$,
\begin{align*}
 |w_i(t)|^p h^d \leq \frac{h^d}{|Q_i^t|} \int_{Q_i^t} |a(x,t)+b(x,t)+c(x,t)|^p dx \leq C \|a(t)+b(t)+c(t)\|_{L^p(Q_i^t)}^p \ .
 \end{align*}
 Thus, for $1< p \leq + \infty$,
\begin{align*}
 \|w(t) \|_{L^p_h(B_{R_0+1})} \leq  C \|a(t)+b(t)+c(t)\|_{L^p(X^t(\Omega))}\ .
 \end{align*}
 By Assumption \ref{exact sol reg as}, there exists $R>0$ so that for $\alpha \in \Omega$, $|X^t(\alpha)|<R$.
Since $\|a(t)\|_{L^p(B_{R})} = \|  \partial^\beta \grad K_\delta*g(t) \|_{L^p(B_{R})}$,
 this gives the first term in our bound.
 
 It remains to control $b(t)$ and $c(t)$.
By Lemma \ref{particlelemma}, there exists $C>0$ so that for $x \in Q_i$, $y \in Q_j$,  $ih, jh \in \Omega$, $t \in [0,T]$,
\[ |X_i(t)-x|+|Y_i(t)-y| \leq Ch < C \delta \ . \]
Since $\|y(t)\|_{L^\infty_h} \leq 2\delta$, by the mean value theorem, there exists $z(x,y,t)$ with $|z(x,y,t)| \leq (C+2) \delta$ so that for $x \in X^t(\Omega)$,
\begin{align*}
 |b(x,t) |\leq \sum_{|\gamma| = |\beta| +1} \int \left|\partial^\gamma \grad K_\delta (x-y +z(x,y,t)) \right| (C+2)\delta |g(y,t)| dy \ .
 \end{align*}
By the regularized kernel estimates, Lemma \ref{reg ker est lem}, for all $y \in B_{R_0}$, $|\gamma| = |\beta|+1$,
\[ \|\partial^\gamma \grad K_\delta(x-y + z(x,y,t))\|_{L^1(B_R)} \leq C G_{|\beta|+1}(\delta) \ . \]
Therefore, by a classical inequality for integral operators \cite[Theorem 6.18]{Folland},
\begin{align*}
 \|b(x,t) \|_{L^p(X^t(\Omega))}\leq C \delta G_{|\beta|+1}(\delta) \|g(t)\|_{L^p} \ . 
 \end{align*}

Finally, we bound $c(t)$. By Lemma \ref{particlelemma}, $J^t(\alpha) \in C^{1}(\Rd)$. Hence,
\begin{align*}
 \|Q_j^t| - |J_j(t)| h^d | \leq \left|\int_{Q_j} |J^t(\alpha)| - |J_j(t)| d \alpha  \right| \leq C h^{d+1} \ .
\end{align*}
Therefore,
\begin{align*} 
|c(x,t)|   &= \left| \sum_{j} G(x,X_j(t),t) g_j(t) (|J_j(t)| h^d - |Q_j^t|) \right|  \leq C h^{d+1-d} \left| \sum_{j} G(x,X_j(t),t) g_j(t) h^d \right|    \\
&\leq C \left|\int G(x,y,t) g(y,t) dy \right|
 \leq C (|a(x,t)| + |b(x,t)|) \ .\nonumber
\end{align*}
Combining this with the bounds on $a(x,t)$ and $b(x,t)$ gives the result.

If $s = 1-d$, the above holds with ${R_0 +1}$ replaced by ${R_0 + C'}$ for all $C'\geq 0$.
\end{proof}

We now consider stability of the velocity, Proposition \ref{stability velocity prop}.
\begin{proof}[Proof of Proposition \ref{stability velocity prop}]
Define $e_i(t) = X_i(t) - \tilde{X}_i(t)$. As our estimates are uniform in $t \in [0,T]$, we suppress the dependence on time.

First, we decompose the difference between $v^h$ and $\tilde{v}$, isolating the effects of approximate and exact particle trajectories. By the mean value theorem, there exists $|y^{(1)}_{ij}| \leq |e_j| \leq \delta$ and $|y^{(2)}_{ij}| \leq 2 \delta$ so that
\begin{align}  \label{v1v2def}
v^h_i - \tilde{v}_i &=   \Sigma_{j} \grad K_\delta(X_i - \tilde{X}_j) {\rho_0}_j h^d- \Sigma_{j } \grad K_\delta(X_i-X_j) {\rho_0}_j h^d  \\
&\quad + \Sigma_{j } \grad K_\delta(\tilde{X}_i - \tilde{X}_j) {\rho_0}_j h^d - \Sigma_{j } \grad K_\delta(X_i - \tilde{X}_j) {\rho_0}_j h^d \nonumber \\
  &= \Sigma_{j } D^2 K_\delta \left(X_i - X_j + y^{(1)}_{ij} \right) e_j {\rho_0}_j h^d \nonumber\\ 
  &\quad + e_i \Sigma_{j} D^2 K_\delta \left(X_i - X_j + y^{(2)}_{ij}\right) {\rho_0}_j h^d  \ . \nonumber \\
&= v^{(1)}_i +e_iv^{(2)}_i  \ . \nonumber
\end{align}
Because it will be useful in the next proposition, we bound the $L^p_h$ and $L^\infty_h$ norms of $v^{(1)}$  and $v^{(2)}$ over $B_{R_0+1}$, instead of $B_{R_0}$.
By two applications of Lemma \ref{discreteconvolutionlemma} with $|\beta| = 1$, $g^{(1)}_j = e_j {\rho_0}_j$, and $g^{(2)}_j = {\rho_0}_j$,  there exists $C,R>0$ so that
\begin{align*} 
\|v^{(1)}\|_{L^p_h(B_{R_0+1})} &\leq C \left(\|D^2 K_\delta *g^{(1)}\|_{L^p(B_{R})} + \delta G_{2}(\delta) \|g^{(1)}\|_{L^p} \right)\ ,  \\
\|v^{(2)}\|_{L^\infty_h(B_{R_0+1})} &\leq C \left(\|D^2 K_\delta *g^{(2)}\|_{L^\infty(B_{R})} + \delta G_{2}(\delta) \|g^{(2)}\|_{L^\infty} \right)\ .
\end{align*}

To complete the proof, it suffices to show the first term is bounded by $C\|e\|_{L^p_h(B_{R_0})}$ and the second term is bounded by $C$.
We bound $v^{(1)}$ by showing that there exists $C>0$ so that $\|D^2 K_\delta*g^{(1)}\|_{L^p(B_{R})} \leq C \|g^{(1)}\|_{L^p}$. By the linearity of convolution and differentiation, if we show the result for $K= \sum_{n=1}^N K_n$ for $N=1$, this implies the result for $N>1$. Thus, we may assume $K= K_1$ for $s = S \geq 1-d$.

When $s = 1-d$, we can apply the Calder\'on Zygmund inequality. When $s>1-d$, $|D^2K(x)| \leq C|x|^{s-1} \in L^1_\loc(\Rd)$, and we can apply Young's inequality. Thus,
\[\|D^2 K_\delta *g^{(1)}\|_{L^p(B_{R})} \leq \|D^2 K*g^{(1)}\|_{L^p(B_{R})} \|\psi_\delta\|_{L^1} \leq C \|g^{(1)} \|_{L^p} \ . \]
Since $\delta G_{2}(\delta) \leq  C$, we use the definition  $g_j^{(1)} = e_j \rho_j$, to conclude
 \begin{align} \label{v1bound}
 \|v^{(1)}\|_{L^p_h(B_{R_0+1})} \leq C \|g^{(1)}\|_{L^p} &\leq C \|e\|_{L^p_h(B_{R_0})}  \ . 
\end{align}

We now turn to $v^{(2)}$. For $y \in X^t(\Omega)$, 
\[ |g^{(2)}(y) - \rho_0(X^{-t}(y))| \leq \sup_{j, \alpha \in Q_j} |{\rho_0}_j - \rho_0(\alpha)| \leq Ch \ . \]
Hence,
\begin{align*}
\left|\int D^2 K_\delta(x-y) g^{(2)}(y) dy \right| &\leq C \max_{|\gamma|=1} \left| \int \grad K_\delta(x-y) \partial^\gamma \rho_0(X^{-t}(y))dy \right| \\
&\quad +  \left| \int D^2 K_\delta(x-y) (g^{(2)}(y)- \rho_0(X^{-t}(y)))dy \right| \ .
\end{align*}
By Assumption \ref{exact sol reg as}, Lemma \ref{particlelemma}, and Lemma \ref{reg ker est lem}, and the fact that $h < \delta$, the above quantity is bounded by a constant for $|x|<R$.
Thus, 
\begin{align} \label{linftyboundstability}
 \|v^{(2)} \|_{L^\infty_h(B_{R_0+1})}  \leq \|D^2 K_\delta *g^{(2)}\|_{L^\infty(B_{R})} + \delta G_{2}(\delta) \|g^{(2)}\|_{L^\infty} \leq C \ . 
\end{align}
\end{proof}

We now prove stability of the divergence of the velocity.
\begin{proof}[Proof of Proposition \ref{stability density prop}]
As in the previous proof, define $e_i(t) = X_i(t) - \tilde{X}_i(t)$ and $f_i(t) = \rho_i(t) - \tilde{\rho}_i(t)$. Since our estimates are uniform in time, we suppress dependence on $t$.

We decompose the difference between $\grad \cdot v^h_i\rho_i$ and $\grad \cdot \tilde{v}_i\tilde{\rho}_i$ as
\begin{align}
\grad \cdot v^h_i\rho_i - \grad \cdot \tilde{v}_i\tilde{\rho}_i  & \leq \left(\grad \cdot v^h_i - \grad \cdot \tilde{v}_i \right)\rho_i   + \grad \cdot v^h_i \left( \rho_i - \tilde{\rho}_i \right)  + \left(\grad \cdot \tilde{v}_i -\grad \cdot v^h_i \right) \left( \rho _i - \tilde{\rho}_i \right) \nonumber  \\
&\leq a_i + b_i + c_i \label{abc def}
\end{align}

First we bound the $W^{-1,p}_h$ norm of $a$ in terms of the $L^p_h$ norm of $e$. As in the proof of the stability of the velocity, we further decompose $\grad \cdot v^h_i - \grad \cdot \tilde{v}_i$,
\begin{align} \label{div vel decomp1}
\grad \cdot v^h_i - \grad \cdot \tilde{v}_i &= \sum_{j } \Delta K_\delta(X_i - \tilde{X}_j) {\rho_0}_j h^d - \sum_{j } \Delta K_\delta(X_i-X_j) {\rho_0}_j h^d   \\
&\quad + \sum_{j } \Delta K_\delta(\tilde{X}_i - \tilde{X}_j) {\rho_0}_j h^d - \sum_{j} \Delta K_\delta(X_i-\tilde{X}_j) {\rho_0}_j h^d   \nonumber \ .
\end{align}
By Taylor's theorem, there exists $|y^{(1)}_{ij}|\leq |e_j| \leq \delta^2$ and $|y^{(2)}_{ij}| \leq 2 \delta^2$ so that the above can be further decomposed as
\begin{align} \label{div vel decomp2}
%& \grad \cdot v^h_i - \grad \cdot \tilde{v}_i \nonumber \\
 & \sum_{j }\grad \Delta K_\delta \left(X_i - X_j \right) e_j {\rho_0}_j h^d + \sum_{|\gamma|=2}\sum_{j } \frac{1}{\gamma!}\partial^\gamma \Delta K_\delta(X_i - X_j + y^{(1)}_{ij}) e_j^\gamma {\rho_0}_j h^d  \\
&  +    \sum_{j }\grad \Delta K_\delta \left(X_i - X_j \right) e_i {\rho_0}_j h^d + C\sum_{|\gamma|=2}\sum_{j } \frac{1}{\gamma!} \partial^\gamma \Delta K_\delta(X_i - X_j + y^{(2)}_{ij}) e_i^\gamma {\rho_0}_j h^d \nonumber \\
 &= a^{(1)}_i + A^{(1)}_i + a^{(2)}_i +A^{(2)}_i \nonumber
 \end{align}
 With this decomposition, $a_i = \rho_i (a^{(1)}_i + A^{(1)}_i + a^{(2)}_i +A^{(2)}_i)$.
 
First, consider $A^{(1)}$ and $A^{(2)}$. By Lemma \ref{discreteconvolutionlemma} with $|\beta| = 3$, $g^{(1)}_j = e_j^\gamma {\rho_0}_j$, and $g^{(2)}_j = {\rho_0}_j$,  there exists $C,R>0$ so that
\begin{align*}
 \| A^{(1)}\|_{L^p_h(B_{R_0})} &\leq C \|D^4 K_\delta* g^{(1)}\|_{L^p(B_R)} + \delta G_4(\delta) \|g^{(1)}\|_{L^p} \ , \\
  \| A^{(2)}\|_{L^p_h(B_{R_0})} &\leq  C\|e^\gamma\|_{L^p_h(B_{R_0})} \left( \|D^4 K_\delta* g^{(2)}\|_{L^\infty(B_R)} + \delta G_4(\delta) \|g^{(2)}\|_{L^\infty} \right) \ .
 \end{align*}
 Since $|e^\gamma_i| \leq C  |e_i|^2 \leq C \delta^2 |e_i|$, by the regularized kernel estimates Lemma \ref{reg ker est lem}, we have for both $A^{(1)}$ and $A^{(2)}$,
\begin{align} \label{A1A2bound}
  \| A^{(\cdot)}\|_{L^p_h(B_{R_0})} \leq C (\delta^2 G_3(\delta)+ \delta^3 G_4(\delta)) \|e\|_{L^p_h(B_{R_0})} \leq C \|e\|_{L^p_h(B_{R_0})} \ .
 \end{align}
By Proposition \ref{discrete norm prop} (a) and the fact that $\rho_i$ is bounded and supported in $B_{R_0}$, this implies $\|\rho A^{(1)}\|_{W^{-1,p}_h}$ and $\|\rho A^{(2)}\|_{W^{-1,p}_h} \leq C \|e\|_{L^p_h(B_{R_0})}$.

Now, consider $\rho a^{(1)}$. For $\alpha \in Q_i$, define
  \begin{align*}
 F(\alpha) &= \sum_j \Delta K_\delta(X^t(\alpha) - X_j(t)) e_j \rho_{0_j} h^d \ .
  \end{align*}
Let $\grad^h_i$ be a finite difference operator of the form in Proposition \ref{discrete norm prop} (e) and suppose it is $l$th order accurate, i.e. $\|\grad^h_i F - \grad_\alpha F\|_{L^p_h(B_{R_0})} \leq Ch^l \|D_\alpha^{l+1} F\|_{L^p_h(B_{R_0 +lh})}$. Define $Z(\alpha,t) = \grad_\alpha X^t(\alpha)$ and $Z_i(t) = \grad_\alpha X^t(ih)$. Rewriting $a^{(1)}$ with a Lagrangian derivative and approximating it by this finite difference operator,
\begin{align} \label{stab pf 2}
 a^{(1)}_i = (Z_i^{-1}) \grad_\alpha F(ih) = (Z_i^{-1}) \grad_i^h F(ih) + (Z_i^{-1}) \left( \grad_\alpha F(ih)- \grad_i^h F(ih)\right)\ .
  \end{align}

To bound the first term in $\rho a^{(1)}$, let $\psi:\Rd \to [0,1]$ be a smooth function satisfying $\psi(x) = 1$ for $|x| \leq R_0+ 1/2$ and $\psi(x) = 0$ for $|x| \geq R_0+1$, and let $\psi_i = \psi(ih)$. Since $h<1/2$ and $\rho_i$ is supported in $B_{R_0}$, $\rho_i \grad^h_i F(ih) = \rho_i \grad^h_i (\psi_i F(ih))$. Therefore, by Proposition \ref{discrete norm prop} (e), Assumption \ref{exact sol reg as}, and Lemma \ref{particlelemma},
\begin{align*}
\| (\rho_i Z_i^{-1})\grad^h_i F(ih) \|_{W^{-1,p}_h} &\leq \|\rho Z^{-1}\|_{W^{1,\infty}_h} \|\grad^h_i (\psi_i F(ih)) \|_{W^{-1,p}_h} \leq C \| F(ih) \|_{L^p_h(B_{R_0+1})} \ .
\end{align*}
By the definition of $v^{(1)}$ (\ref{v1v2def}) and inequality (\ref{v1bound}) from the previous proof, $\|\rho a^{(1)} \|_{W^{-1,p}_h}$ is bounded by $C \|e\|_{L^p_h(B_{R_0})} $.

Next, we bound the second term in $\rho a^{(1)}$. Combining Proposition \ref{discrete norm prop} (a), Assumption \ref{exact sol reg as}, Lemma \ref{particlelemma}, and the $l$th order accuracy of $\grad_i^h$,
\begin{align*}
&\| (\rho_i Z_i^{-1}) \left( \grad_\alpha F(ih)- \grad_i^h F(ih)\right)\|_{W^{-1,p}_h}  \leq \|\rho Z^{-1}\|_{W^{1,\infty}_h} \|\grad_\alpha F(ih)- \grad_i^h F(ih)\| _{W^{-1,p}_h} \ , \\
&\leq C \|\grad_\alpha F(ih)- \grad_i^h F(ih)\|_{L^p_h(B_{R_0})} \leq Ch^l \|D^{l+1} F\|_{L^q_h(B_{R_0} +lh)}   \ .
\end{align*}

When $s = 1-d$, we choose $l \geq q/(1-q)$. Otherwise, we choose $l =1$.
We can now apply Lemma \ref{discreteconvolutionlemma} with $|\beta| = l+2$, $g_j = e_j {\rho_0}_j$ to control the right hand side. 
Combining this with Young's inequality and Lemma \ref{reg ker est lem} gives
\begin{align*} 
Ch^l \|D^{l+1} F\|_{L^q_h(B_{R_0} +lh)} &\leq C h^l \left(\|D^{l+3} K_\delta *g\|_{L^p(B_{R})} + \delta G_{l+3}(\delta) \|g\|_{L^p} \right) \\
&\leq C h^l G_{l+2}(\delta) \|g\|_{L^p} \leq C h^lG_{l+2}(\delta)  \|e\|_{L^p_h(B_{R_0})}\ .
\end{align*}
Since $\frac{1}{2}<q<1$ and $\delta \geq h^q$,
\[ h^lG_{l+2}(\delta) \leq\begin{cases}
h^l & \mbox{if } l+2 < s + d \\ 
q h^l |\log(h)| & \mbox{if } l+2 = s +d \\
h^l h^{q(s + d -l-2)} &\mbox{if } l+2 > s +d \  . \end{cases}  \]
When $s = 1-d$, $l \geq q/(1-q)$, and when $s>1-d$, $l =1$. Thus, $l \geq \max \{ 1, q(l+2- s -d) \}$ and the above quantity is bounded by a constant.

It remains to bound $ \| \rho a^{(2)}\|_{W^{-1,p}_h} \leq C \|a^{(2)}\|_{L^p_h(B_{R_0})}$. 
Since
\begin{align}\label{den stab pf 0}
\|a^{(2)}\|_{L^p_h(B_{R_0})} \leq C  \|e\|_{L^p_h(B_{R_0})} \left\|\Sigma_{j }\grad \Delta K_\delta \left(X_i - X_j \right) {\rho_0}_j h^d \right\|_{L^\infty_h(B_{R_0})} \ ,
\end{align}
it suffices to show
\begin{align} \label{den stab pf 1}
\left\|\Sigma_{j }D^2 \grad K_\delta \left(X_i - X_j \right) {\rho_0}_j h^d \right\|_{L^\infty_h(B_{R_0 +1})} \leq C \ .
\end{align}
 For any $N >d$, the quadrature Lemma \ref{trap quad} and the regularized kernel estimates Lemma \ref{reg ker est lem} imply
\begin{align*}
 &\left| \int_\Rd D^2 \grad K_\delta (x-X^t(\alpha)) \rho_0(\alpha) d\alpha - \sum_{j \in \mathbb{Z}^d} D^2 \grad K_\delta(x-X_j(t)) {\rho_0}_j h^d \right|  \\
 &\quad \leq C \| D^2 \grad K_\delta (x-X^t(\cdot)) \rho_0(\cdot)\|_{W^{N,1}(\Rd)} h^N \leq C h^N G_{N+2}(\delta)\ .
 \end{align*}
 As argued above, since $\frac{1}{2}<q<1$, $\delta \geq h^q$, choosing $N$ large enough so $N+2>s+d$ and $N >q/(1-q)$, the above quantity is bounded by a constant. Finally,
\begin{align*}
\left| \int_\Rd D^2 \grad K_\delta (x-X^t(\alpha)) \rho_0(\alpha) d\alpha \right| &=\left| \int_\Rd D^2 \grad K_\delta (x-y) \rho(y,t) dy \right| \ , \\
&\leq C \left| \int_\Rd  \grad K_\delta (x-y) D^2 \rho(y,t) d y \right| \leq C \ .
\end{align*}
Combining our estimates, we conclude
\[ \|a \|_{W^{-1,p}_h} \leq  \|\rho(a^{(1)} +A^{(1)} + a^{(2)} +A^{(2)}) \|_{W^{-1,p}_h} \leq C \| e\|_{L^p_h(B_{R_0})} \ . \]

Now that we have controlled $a$, the second and third terms in (\ref{abc def}) follow quickly. We seek to bound $\|b_i\|_{W^{-1,p}_h}$ in terms of $\|f_i\|_{W^{-1,p}_h}$. By assumption, $\rho_i$ vanishes outside of $B_{R_0}$, hence it suffices to show $\|\grad \cdot v^h\|_{W^{1,\infty}_h(B_{R_0})}$ is bounded by a constant. The fact that $\|\grad \cdot v^h\|_{L^\infty(B_{R_0})} \leq C$ follows from the bound on $v^{(2)}$ from the previous proof (\ref{v1v2def}, \ref{linftyboundstability}). The fact that $\| D^+_j\grad \cdot v^h\|_{L^\infty(B_{R_0})} \leq C$ follows from inequality (\ref{den stab pf 1}) above.

Finally, we turn to the last term in (\ref{abc def}). To bound $\|c\|_{W^{-1,p}_h}$ in terms of $\|e\|_{L^p_h}$, we may use the decomposition of $\grad \cdot v^h - \grad \cdot \tilde{v}  = a^{(1)} + A^{(1)} + a^{(2)} +A^{(2)}$ given by (\ref{div vel decomp1}, \ref{div vel decomp2}). By (\ref{A1A2bound}, \ref{den stab pf 0}, \ref{den stab pf 1}), and the fact that $\|  \rho_i - \tilde{\rho}_i\|_{L^\infty_h} \leq \delta^2 \leq C$,
\[ \|(A^{(1)} + a^{(2)} +A^{(2)})( \rho_i - \tilde{\rho}_i) \|_{W^{-1,p}_h} \leq  C \|A^{(1)} + a^{(2)} +A^{(2)}\|_{L^{p}_h(B_{R_0})}\leq C \|e\|_{L^p_h(B_{R_0})} \ . \]
By Lemma \ref{discreteconvolutionlemma} with $|\beta| = 2$ and $g_j = e_j {\rho_0}_j$,  there exists $C,R>0$ so that
\begin{align*}
&\|a^{(1)} ( \rho_i - \tilde{\rho}_i) \|_{W^{-1,p}_h} \leq \|\Sigma_{j }\grad \Delta K_\delta \left(X_i - X_j \right) e_j {\rho_0}_j h^d \|_{L^p_h(B_{R_0})}  \|  \rho_i - \tilde{\rho}_i\|_{L^\infty_h} \\
&\leq C\delta^2 ( \|D^3 K_\delta* g\|_{L^p(B_R)} + \delta G_3(\delta) \|g\|_{L^p}) \leq C (\delta^2 G_2(\delta)+ \delta^3 G_3(\delta)) \|e\|_{L^p_h(B_{R_0})}
\end{align*}
The above is bounded by $C \|e\|_{L^p_h(B_{R_0})} $. This completes the proof.
\end{proof}

\section{Numerics} \label{NumericsSection}
We now present several numerical examples in one and two dimensions for a range of kernels and initial data. These examples confirm the rate of convergence obtained in Corollary \ref{firstconvergencethm} and illustrate the varied phenomena  of solutions to the aggregation equation, including blowup and pattern formation. 

\subsection{Numerical implementation}
We implement the blob method in Python, using the NumPy, SciPy, and matplotlib libraries \cite{SciPy}. We approximate solutions to the ordinary differential equations which comprise our method using the VODE solver \cite{VODE}, which uses either a backward differentiation formula (BDF) method or an implicit Adams method, depending on the stiffness of the problem.

In one dimension, we use the mollifiers
\begin{align} \label{both 1d mollifiers}
\psi^{(4)}(x) = \frac{4}{3 \sqrt{\pi}} e^{-|x|^2} - \frac{1}{6 \sqrt{\pi}} e^{-|x|^2/4} \ , \ \psi^{(6)}(x) = \frac{16}{15} \psi^{(4)}(x) - \frac{1}{30} \psi^{(4)}(x/2) \ .
\end{align}
These satisfy Assumption \ref{mollifier assumption} with $m =4$, $6$, $L = +\infty$. In two dimensions, we use
\begin{align} \label{2d mollifier}
\psi^{(4)}(x) = \frac{2}{\pi} e^{-|x|^2} - \frac{1}{2 \pi} e^{-|x|^2/2} \ ,
\end{align}
which satisfies  Assumption \ref{mollifier assumption} with $m =4$ and $L = +\infty$.

After selecting a mollifier, one next computes $\grad K_\delta = \grad K * \psi_\delta$ and $\Delta K_\delta = \Delta K * \psi_\delta$. If $\grad K$ and $\Delta K$ are polynomials of degree less than $m$, convolution with $\psi$ preserves the polynomial and $\grad K_\delta = \grad K$, $\Delta K_\delta = \Delta K$. (See Remark \ref{mollifierAccuracy}.) If $K$ is the Newtonian potential $(\Delta)^{-1}$, we have $\Delta K_\delta = \psi_\delta$ and $\grad K_\delta(x) = \frac{x}{|x|^d} \int_0^r s^{d-1} \psi_\delta(s)ds$. 
 
Aside from these special cases, in which an exact expression for the mollified kernel may be found, we compute the convolution numerically using a fast Fourier transform in radial coordinates on a ball of radius 2.5 centered at the origin. Depending on the accuracy we seek, we partition the domain into between 100 and $2 \times 10^6$ grid points and interpolate to obtain $\grad K_\delta(x)$ and $\Delta K_\delta(x)$.
 
Finally, when $K$ is the Newtonian potential, we may also compare our approximate numerical solutions to exact solutions. We are able to compute exact solutions for radial initial data by rewriting the aggregation equation in mass coordinates. This gives the following formula for the particle trajectories in radial coordinates \cite[Section 4]{BertozziGarnettLaurent} and the density along particle trajectories \cite{BertozziLaurentLeger},
\begin{align*}
r(t)^d &= r(0)^d-(d t) m(r(0),0) \ , \quad 
 \rho(X^t(\alpha),t) &= \begin{cases}
 \left( \frac{1}{\rho_0(\alpha)} - t \right)^{-1} &\mbox{ if $\rho_0(\alpha) \neq 0$ ,} \\
0 &\mbox{ if $\rho_0(\alpha) = 0$ .}
\end{cases}
\end{align*}

\subsection{One dimension, $K$ = Newtonian Potential = $(\Delta)^{-1}$}
\begin{figure}[h]
\centering
\begin{overpic}[trim={1.1cm .7cm .5cm 1cm},clip,width=.4\textwidth]{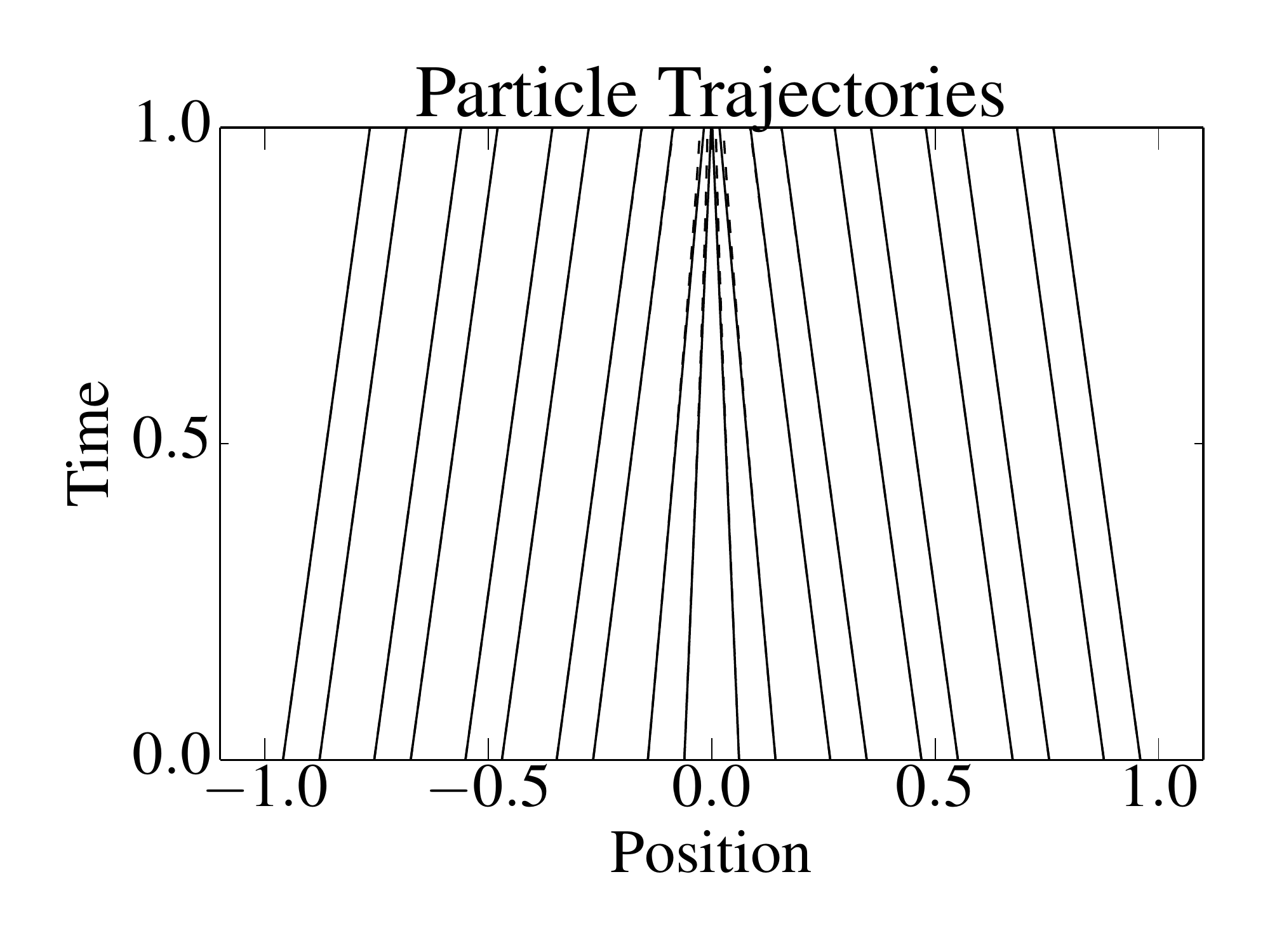}
\put(22.5,67.5){\small A.}
\end{overpic}
\begin{overpic}[trim={1.1cm .7cm .5cm 1cm},clip,width=.4\textwidth]{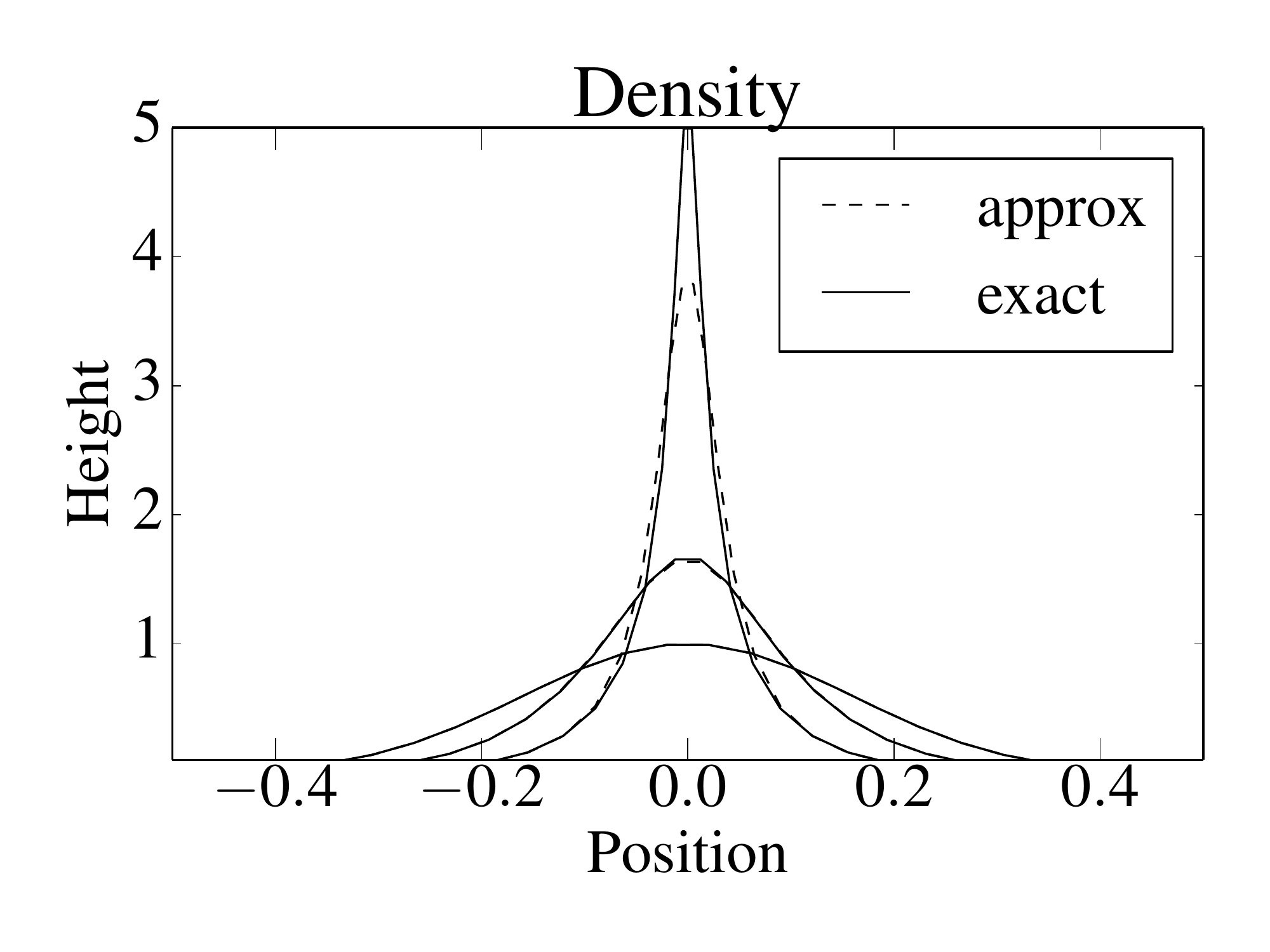}
\put(36,67.5){\small B.}
\end{overpic}
\\
\begin{overpic}[trim={.7cm 1.2cm .8cm .85cm},clip,width=.4\textwidth]{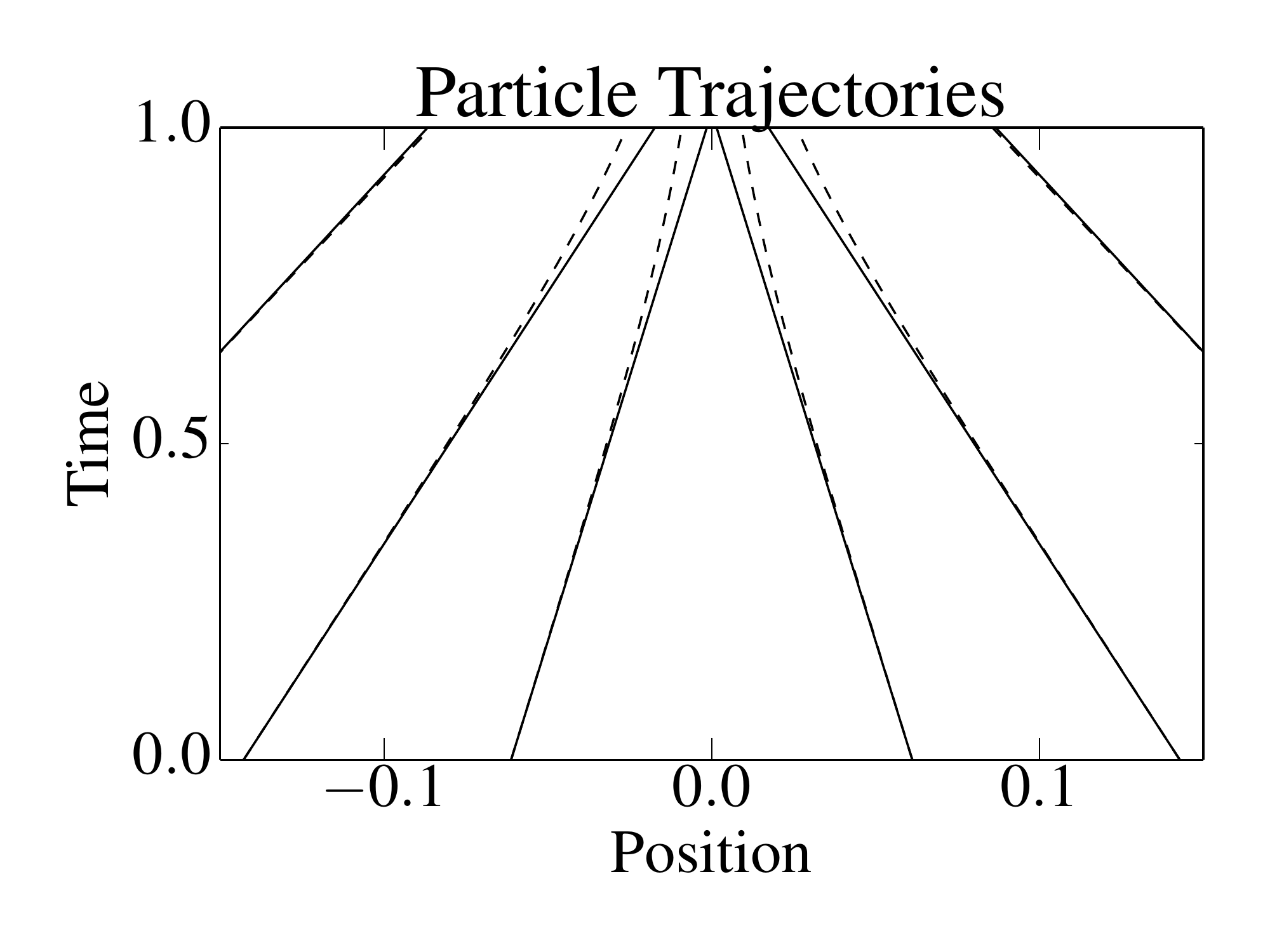}
\put(25,64.5){\small C.}
\end{overpic}
\begin{overpic}[trim={.5cm 1.2cm 1cm .85cm},clip,width=.415\textwidth]{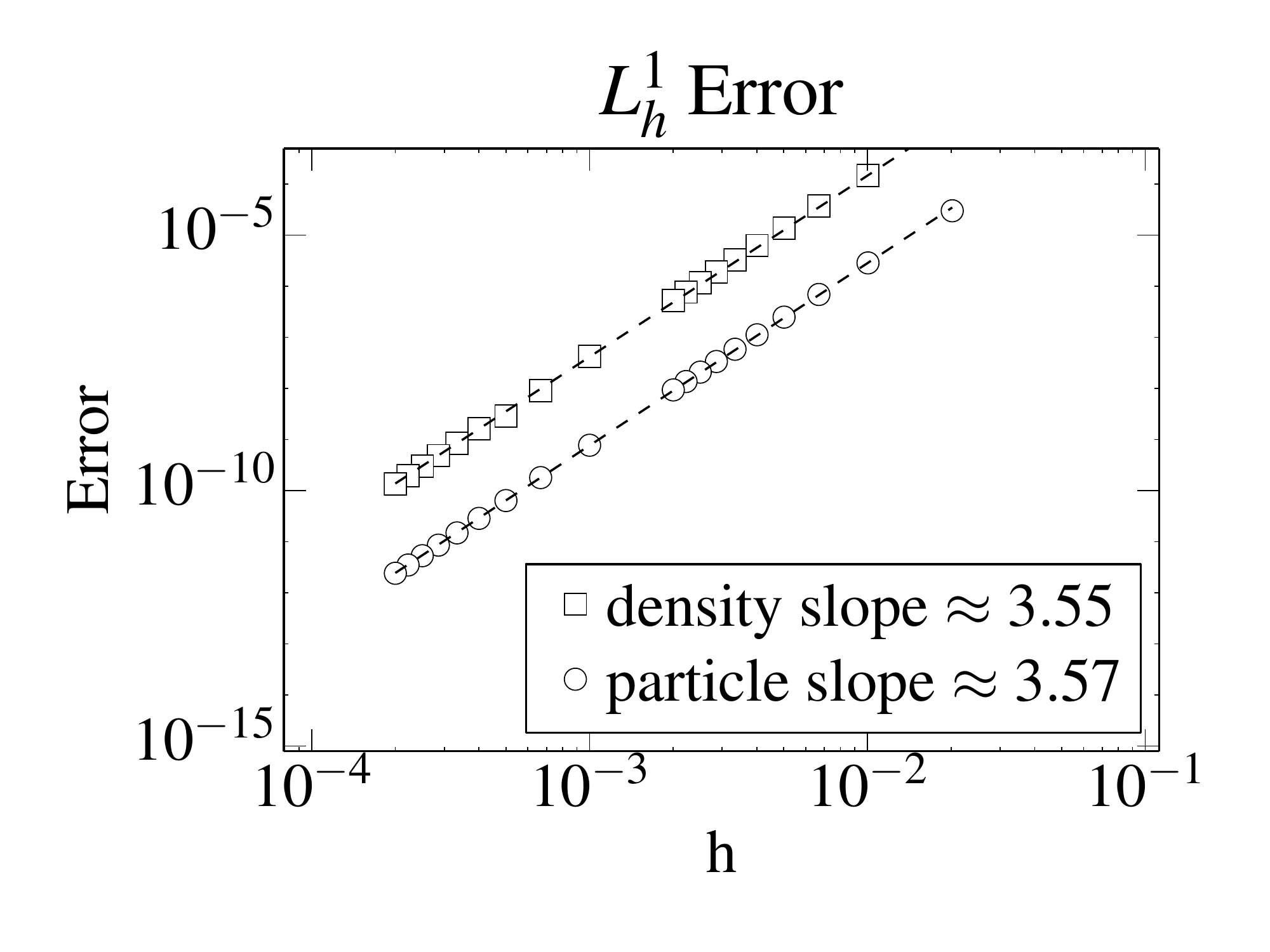} 
\put(41,64.7){\small D.}
\end{overpic}
\caption{A comparison of exact solutions (solid lines) with blob method solutions (dashed lines). The densities (B) are shown at times $t=0, 0.4,0.8$. The log-log plot  (D) corresponds to $t=0.5$.}
\label{1dsmooth}
\end{figure}

\quad \\ \noindent
\textbf{Blob method, regular initial data: }Figure \ref{1dsmooth} compares exact and blob method solutions to the one dimensional aggregation equation when $K$ is the Newtonian potential and the initial initial data is
\begin{align} \label{pfn20rho0}
\rho_0(x) = \begin{cases} (1-x^2)^{20} &\mbox{if } |x|\leq 1 \ , \\ 
0  & \mbox{otherwise} \ . \end{cases}
\end{align}
For this initial data, finite time blowup for the classical solution occurs at ${t=1}$.

We discretize the domain $[-1,1]$ using $h = 0.04$. With this refinement of the grid,  the approximate and exact particle trajectories are visually indistinguishable (A), though the approximate density loses resolution at $t= 0.8$ (B).
Focusing on a smaller spatial scale  (C) reveals that the approximate particle trajectories bend away from the exact solution to avoid collision at $t=1$. This is due to the regularization of the kernel, which causes the velocity field to be globally Lipschitz. Bhat and Fetecau observed the same bending effect for an analogous regularization in their work on Burgers equation \cite{BhatFetecau2}.

In spite of the bending, blob method solutions converge to exact solutions with a high order rate of convergence for $t<1$. We choose the $m=4$ mollifier $\psi^{(4)}$ (\ref{both 1d mollifiers}) and $\delta = h^q$ for $q = 0.9$. A log-log plot of the $L^1_h$ error of the particle trajectories and density (D) reveals a numerical rate of convergence close to the theoretically predicted rate of $mq = 3.6$. Note that the numerical result is slightly stronger than the theoretical result, which measures the error of the approximate density in $W^{-1,1}_h$ and requires the exact solution to be smooth.

\begin{figure}[h]
\centering
\includegraphics[trim={.7cm 1.2cm .8cm 1cm},clip,width=.4\textwidth]{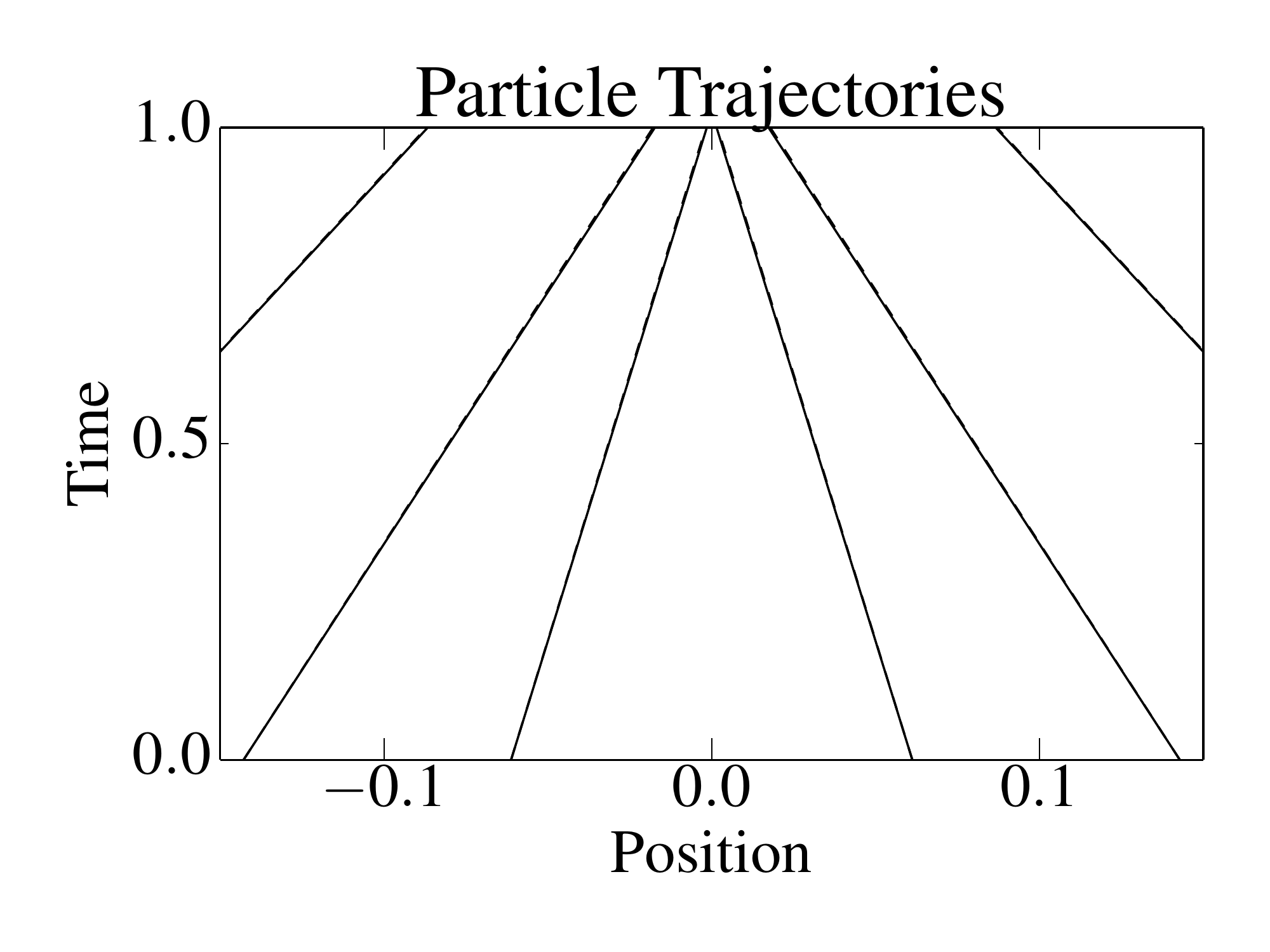}
\includegraphics[trim={.5cm 1.2cm 1cm .9cm},clip,width=.415\textwidth]{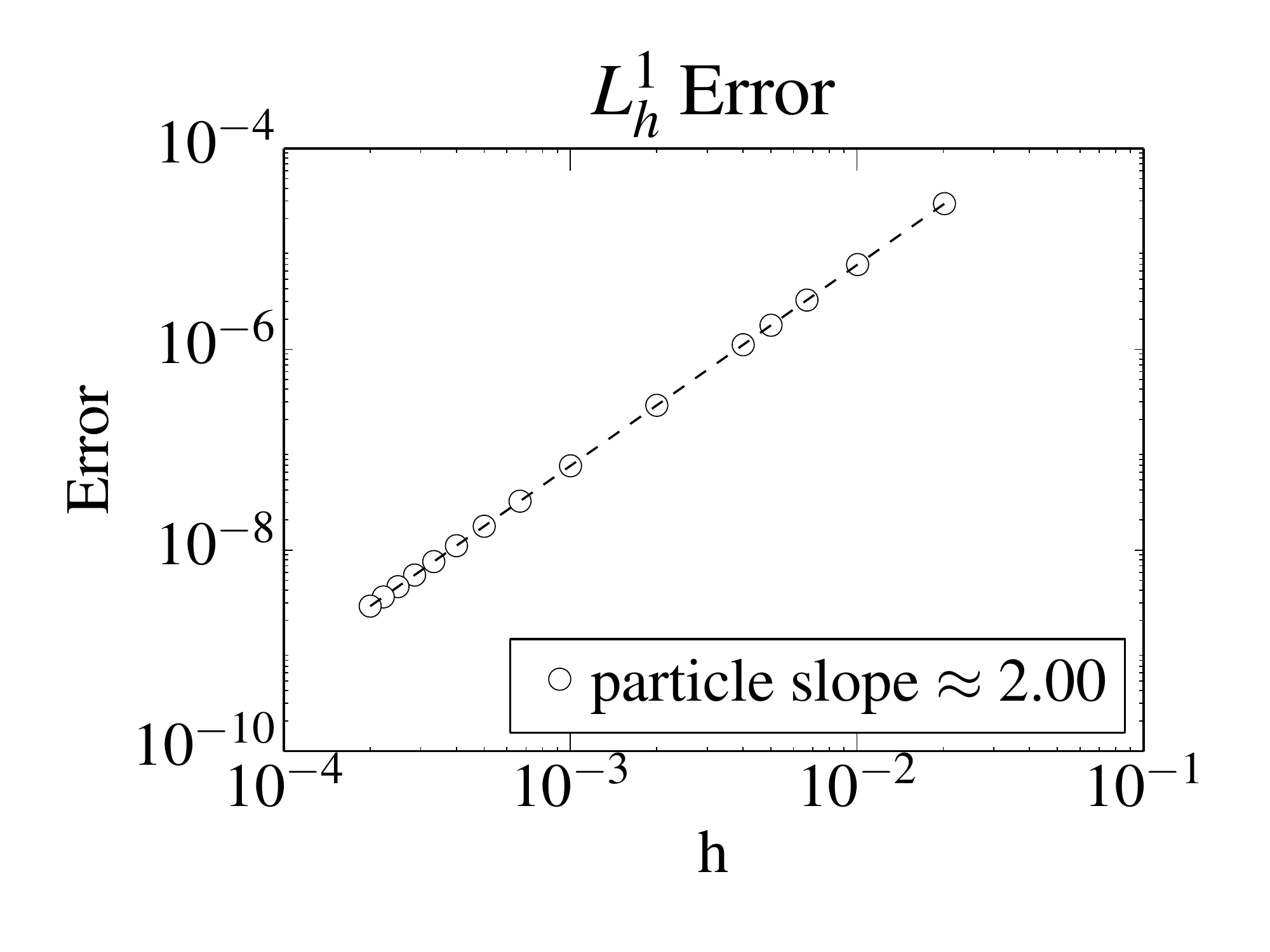}
\caption{A comparison of exact solutions (solid lines) and particle method solutions (dashed lines). The log-log plot of the error (right) corresponds to $t=0.5$.}  \label{monkey}
\end{figure}

\quad \\ \noindent
\textbf{Particle method, regular initial data: }
Figure \ref{monkey} 
compares a particle method approximation with an exact solution for initial data (\ref{pfn20rho0}). To compute the particle method approximation, we remove the singularity at zero by setting $\grad K(0)= 0$, instead of regularizing the kernel. Since $K$ is the Newtonian potential, the particle method densities are point masses, which do not belong to $L^p_h$. Thus, we only consider the particle trajectories defined by this method.

Unlike the blob method solution from in the previous example, the particle method trajectories do not bend away at blowup time. However, a log-log plot of the $L^1_h$ error reveals a slower rate of convergence than for the blob method, consistent with the rate of $\mathcal{O}(h^{2-\epsilon})$ for particle approximations of the Euler equations \cite{GoodmanHouLowengrub, HouLowengrub}.

\begin{figure}[h]
\begin{centering}
\begin{overpic}[trim={1.1cm 1cm .5cm 1cm},clip,width=.4\textwidth]{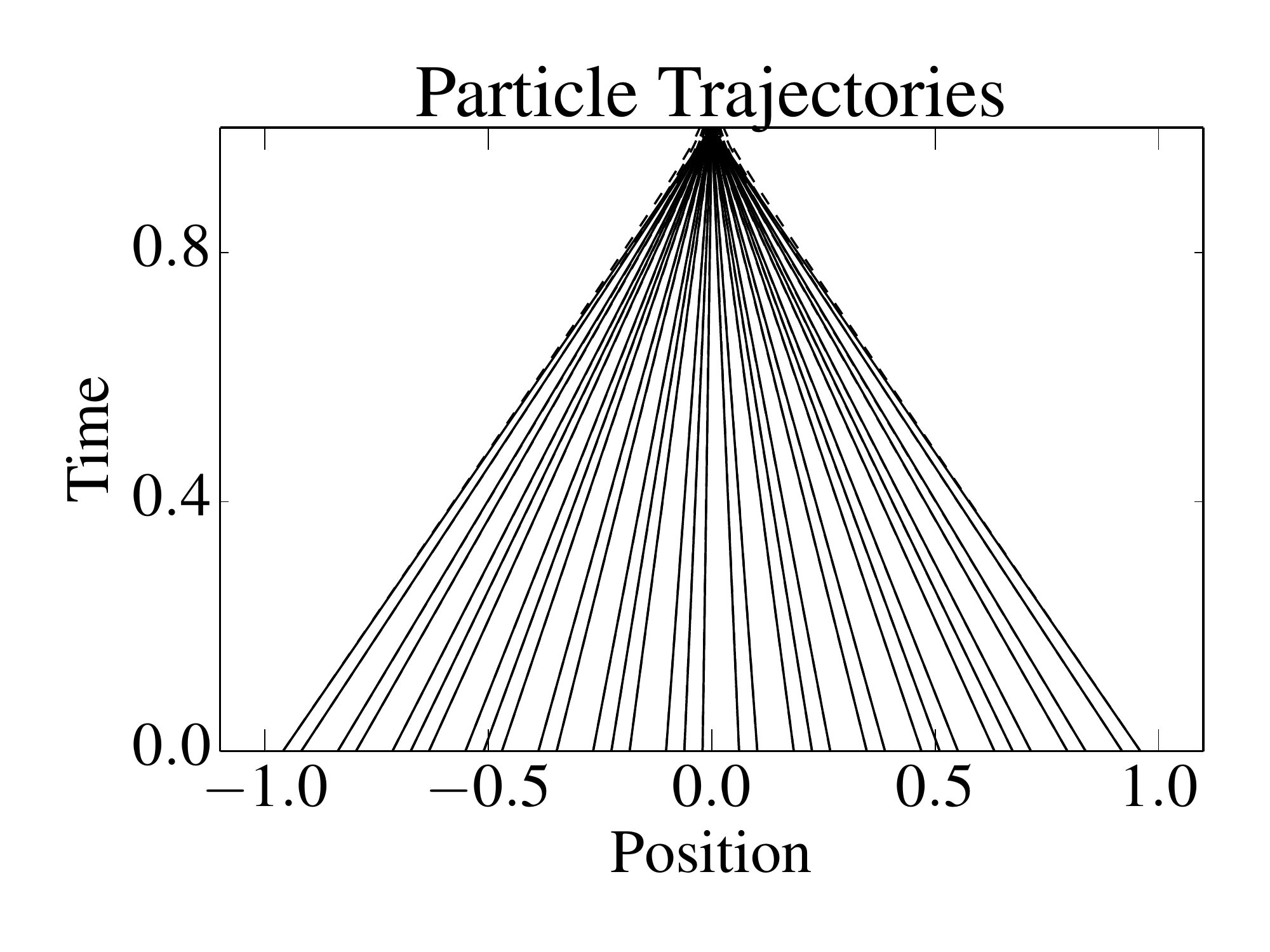}
\put(22.5,66.2){\small A.}
\end{overpic}
\begin{overpic}[trim={1.1cm 1cm .5cm 1cm},clip,width=.4\textwidth]{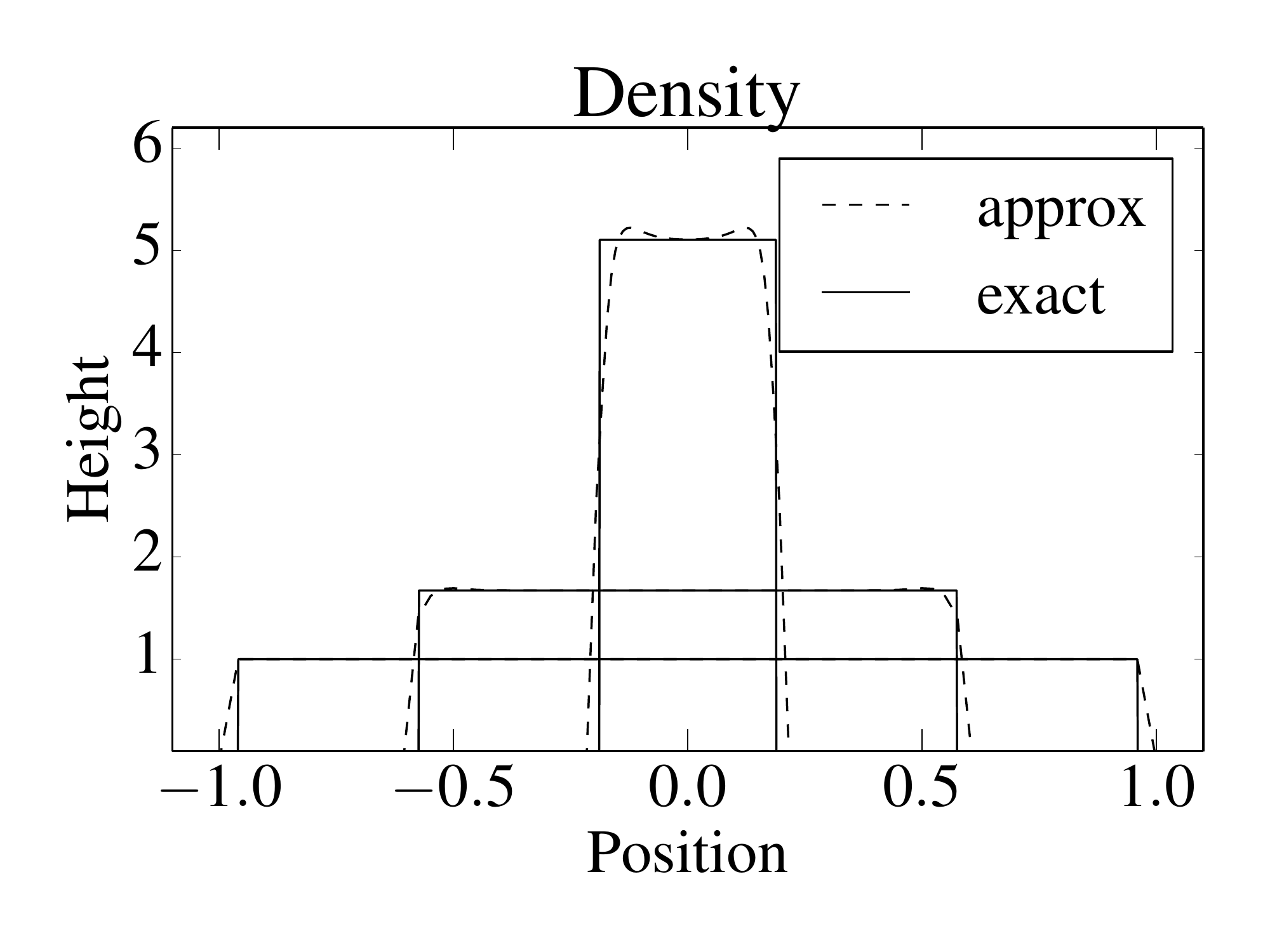}
\put(36,66.2){\small B.}
\end{overpic}
\\
\begin{overpic}[trim={1.1cm 1.1cm .8cm 1cm},clip,width=.4\textwidth]{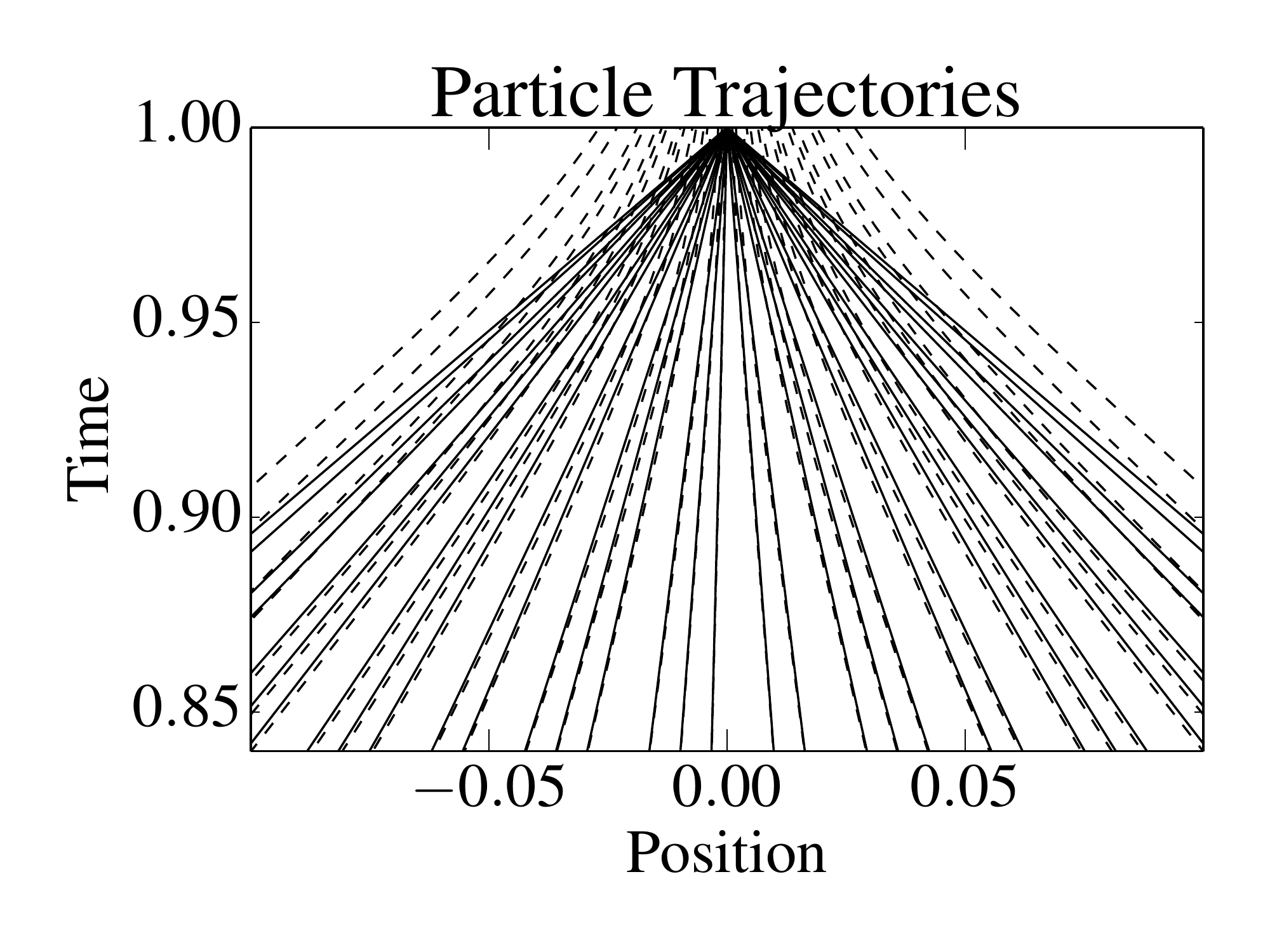}
\put(24.5,66.5){\small C.}
\end{overpic}
\begin{overpic}[trim={1cm 1.08cm .3cm .7cm},clip,width=.42\textwidth]{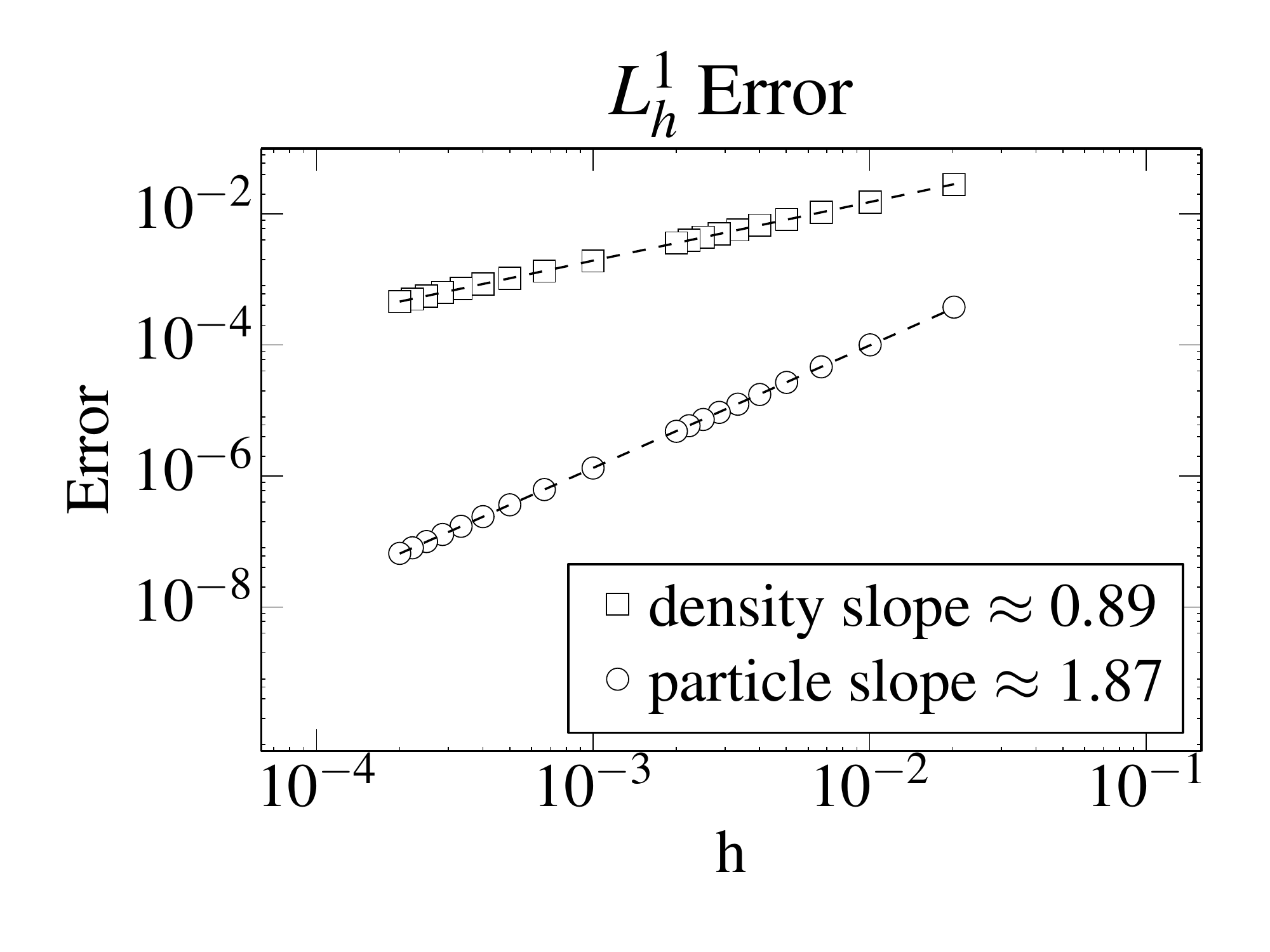}
\put(38,65){\small D.}
\end{overpic}

\end{centering}
\caption{A comparison of exact solutions (solid lines) and blob method solutions (dashed lines) with $\delta = h^q$ for $q=0.9$. The densities (B) are shown at  $t=0, 0.4,0.8$. The log-log plot (D) corresponds to $t=0.5$.}
\label{1dpatch}
\end{figure}
\quad \\ \noindent
\textbf{Discontinuous initial data:}  
Figure \ref{1dpatch} compares exact and blob method solutions to the one dimensional aggregation equation with discontinuous initial data\begin{align} \label{patchfnrho0}
\rho_0(x) = 1_{[-1,1]}(x) = \begin{cases} 1 &\mbox{if } |x|\leq 1 \ , \\ 
0  & \mbox{otherwise} \ . \end{cases}
\end{align} 
Though our convergence results only apply to sufficiently regular solutions to the aggregation equation, the definition of the blob method (Definition \ref{blob method def}) merely requires that initial data to be a compactly supported function.

We discretize the domain $[-1,1]$ using $h = 0.04$. At this level of resolution,  the approximate and exact particle trajectories are visually indistinguishable (A), though the approximate density becomes rounded at $t= 0.8$ (B), with a similar oscillating profile as the mollifier (\ref{both 1d mollifiers}).

Considering the particle trajectories on a smaller spatial scale (C), we again observe the trajectories bending away to avoid collision. As expected, a log-log plot of the $L^1_h$ error (D) reveals a slower rate of convergence than in Figure \ref{1dsmooth}. Unlike in the previous example, for which the slower rate of convergence was due to using a particle method instead of the blob method, in this example the slower rate of convergence is due to the lower regularity of the initial data.

\subsection{One dimension, Various potentials}

\begin{table}[h] 
 \centering
 \begin{tabular}{c|c}
&\hspace{.8cm} particle \hspace{2.4cm} $m=4$  \hspace{2.4cm} $m=6$ \\
 \hline
\rotatebox{90}{\makebox(0,0)[lc]{~\parbox{3.2cm}{
  $K(x) = -\log |x|/2\pi$}}} & \includegraphics[trim={.9cm 1.3cm 1cm .5cm},clip,width=.35\textwidth]{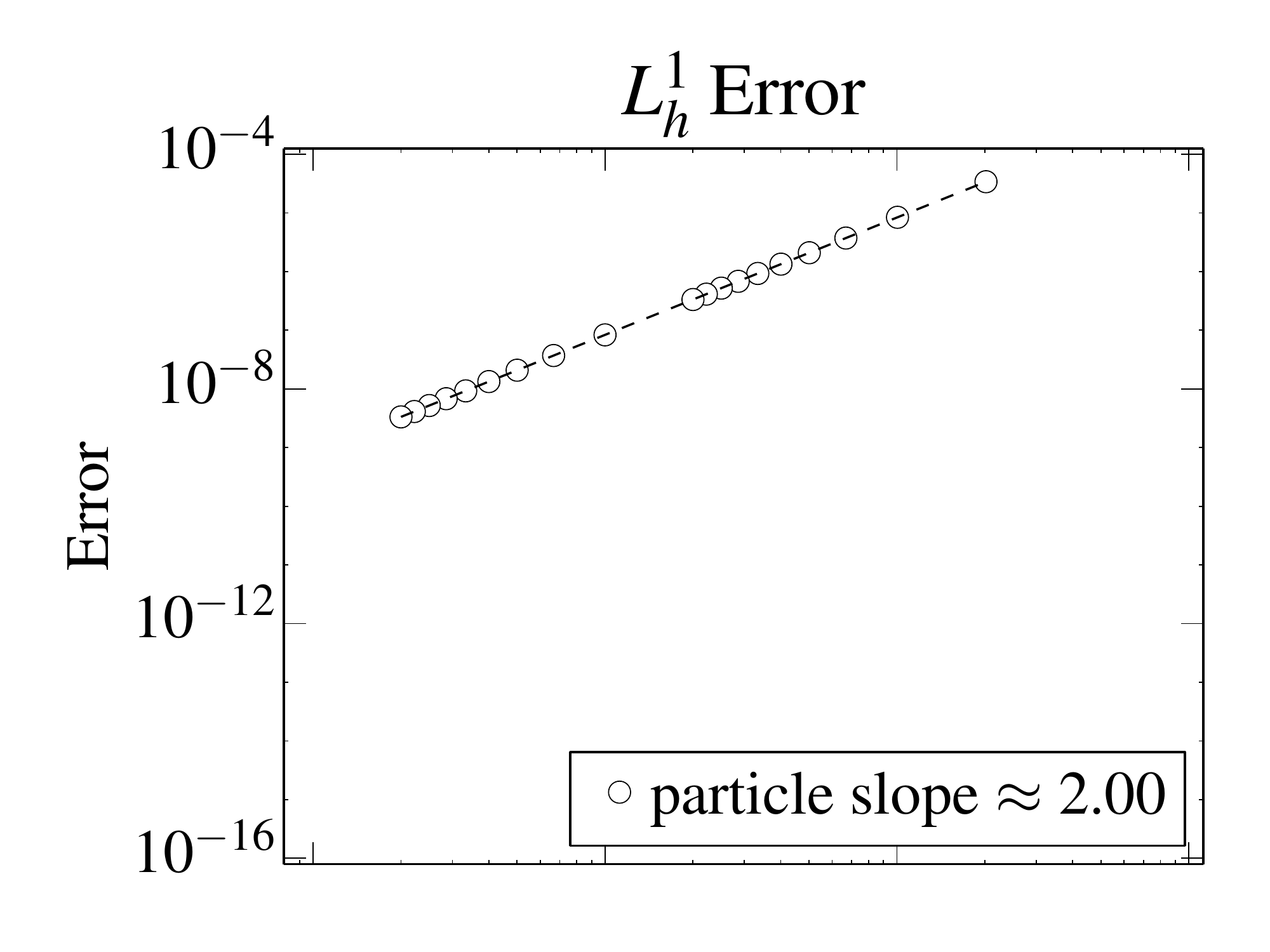} \hspace{-2.7mm} \includegraphics[trim={4.6cm .8cm 1cm .7cm},clip,width=.283\textwidth]{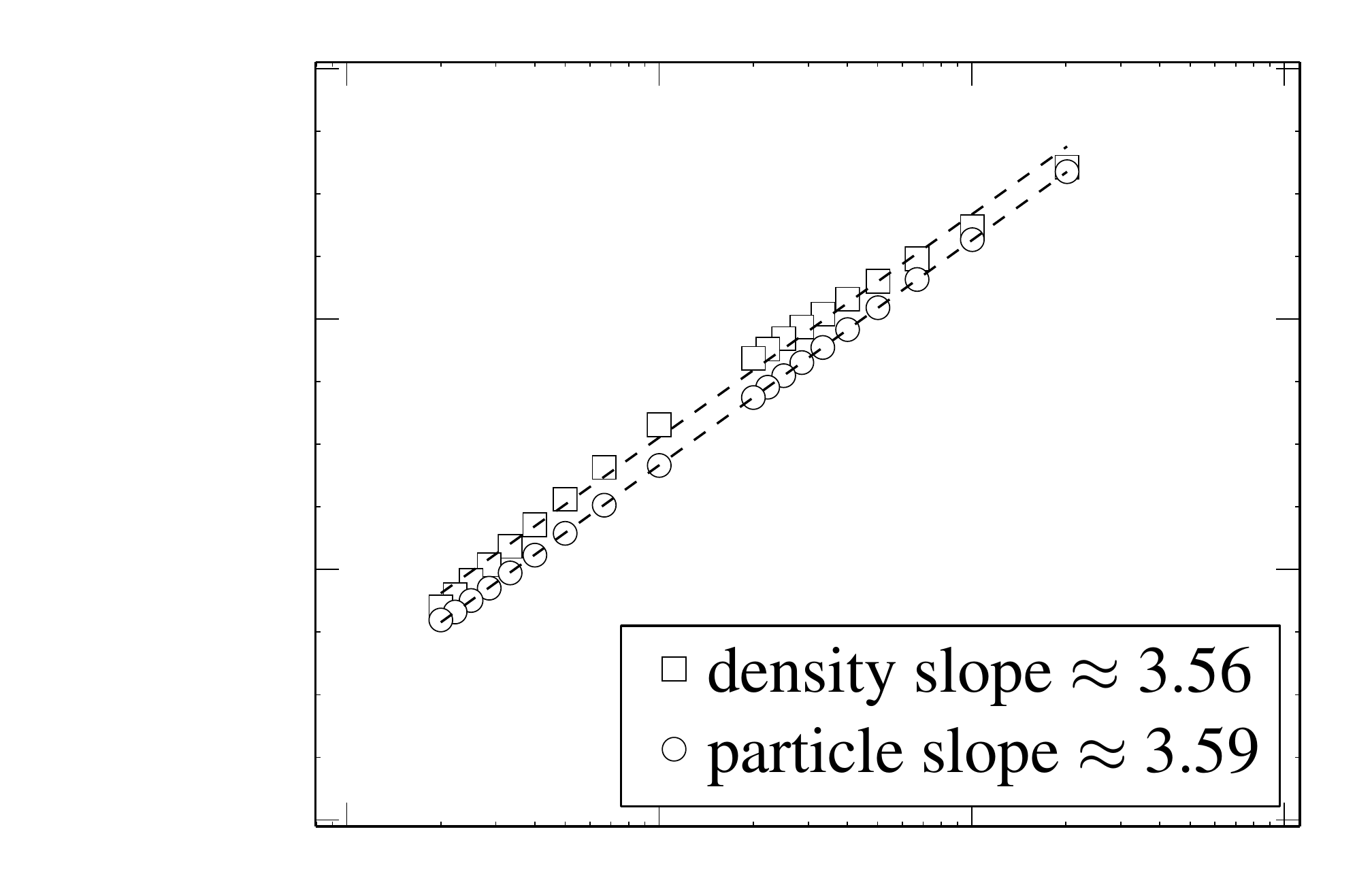} 
 \hspace{-2.8mm}  \includegraphics[trim={4.15cm .8cm 1cm .7cm},clip,width=.282\textwidth]{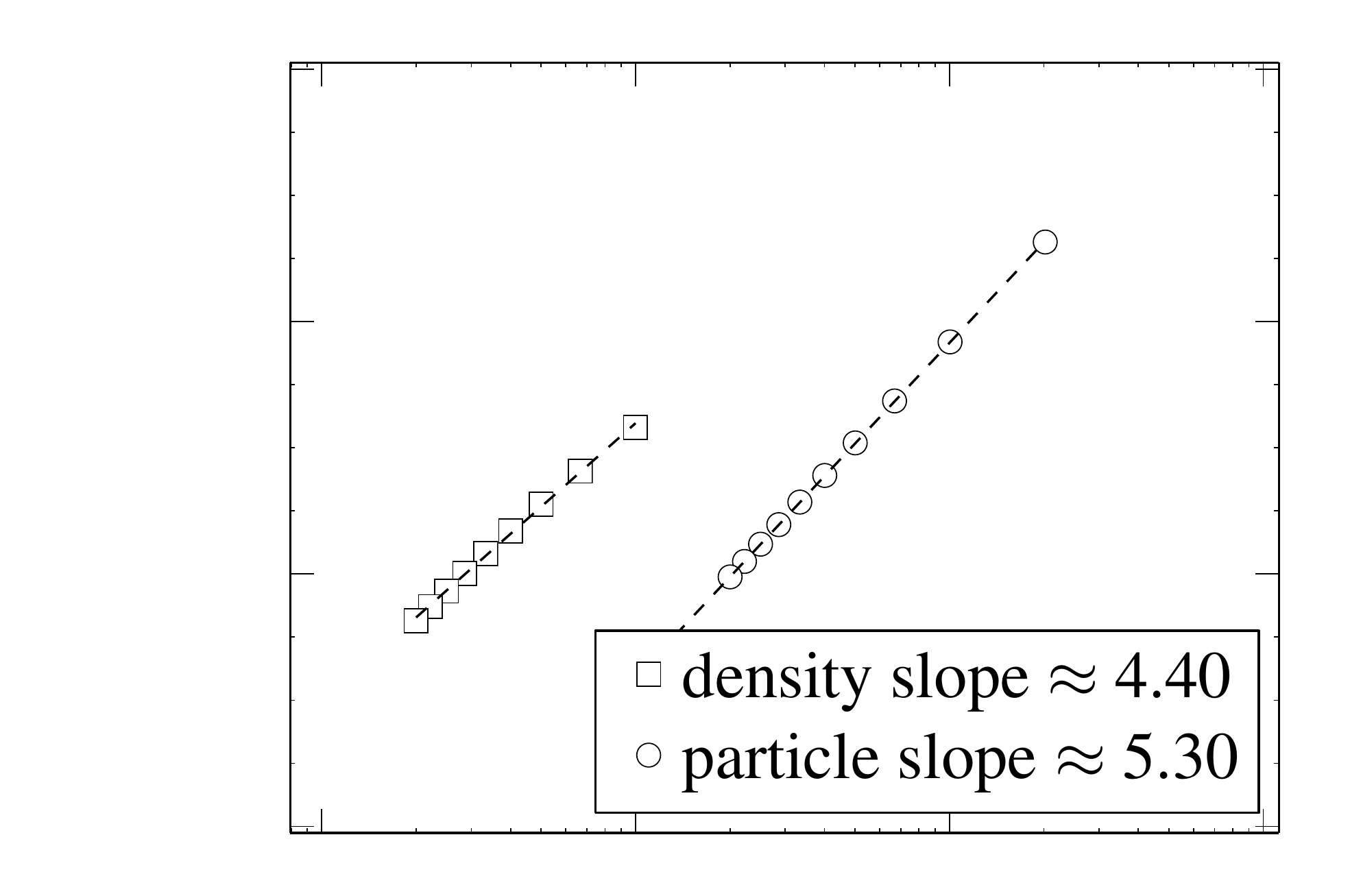}  \\
\hline
\parbox[t]{3mm}{\multirow{1}{*}{\rotatebox[origin=c]{90}{$K(x) =|x|^3/3$ \ }}}& \vspace{-3mm} \\ 
& \includegraphics[trim={-2.1cm 1.2cm 1cm .8cm},clip,width=.355\textwidth]{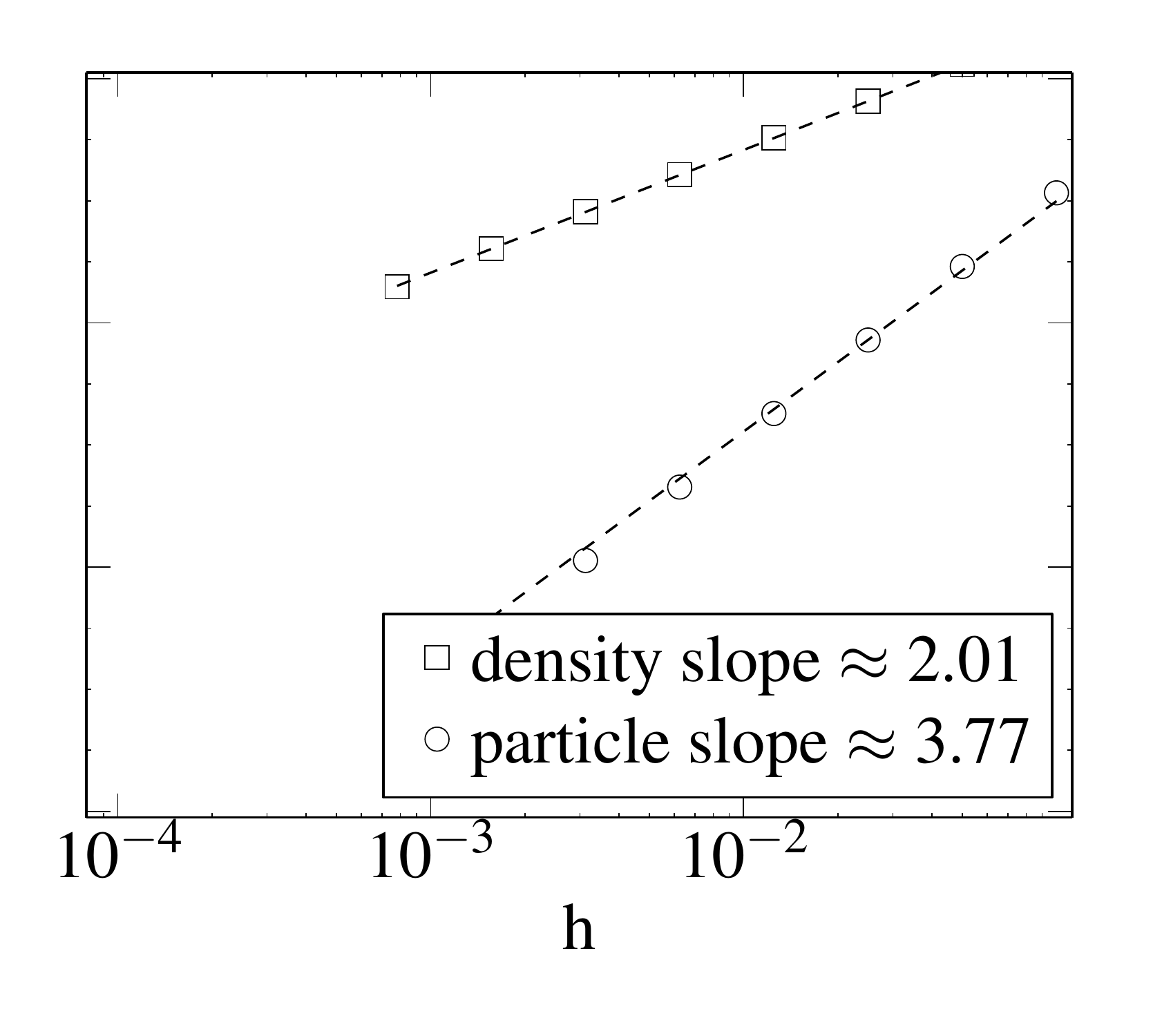} \hspace{-2.9mm} \includegraphics[trim={4.6cm -1.15cm 1cm .8cm},clip,width=.277\textwidth]{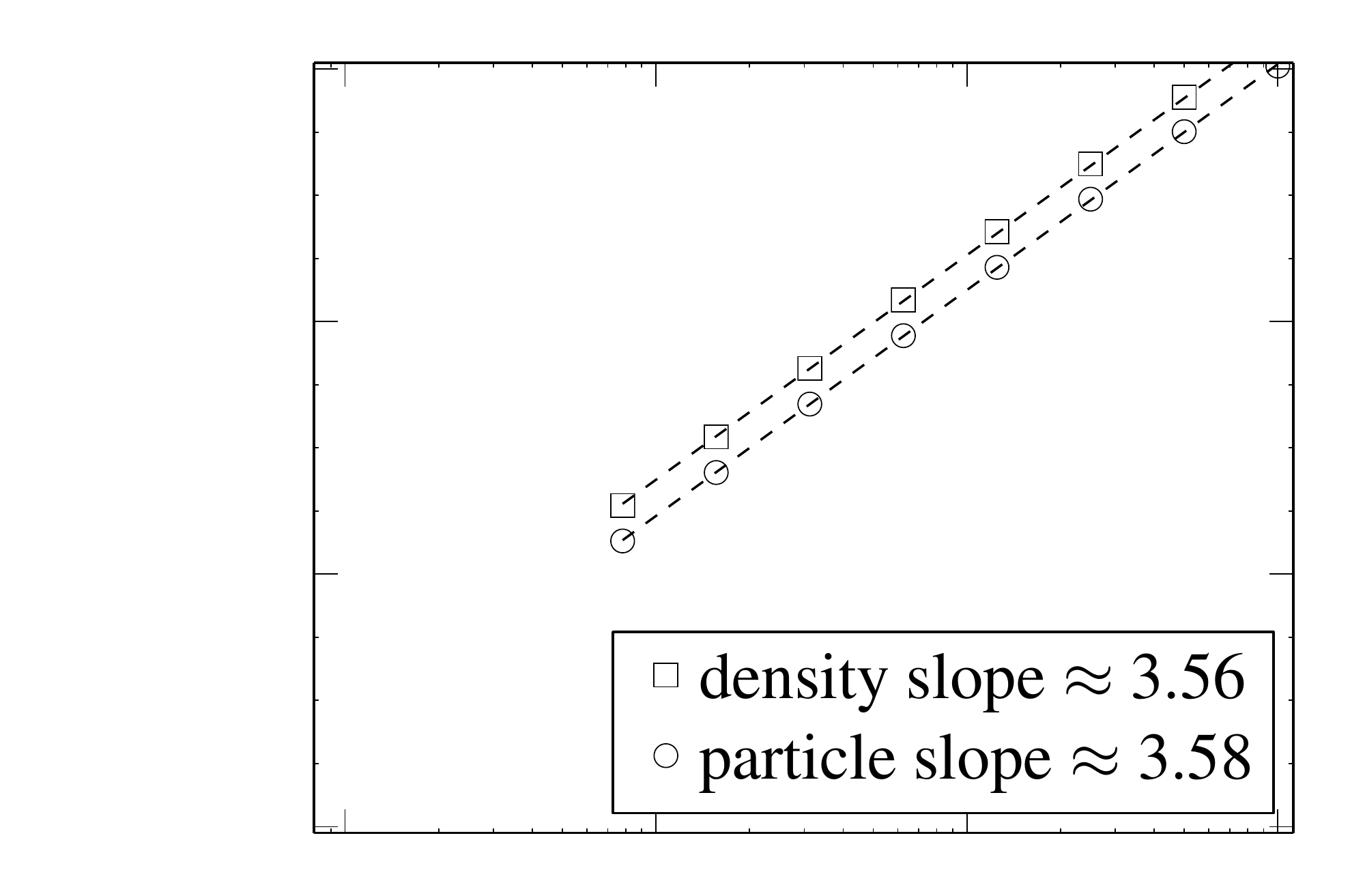} 
 \hspace{-2.7mm}  \includegraphics[trim={4.15cm -1.15cm 1cm .8cm},clip,width=.281\textwidth]{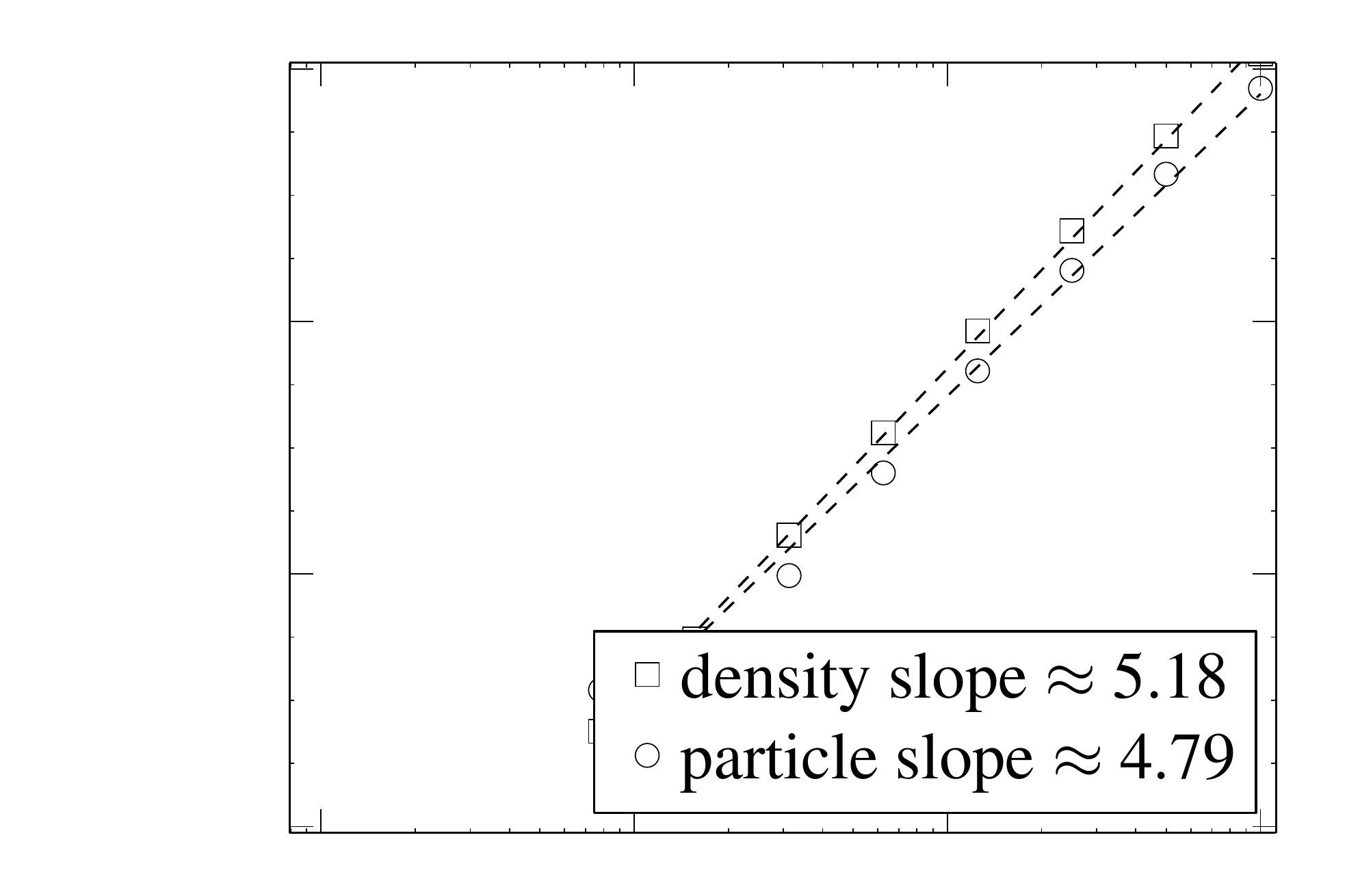}
\end{tabular} 
\captionof{figure}{A comparison of the $L^1_h$ error at $t=0.5$ for various kernels and numerical methods. For $K = (-\Delta)^{-1}$, we use analytic expressions for $\grad K *\psi_\delta$ and $\Delta K * \psi_\delta$. For the cubic potential, we compute the convolutions using a fast Fourier transform in radial coordinates on a ball of radius 2.5 with $5 \times 10^5$ grid points (for $m=4$) and $2 \times 10^6$ grid points (for $m=6$).}
\label{variouspotentialsloglog}
\end{table}

Figure \ref{variouspotentialsloglog} compares the rate of convergence of numerical solutions when $K = (-\Delta)^{-1}$ or $K(x) = |x|^3/3$. The mollifiers (\ref{both 1d mollifiers}) are of order $m=4$ and $m =6$. The scaling of the regularization is $\delta = h^q$ for $q = 0.9$, and the initial data is
\begin{align} \label{pfn10rho0}
\rho_0(x) = \begin{cases} (1-x^2)^{10} &\mbox{if } |x|\leq 1 \ , \\ 
0  & \mbox{otherwise} \ . \end{cases}
\end{align}

The more singular the kernel, the greater improvement we see in using the blob method over a particle method.
For the negative Newtonian potential, the $m=4$ blob method improves upon the particle method, and the $m=6$ blob method shows even greater improvement. For the cubic potential, the particle method is better than the $m=4$ blob method for the  trajectories, but not for the density. The $m=6$ blob method is best for both trajectories and density. 

For both kernels, the rates of convergence for the $m=4$ blob method are very close to the theoretically predicted rate of $mq = 3.6$, while the rates of convergence for the $m=6$ blob method are not as good as the theoretically predicted rate of $mq = 5.4$, due to other sources of error in the numerical implementation.

\subsection{Two Dimensions}
 \begin{table}[h] 
 \centering
 \begin{tabular}{cc|c}
 $\rho_0(x) = (1-x^2)_+^{2}$&  & $\rho_0(x) = 1_{[-1,1]}$ \\
\vspace{-3mm}& & \\
 $m=4$ & particle& $m=4$  \\
 \hline
\hspace{-5mm}
\includegraphics[trim={.5cm .7cm 1cm .5cm},clip,width=.35\textwidth]{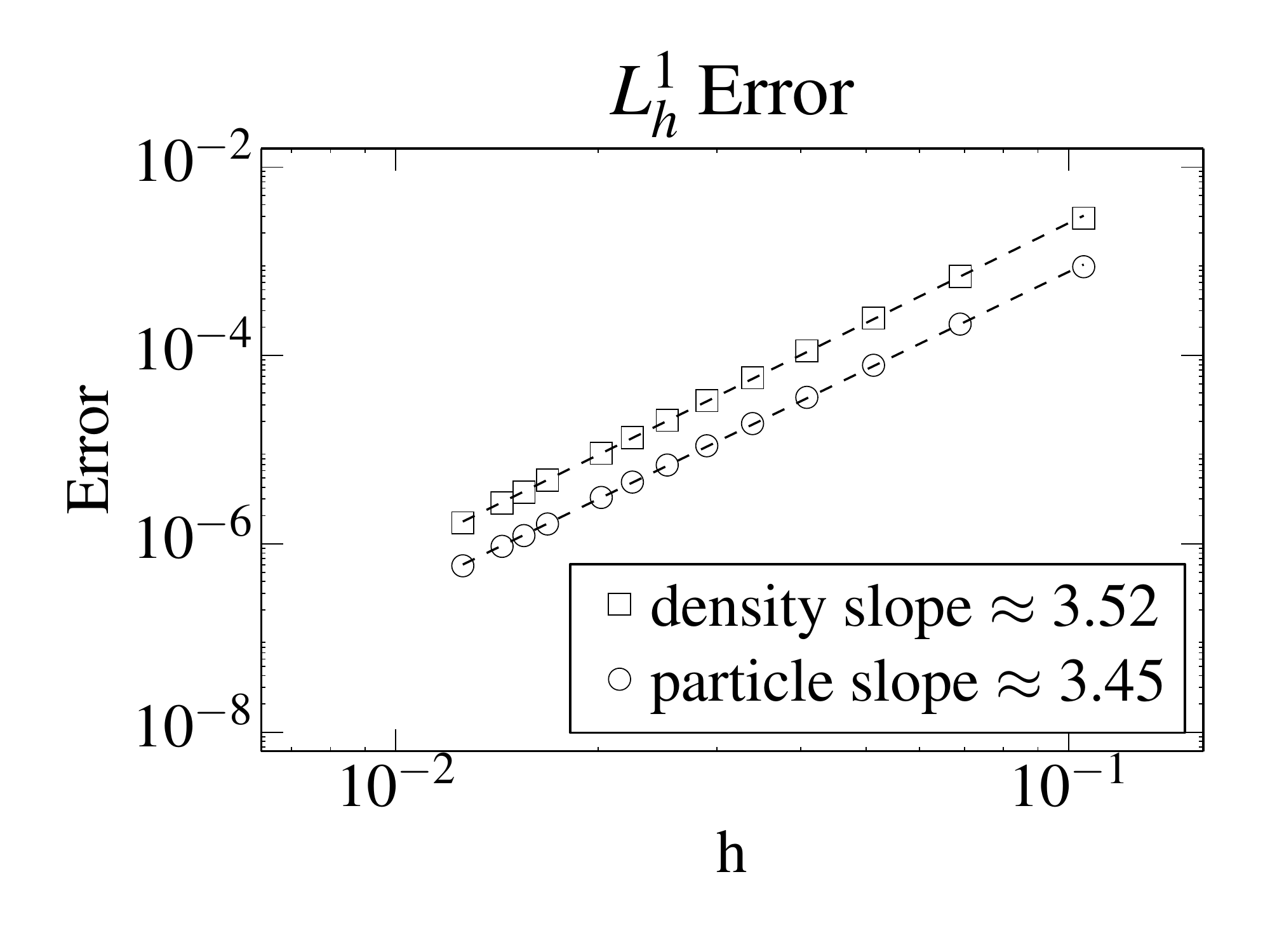} & \hspace{-4mm}
\includegraphics[trim={1cm -1.5cm .9cm .5cm},clip,width=.31\textwidth]{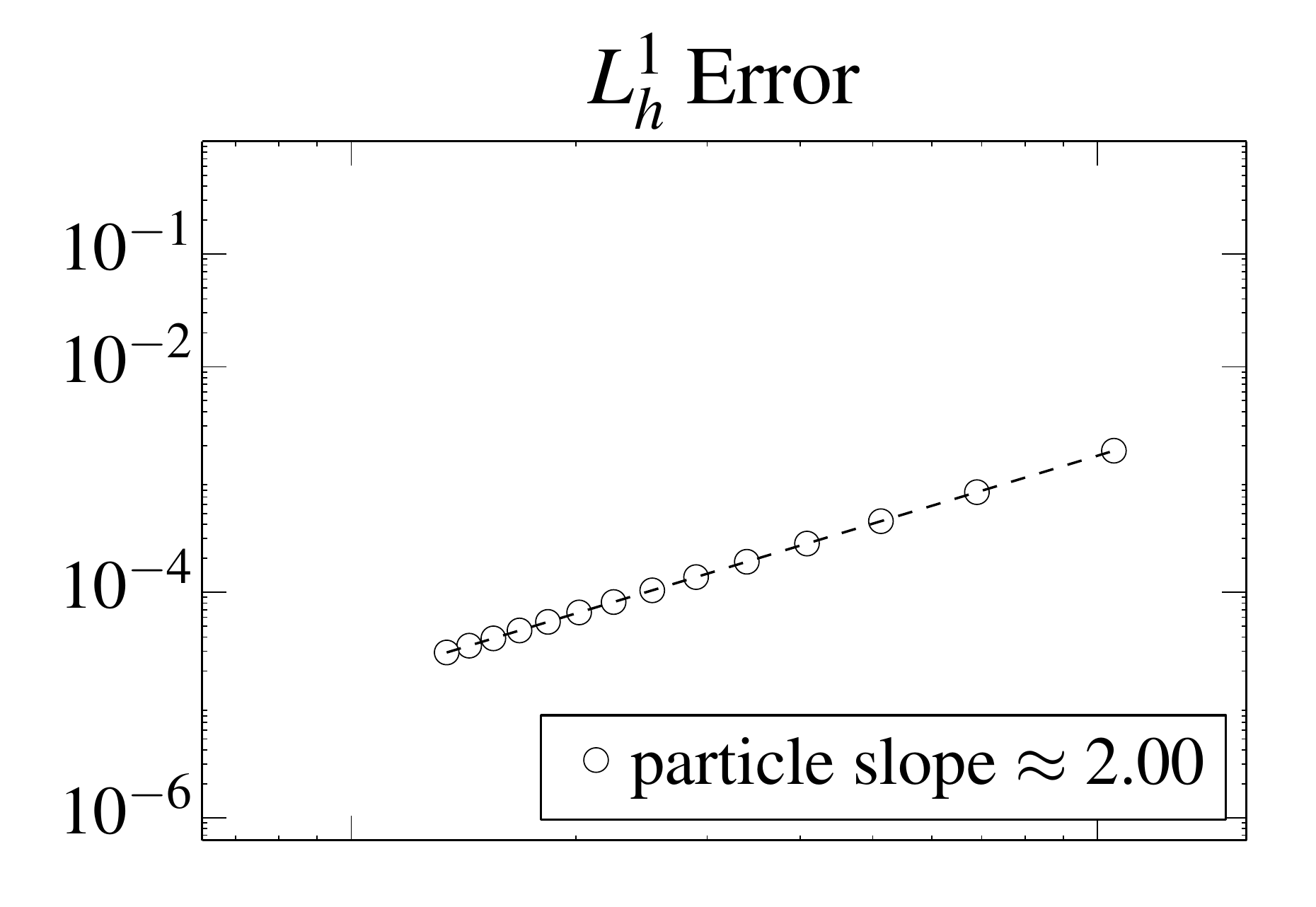} 
& \hspace{-2mm} \includegraphics[trim={2.1cm -1.8cm .9cm .7cm},clip,width=.303\textwidth]{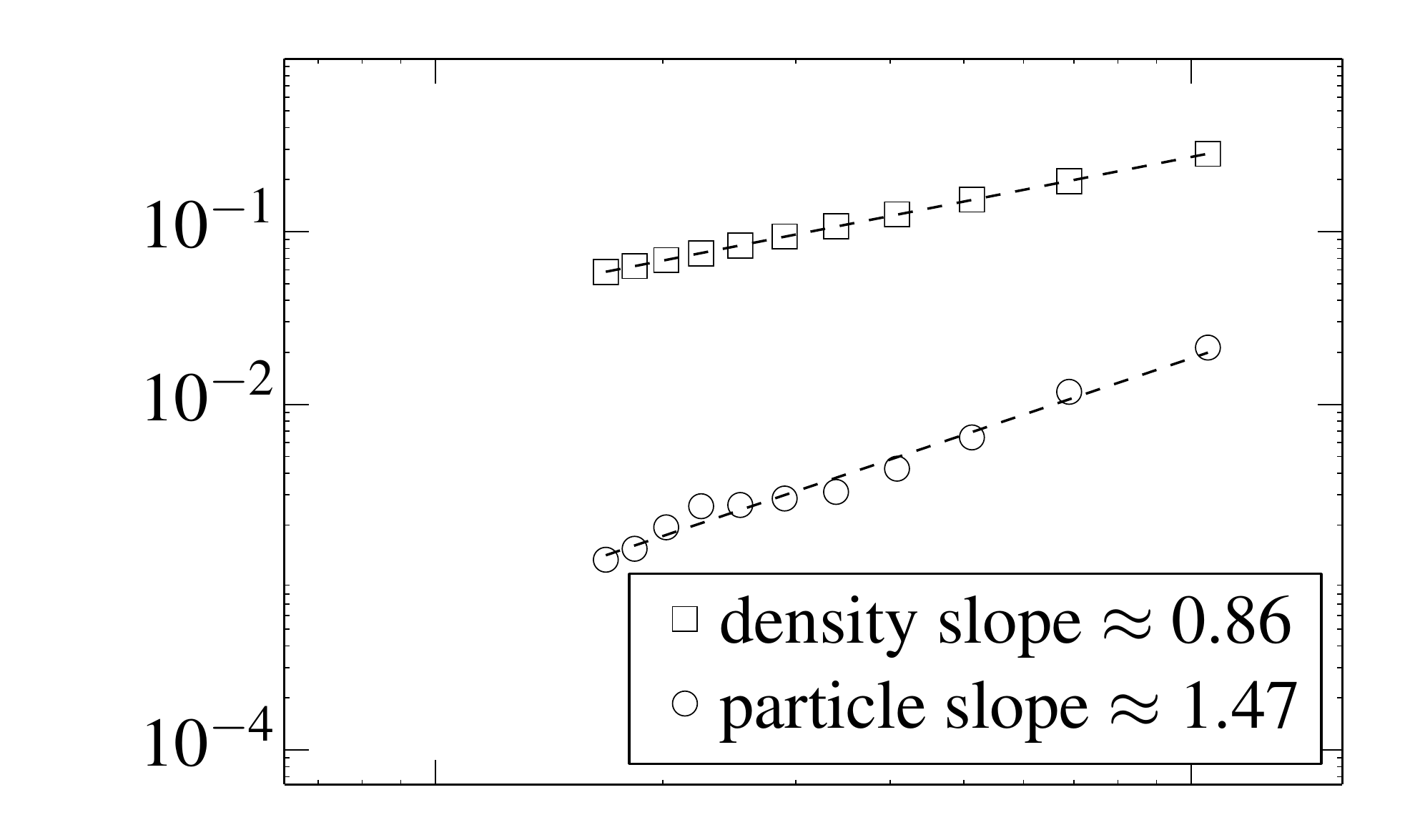} 
\end{tabular}
\captionof{figure}{A comparison of $L^1_h$ error at $t = 0.5$ for the two dimensional aggregation equation when $K = (\Delta)^{-1}$.} \label{2dNewtloglog}
\end{table}

\quad \\ \noindent
\textbf{Newtonian potential:}
Figure \ref{2dNewtloglog} compares the rate of convergence of numerical solutions when $K = (\Delta)^{-1}$ for various choices of initial data. In the first and third plot, the numerical solution is computed via the blob method with a mollifier (\ref{2d mollifier}) of order $m =4$ and $\delta = h^q$ for $q = 0.9$.  In the second plot, the numerical solution is computed via a particle method.

As in the one dimensional case, we see the best rates of convergence for the blob method applied to regular initial data (left). This agrees with our theoretically predicted rate of $mq = 3.6$. The rates of convergence for a particle method applied to regular initial data (middle) and the blob method applied to discontinuous initial data (right) are slower.

 \begin{table}[h]
 \centering
 \begin{tabular}{c|c}
   &\hspace{-.8cm} $K(x) = \log |x|/2\pi \hspace{1.7cm} K(x) = |x|^2/2 \hspace{2cm} K(x) = |x|^3/3 $\\
 \hline 
 &\\
  \parbox[t]{1mm}{\multirow{1}{*}{\rotatebox[origin=c]{90}{regular initial data}}}
 & \hspace{-2mm}
 \includegraphics[trim={4cm 1.75cm 3.2cm 2.9cm},clip,width=.31\textwidth]{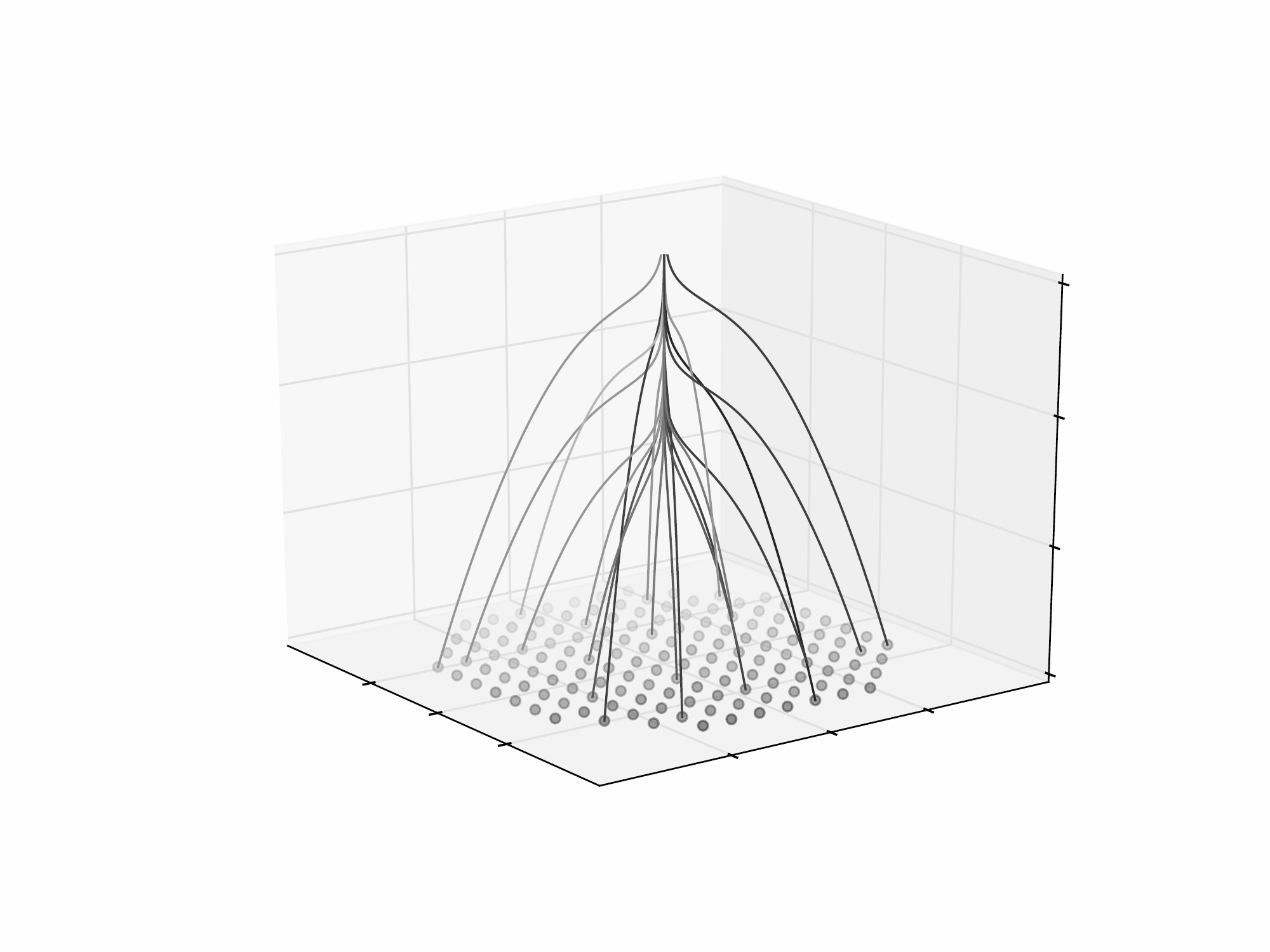}\hspace{-3mm}
\includegraphics[trim={4cm 1.75cm 3.2cm 2.9cm},clip,width=.31\textwidth]{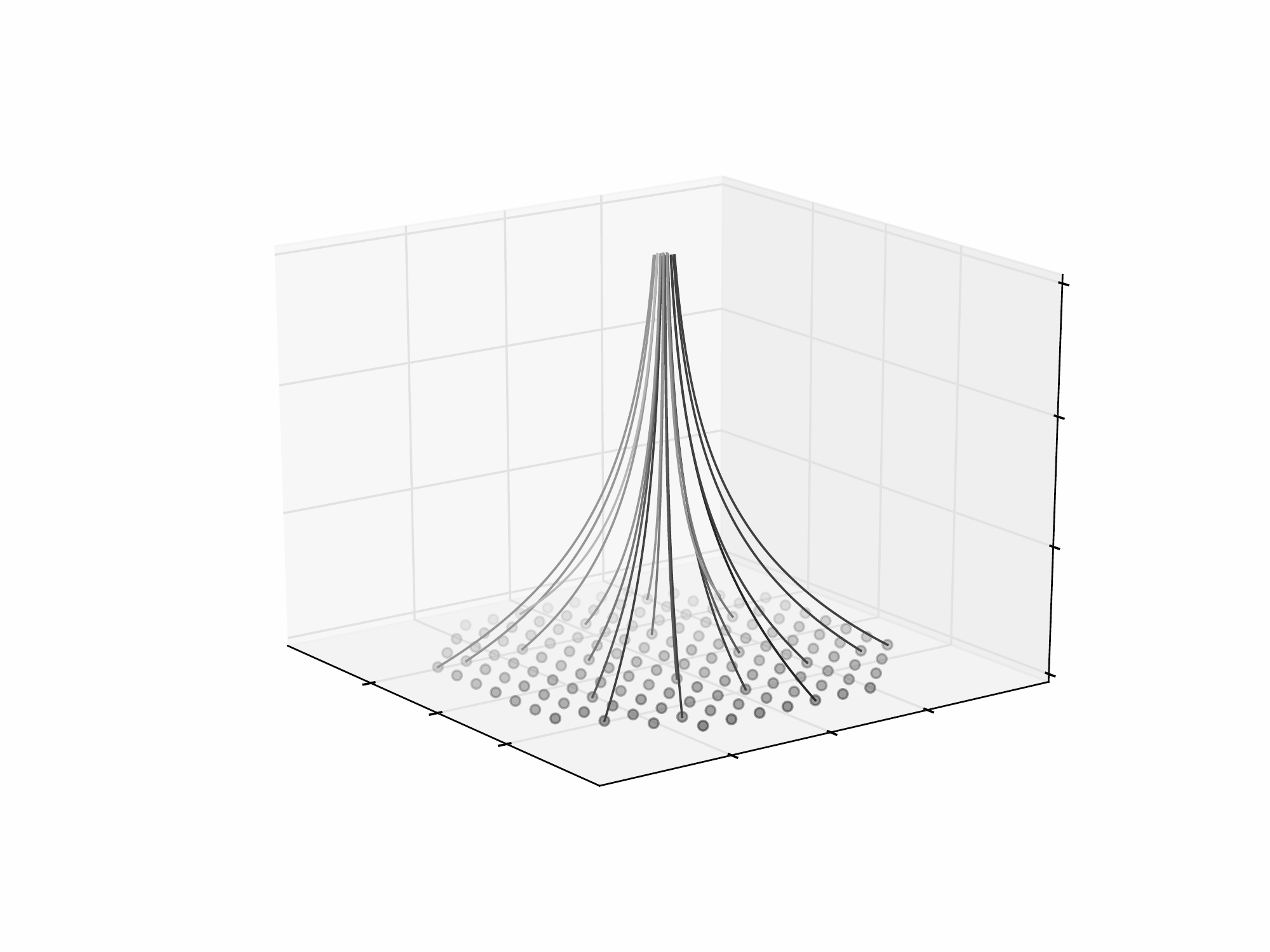}\hspace{-3mm}
\includegraphics[trim={4cm 1.75cm 1.5cm 2.75cm},clip,width=.34\textwidth]{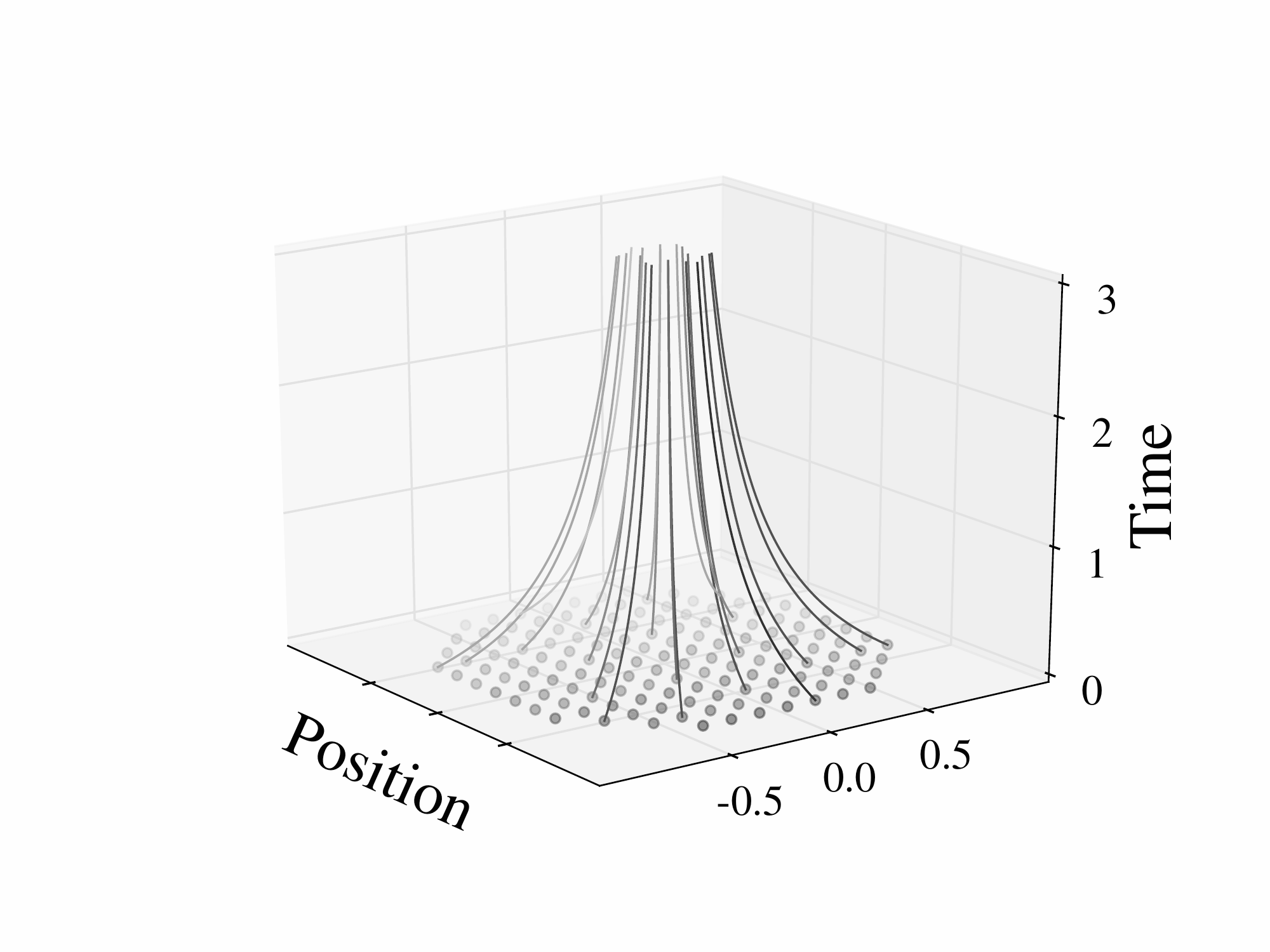}\\
 & \hspace{-2mm}
\includegraphics[trim={4cm 1.75cm 3.2cm 2.9cm},clip,width=.31\textwidth]{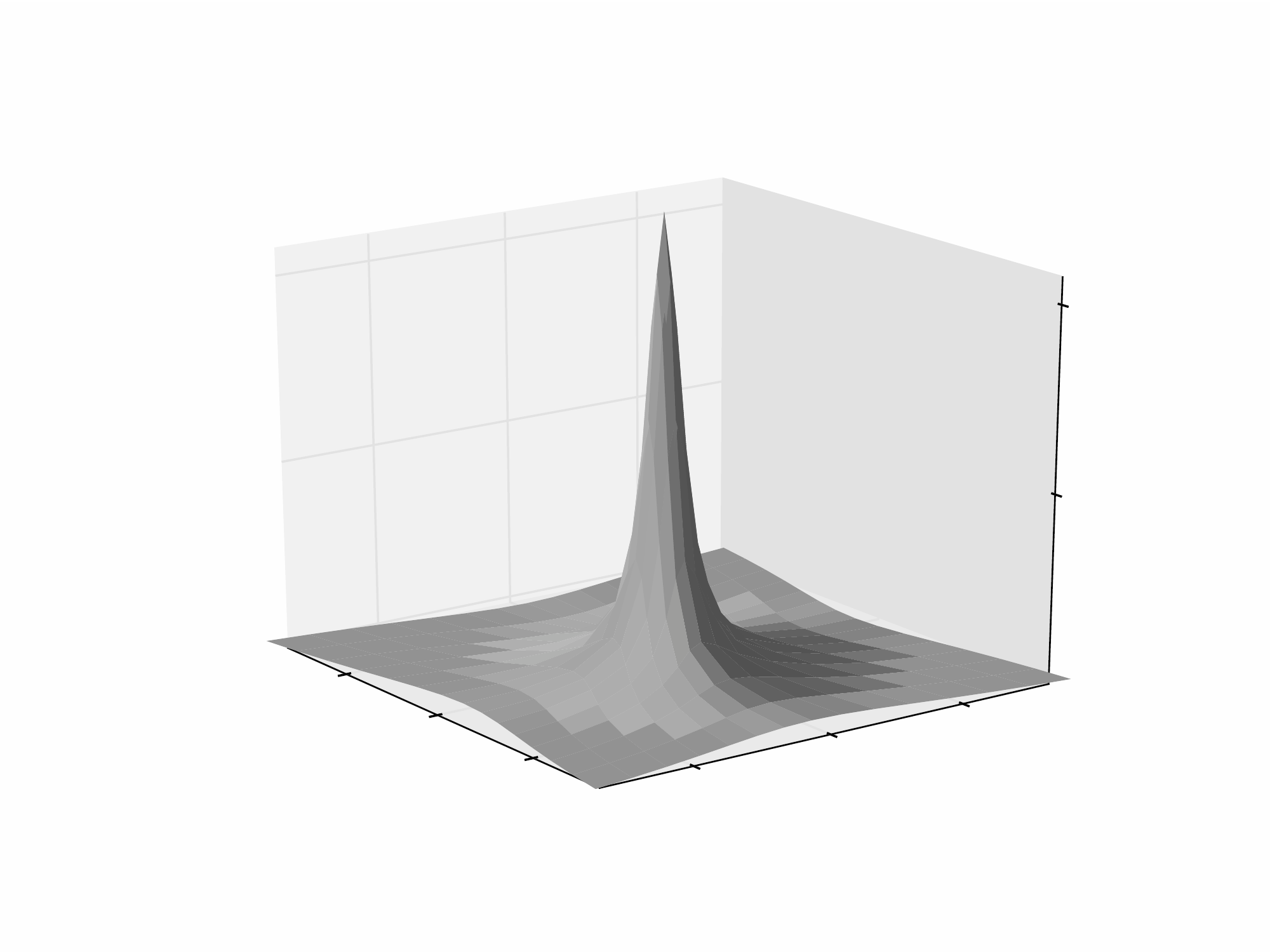}
\hspace{-3mm}
\includegraphics[trim={4cm 1.75cm 3.2cm 2.9cm},clip,width=.31\textwidth]{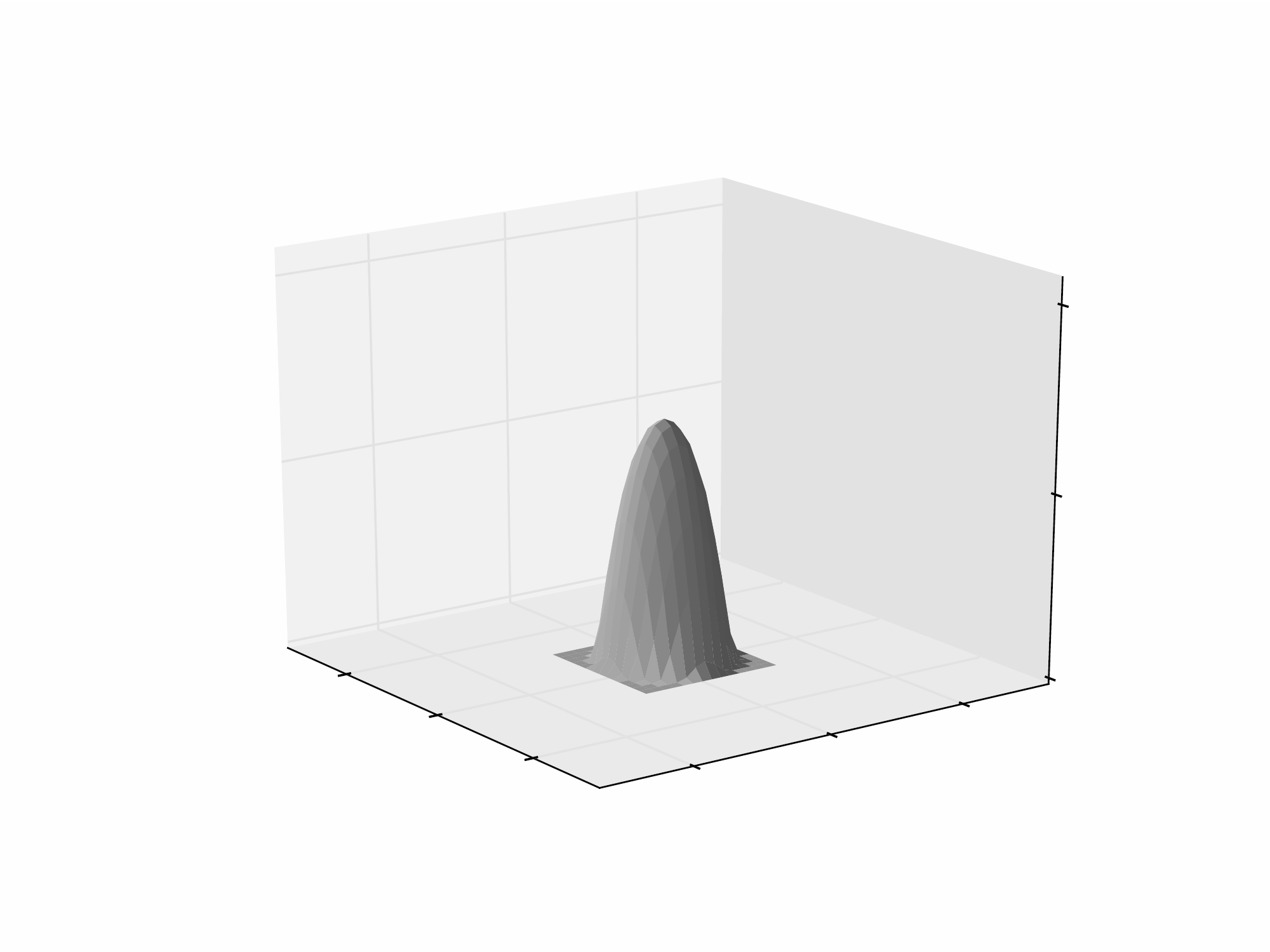} \hspace{-3mm}
\includegraphics[trim={4cm 1.75cm 1.5cm 2.75cm},clip,width=.34\textwidth]{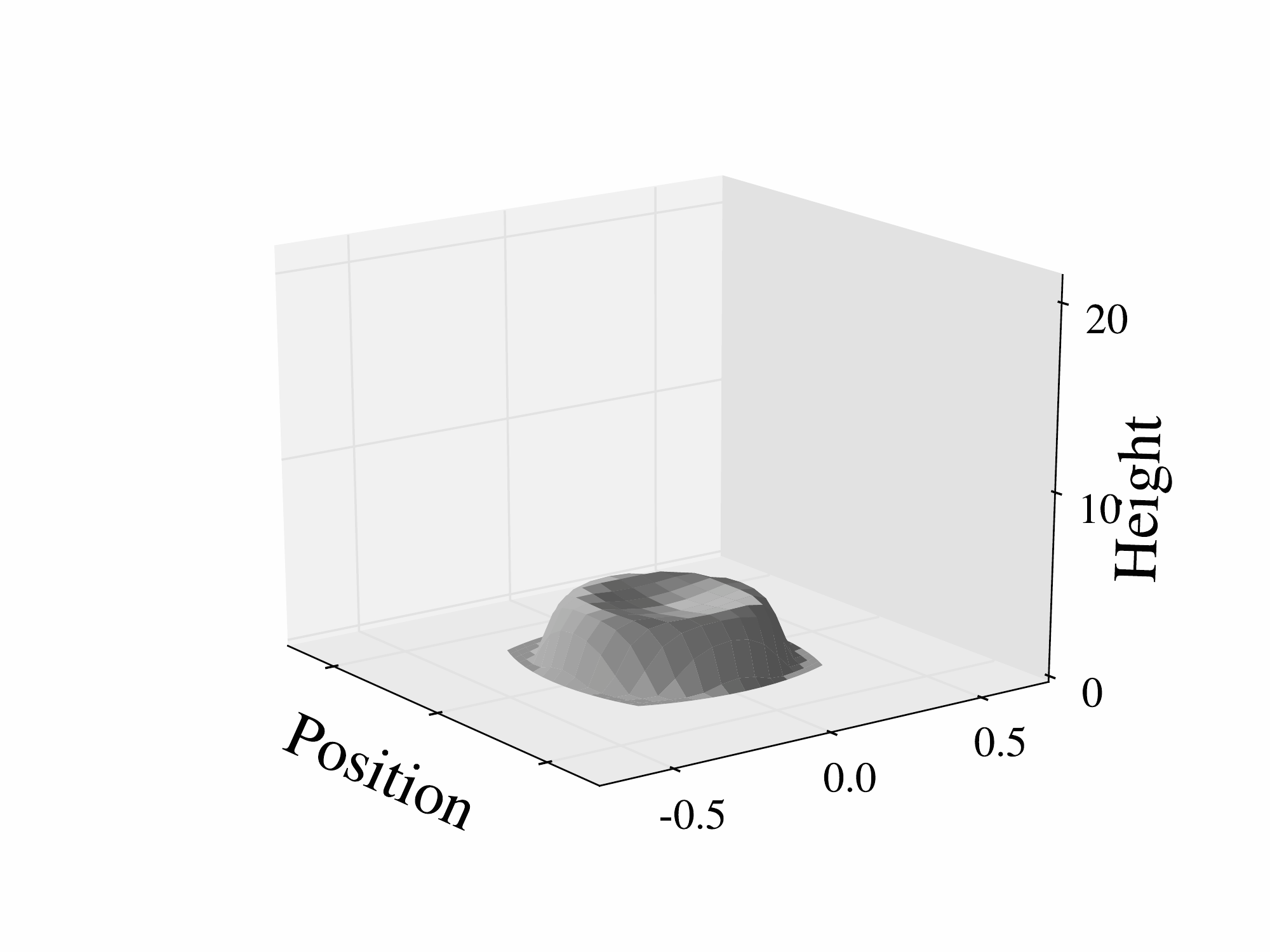}\\
\hline
& \\
 \parbox[t]{1mm}{\multirow{1}{*}{\rotatebox[origin=c]{90}{discontinuous initial data}}} & \hspace{-2mm}
\includegraphics[trim={4cm 1.75cm 3.2cm 2.9cm},clip,width=.31\textwidth]{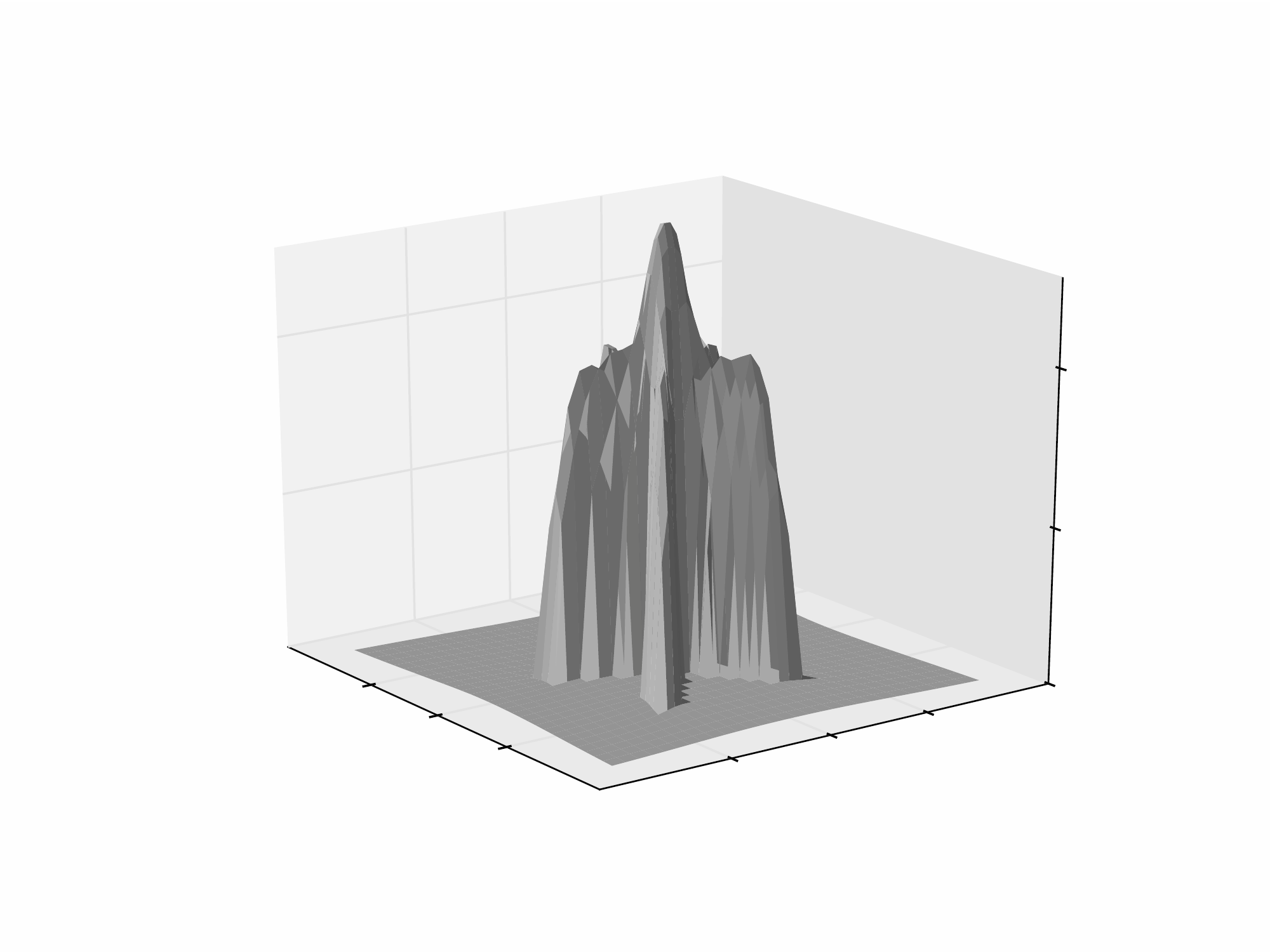}\hspace{-3mm}
\includegraphics[trim={4cm 1.75cm 3.2cm 2.9cm},clip,width=.31\textwidth]{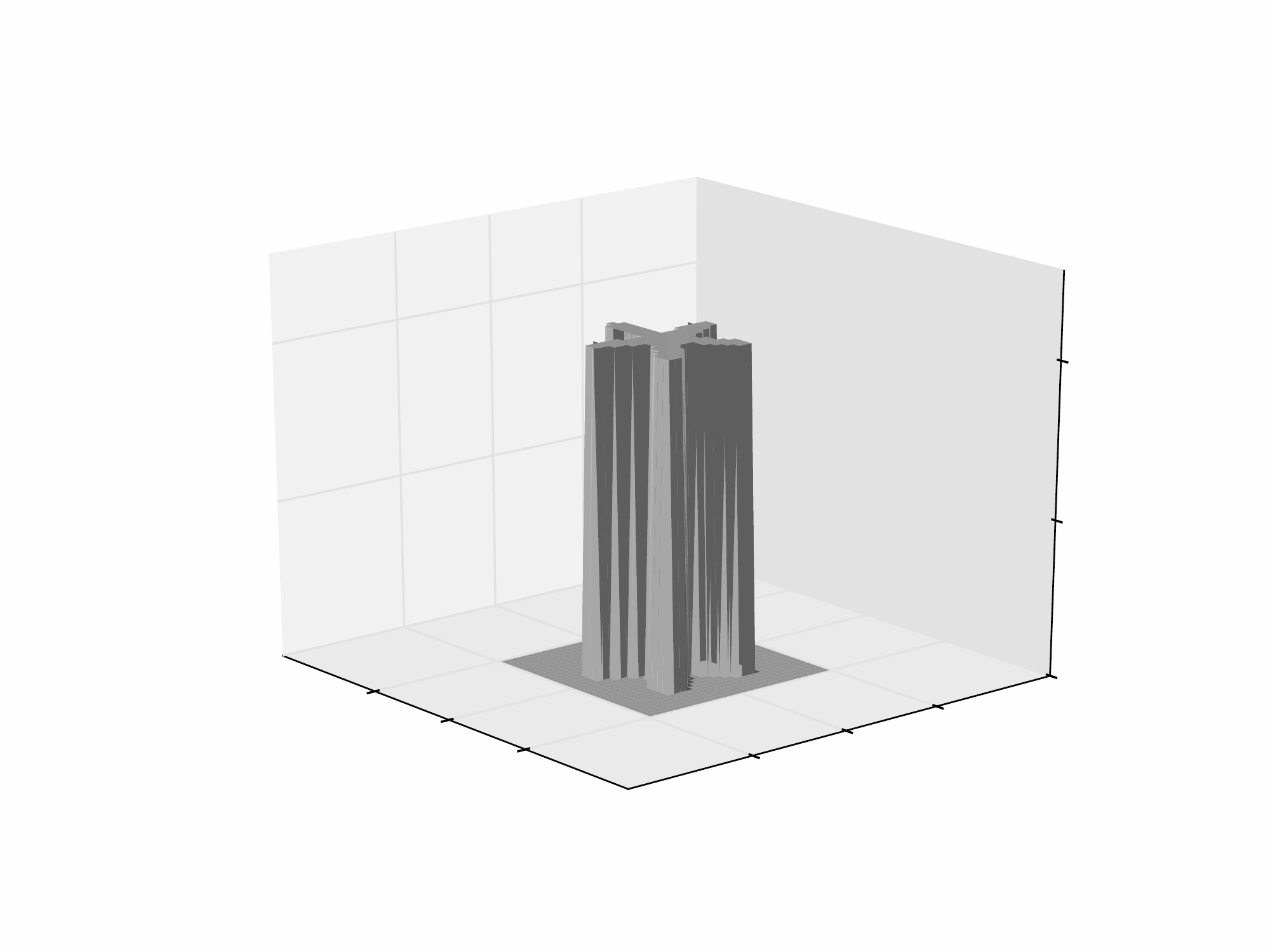} \hspace{-3mm}
\includegraphics[trim={4cm 1.75cm 1.5cm 2.75cm},clip,width=.34\textwidth]{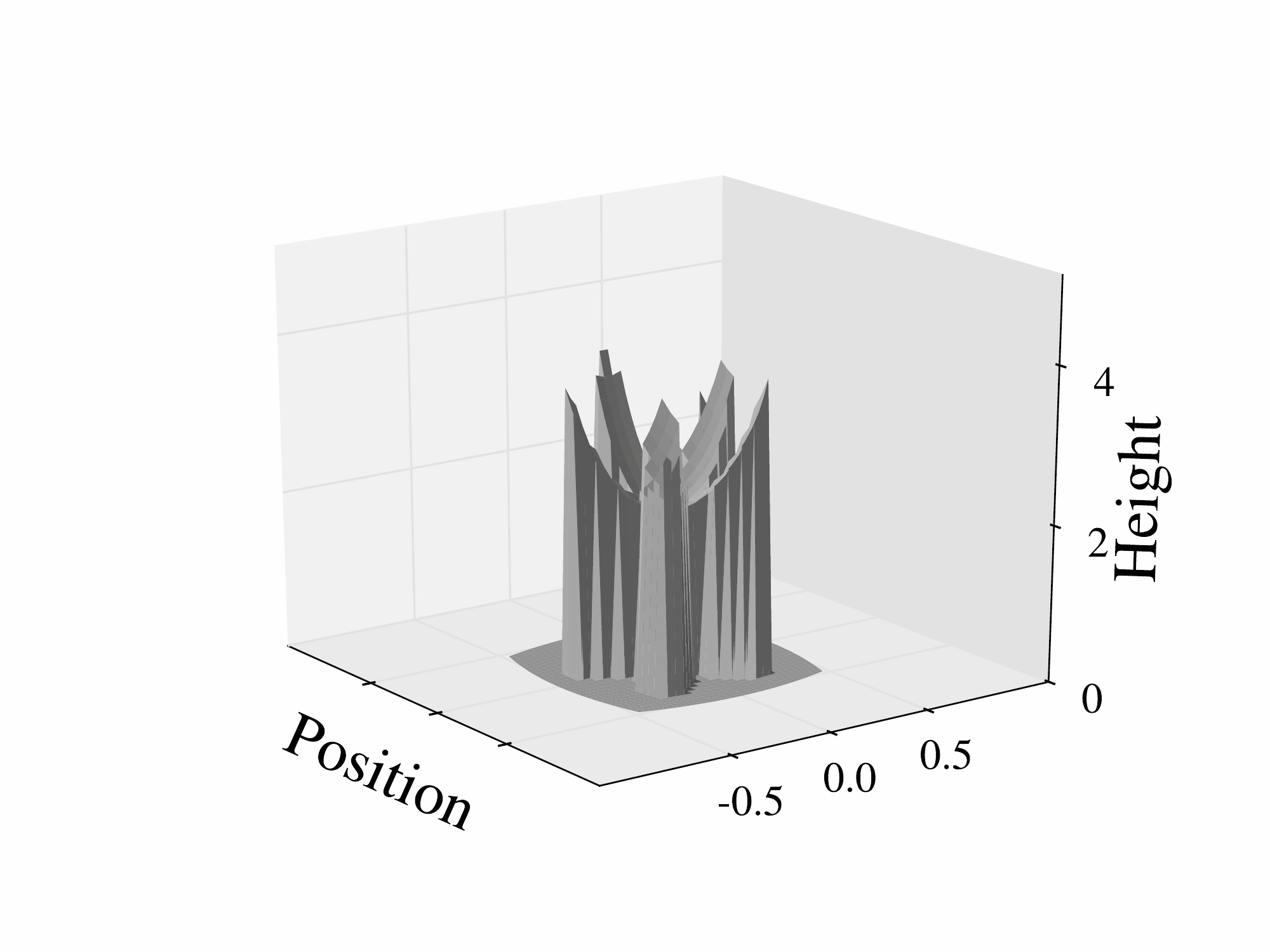} \\
  & \hspace{-2mm}
\includegraphics[trim={6.5cm 4.25cm 5.25cm 4.25cm},clip,width=.31\textwidth]{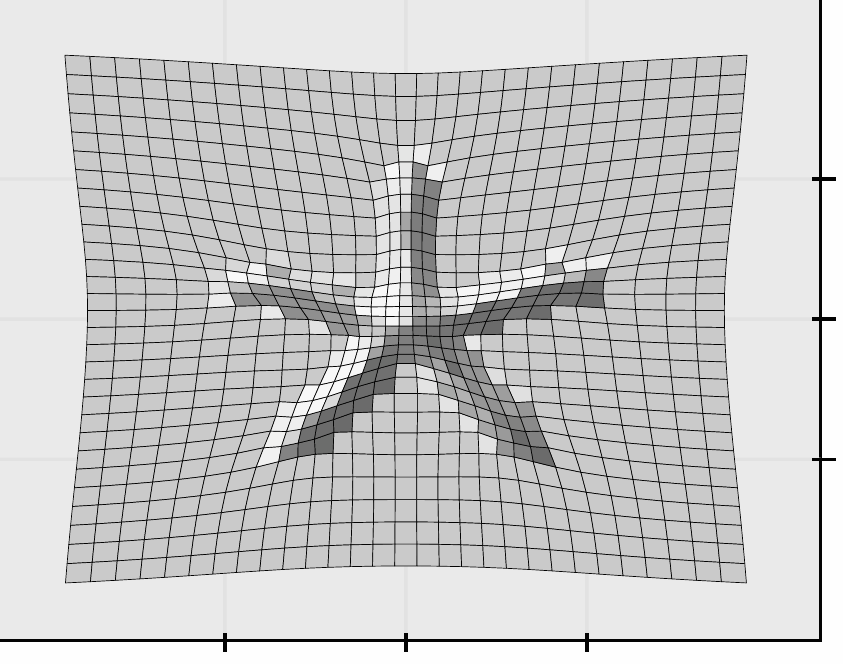}\hspace{-3mm}
\includegraphics[trim={6.5cm 4.25cm 5.25cm 4.25cm},clip,width=.31\textwidth]{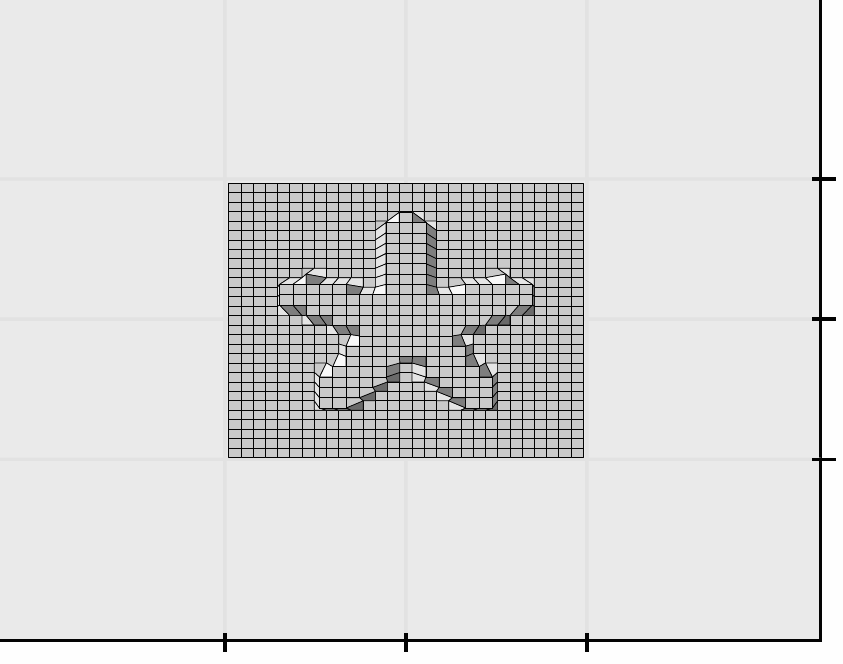}\hspace{-3mm}
\includegraphics[trim={6.5cm 4.25cm 5.25cm 4.25cm},clip,width=.31\textwidth]{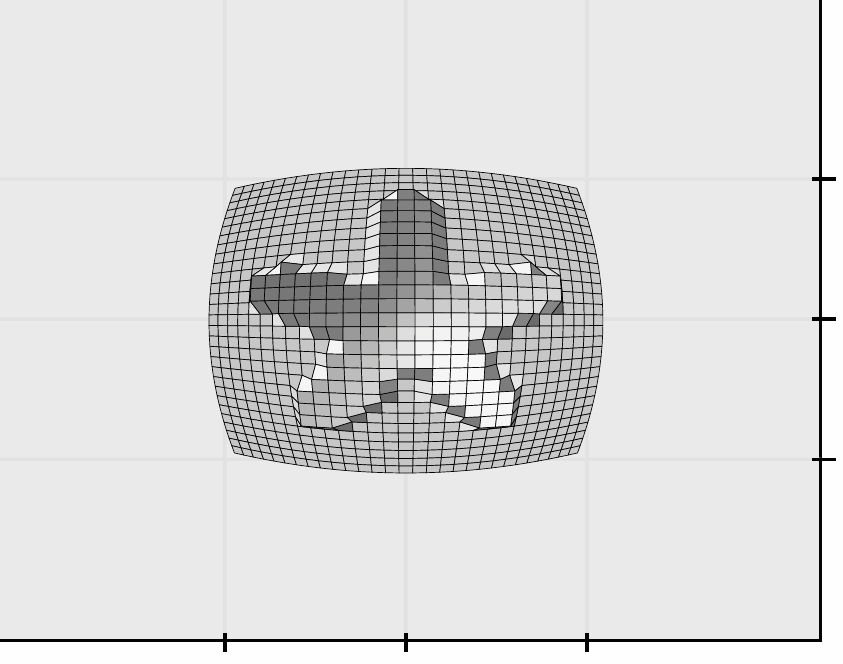} \\
& \\
\hline
 \end{tabular}
\captionof{figure}{A comparison of blob method solutions for the two dimensional aggregation equation. The scaling of the regularization is $\delta = h^q$ for $q=0.9$, and the mollifier (\ref{2d mollifier}) satisfies $m =4$.  In the top six plots, the unit square is discretized using $h \approx 0.13$. In the bottom six plots, the unit square is discretized using $h \approx 0.07$.} \label{2D}
 \end{table}
\quad \\ \noindent
\textbf{Newtonian, quadratic, and cubic potentials:} Figure \ref{2D} illustrates various phenomena of blob method solutions for three choices of kernel (Newtonian, quadratic, and cubic) as well as two choices of initial data (regular and discontinuous).

The top six plots illustrate the behavior of solutions with smooth, compactly supported initial data,
\begin{align*}
\rho_0(x) = \begin{cases} e^{1/(x^2-1)} &\mbox{if } |x|\leq 1 \ , \\ 
0  & \mbox{otherwise} \ . \end{cases}
\end{align*} 
The first row shows the space-time trajectories of twenty particles, and the second row shows the density at time $t= 1.4$. For $K= (\Delta)^{-1}$, finite time blowup of the classical solution occurs at $t = 1/\rho_0(0) \approx 1.2$, and the particle trajectories become very close at this time, bending to avoid collision. Two subsequent near-collisions occur before $t =3.2$. For $K(x) = |x|^2/2$, the particles converge to a point in infinite time, while for $K(x) = |x|^3/3$, the particles converge to a ring in infinite time. Each of these phenomena is reflected in the evolution of the density.

The second two rows consider discontinuous initial data given by the characteristic function on a star shaped patch,
\begin{align} \label{starrho0}
\rho_0(r, \theta) = \begin{cases} 1 &\mbox{if } r< \left(\sin^2(\frac{5\theta}{2})+\frac{1}{2}\right)/4 \ , \\ 
0  & \mbox{otherwise} \ . \end{cases}
\end{align} 
All six plots show the approximate density at $t = 0.8$, either from the side or above. For $K = (\Delta)^{-1}$, the density exhibits the same rounding due to regularization as in the one dimensional case. For $K(x)=|x|^2/2$, there is no rounding due to regularization since convolution with a mollifier of accuracy $m=4$ preserves polynomials of degree less than 4, including $\grad K$ and $\Delta K$. For $K(x) = |x|^3$, the density again converges to a ring in infinite time.

\quad  \\ \noindent
 \begin{table}[h]
 \centering
 \begin{tabular}{c|c|c}
 \hspace{-3mm}  $K(x) = \frac{|x|^4}{4} - \frac{\log |x|}{2\pi}$ & \hspace{-2mm} $K(x) =\frac{|x|^4}{4} - \frac{|x|^{3/2}}{3/2}$ &\hspace{-2mm}  $K(x) = \frac{|x|^7}{7} - \frac{|x|^{3/2}}{3/2}$\\
 \hline 
 & & \\\hspace{-2mm} 
 \includegraphics[trim={.3cm .7cm .6cm 1.3cm},clip,height=.34\textheight]{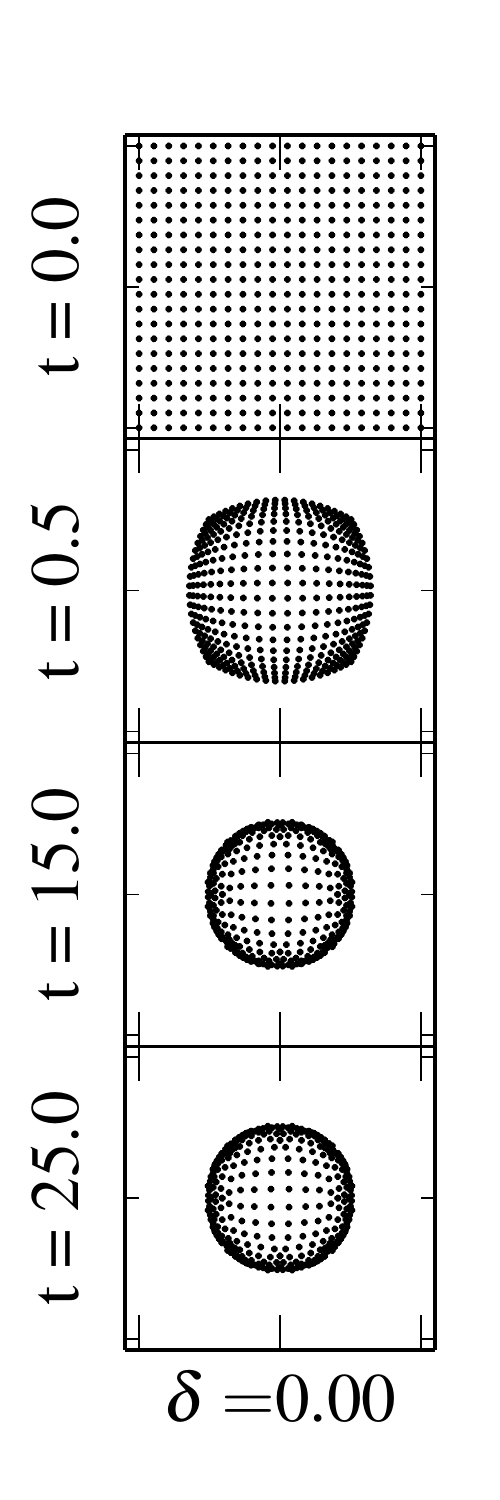} \hspace{-2mm} 
\includegraphics[trim={1.26cm .7cm .6cm 1.3cm},clip,height=.34\textheight]{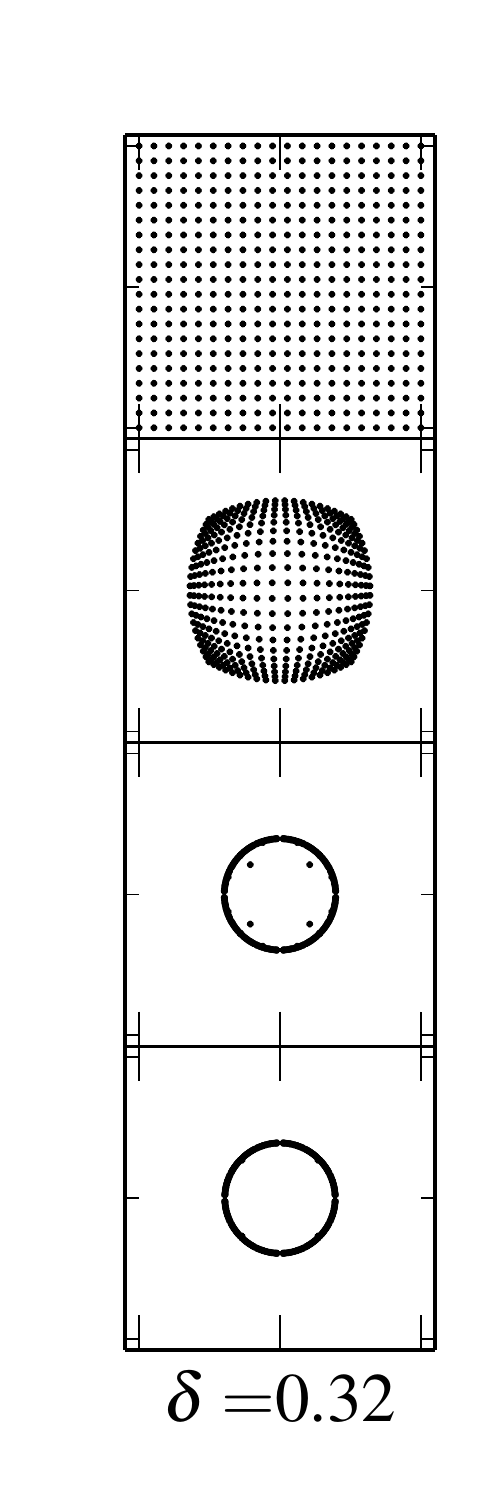}
& \hspace{-2mm} 
\includegraphics[trim={.3cm .7cm .6cm 1.3cm},clip,height=.34\textheight]{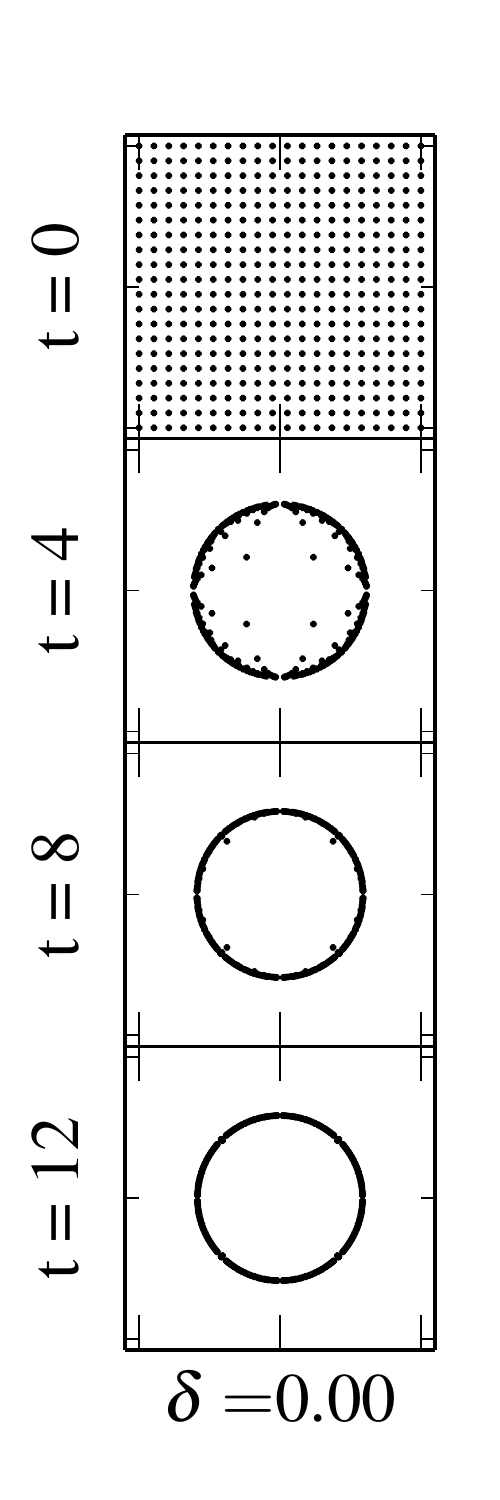} \hspace{-2mm} 
\includegraphics[trim={1.26cm .7cm .6cm 1.3cm},clip,height=.34\textheight]{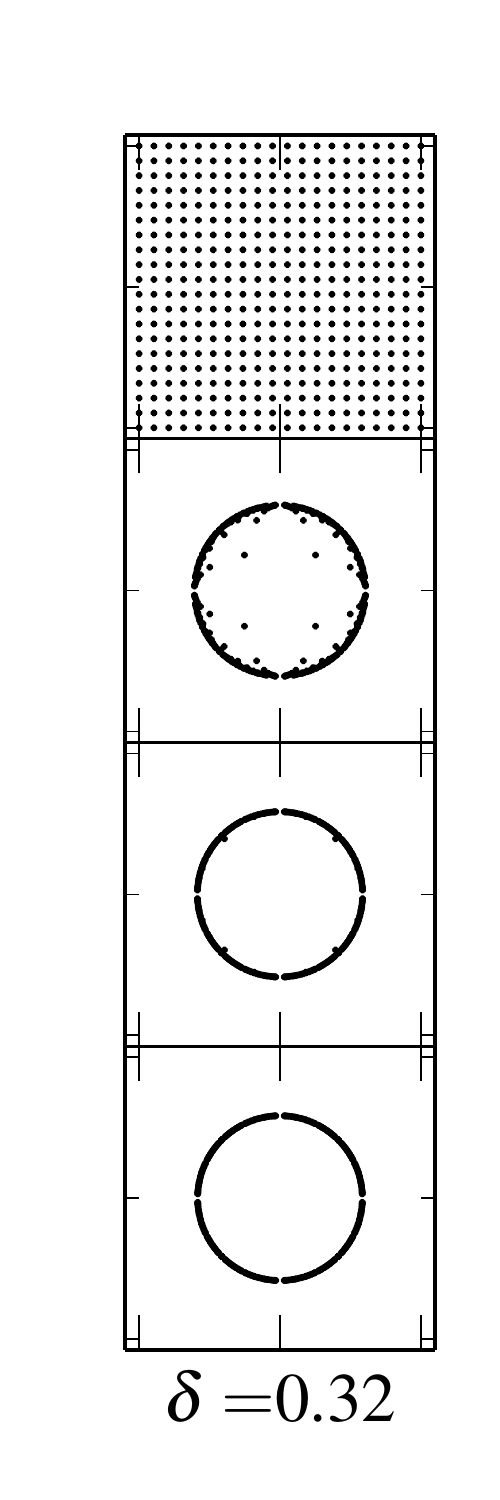}
& \hspace{-2mm} 
\includegraphics[trim={.3cm .7cm .6cm 1.3cm},clip,height=.34\textheight]{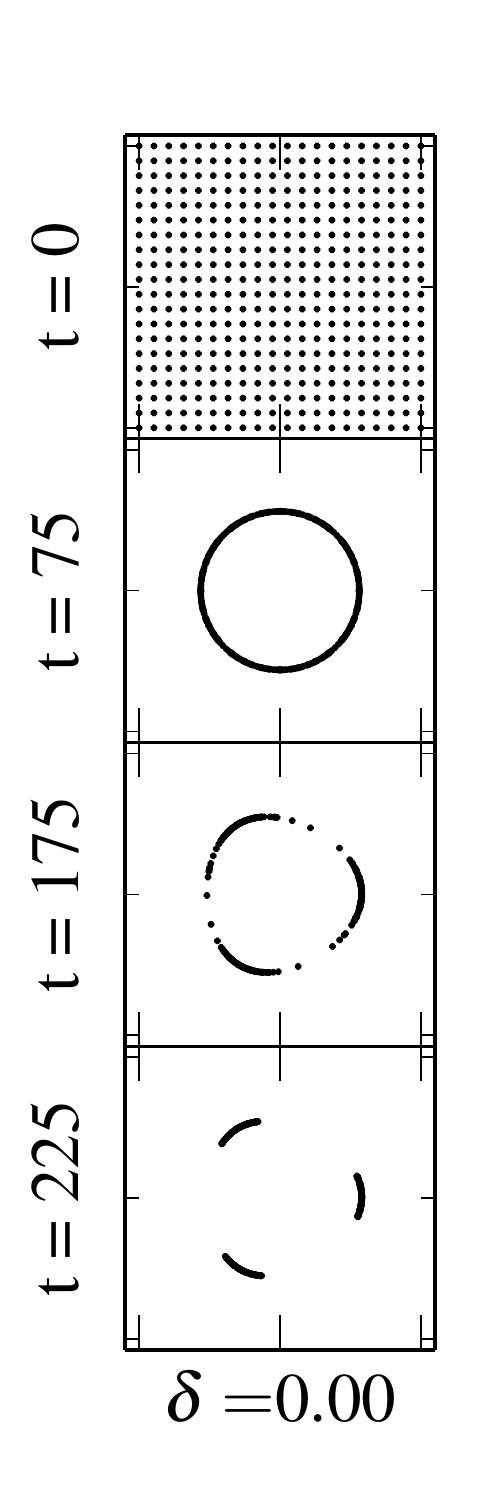} \hspace{-2mm} 
\includegraphics[trim={1.26cm .7cm .6cm 1.3cm},clip,height=.34\textheight]{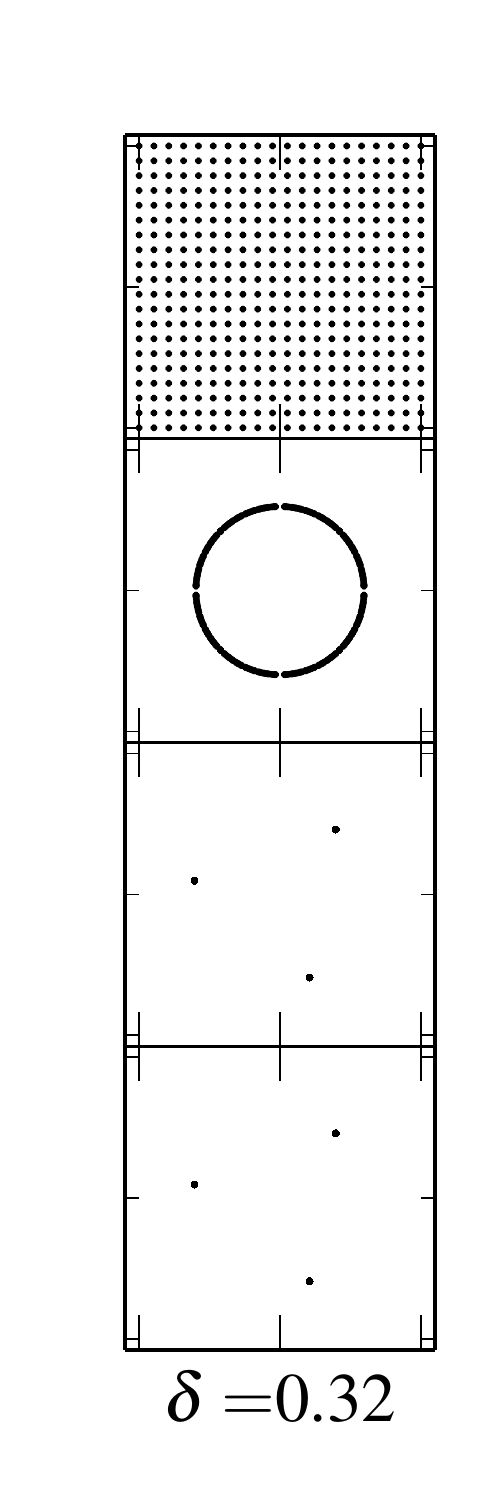}
\\
& & \\
\hline
 \end{tabular}
\captionof{figure}{A comparison of numerical solutions to the aggregation equation. The mollifier (\ref{2d mollifier}) satisfies $m =4$, and the unit square is discretized with $h \approx .11$. For the first kernel, we use an analytic expression for $\grad K_\delta$. For the second two kernels, we numerically compute the convolution $\grad K * \psi_\delta$ in radial coordinates on a ball of radius 2.5 with 100 grid points.} \label{repulsiveattractivetable}
 \end{table}
 
\noindent
\textbf{Repulsive-attractive power law kernels:} Figure \ref{repulsiveattractivetable} displays the evolution of particle trajectories for numerical solutions to the two dimensional aggregation equation with repulsive-attractive power law kernels. In the past few years, there has been significant interest in such kernels, due to the  stationary patters which develop \cite{BalagueCarrilloLaurentRaoul,BalagueCarrilloLaurentRaoul_Dimensionality,Bertozzietal_RingPatterns,BalagueCarrilloYao, BertozziLaurentLeger, BertozziGarnettLaurent, BertozziBrandman, BertozziCarrilloLaurent, 5person2, CarrilloChipotHuang, Dong, FellnerRaoul, FellnerRaoul2, FetecauHuangKolokolnikov, FetecauHuang, HuangBertozzi, HuangBertozzi2, Kolokolnikovetal_StabilityRingPatterns, Poupaud,Raoul, SunUminskyBertozzi, TopazBertozzi1,HuangWitelskiBertozzi}.

Figure \ref{repulsiveattractivetable} compares the results of a particle method with the blob method. In all six plots, the initial data is
\begin{align} \label{pfn2rho0}
\rho_0(x) = \begin{cases} C(1-x^2)^{2} &\mbox{if } |x|\leq 1 \ , \\ 
0  & \mbox{otherwise} \ , \end{cases}
\end{align} 
with $C$ chosen so that $\int \rho_0 = 1$. 

For each of the three choices of kernel, the corresponding plots demonstrate that a large regularization parameter---in this case $\delta \approx 0.32$---can affect the steady states of blob method solutions. This effect vanishes as $\delta$ becomes small, but it still of interest since it illustrates the important role of the kernel's regularity in the dimensionality of steady states. 
Balagu\'e, Carrillo, Laurent, and Raoul proved that the dimensionality of the support of steady state solutions depends on the strength of the repulsive forces at the origin \cite{BalagueCarrilloLaurentRaoul_Dimensionality}. For a  repulsive-attractive power law kernel,
\[ K(x) = |x|^a/a - |x|^b/b \ , \quad a>b \ , \]
the repulsive part is more singular than the attractive part, so regularizing the kernel by convolution with a mollifier has a greater effect for the repulsive part, and we expect this to dampen the repulsive forces.

For $K(x) = |x|^4/4 - \log |x|/2\pi$, we recover the radial, integrable, compactly supported steady states found by Fetecau, Huang, and Kolokolnikov \cite{FetecauHuangKolokolnikov}. A large regularization parameter causes the steady states to collapse to a ring.
For $K(x) =|x|^4/4 - |x|^{3/2}/(3/2)$, we recover the stable delta ring found by Kolokonikov, Sun, Uminsky, and Bertozzi \cite{Kolokolnikovetal_StabilityRingPatterns}. A comparison of the particle and blob methods at $t=8$ indicates that the blob method solution converges to the ring more quickly.
For $K(x) = |x|^7/7 - |x|^{3/2}/(3/2)$, we recover the ring formation and breakup found by Bertozzi, Sun, Kolokolnikov, Uminsky, and Von Brecht \cite{Bertozzietal_RingPatterns}. A large regularization parameter lowers the dimension of the steady state to three point masses.

\begin{table}[h]
 \centering
 \begin{tabular}{c|cc}
 \hspace{-3mm}  $K(x) = e^{-|x|} - 2 e^{-|x|/2}$ & \hspace{-2mm} $K(x) = 2e^{-|x|} - 2 e^{-|x|/2}$ &\hspace{-2mm}  \\
\vspace{-3mm} & & \\
regular initial data & regular initial data & discontinuous initial data \\
 \hline 
 & & \\\hspace{-3mm} 
 \includegraphics[trim={.3cm .7cm .6cm 1.3cm},clip,height=.34\textheight]{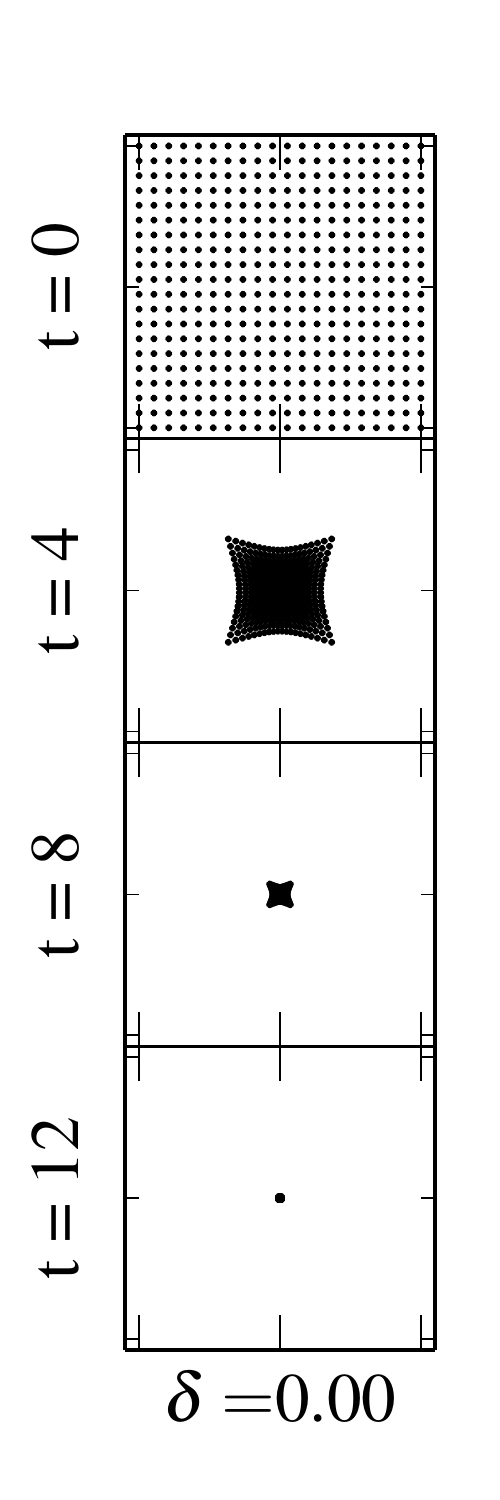} \hspace{-3mm} 
\includegraphics[trim={1.26cm .7cm .6cm 1.3cm},clip,height=.34\textheight]{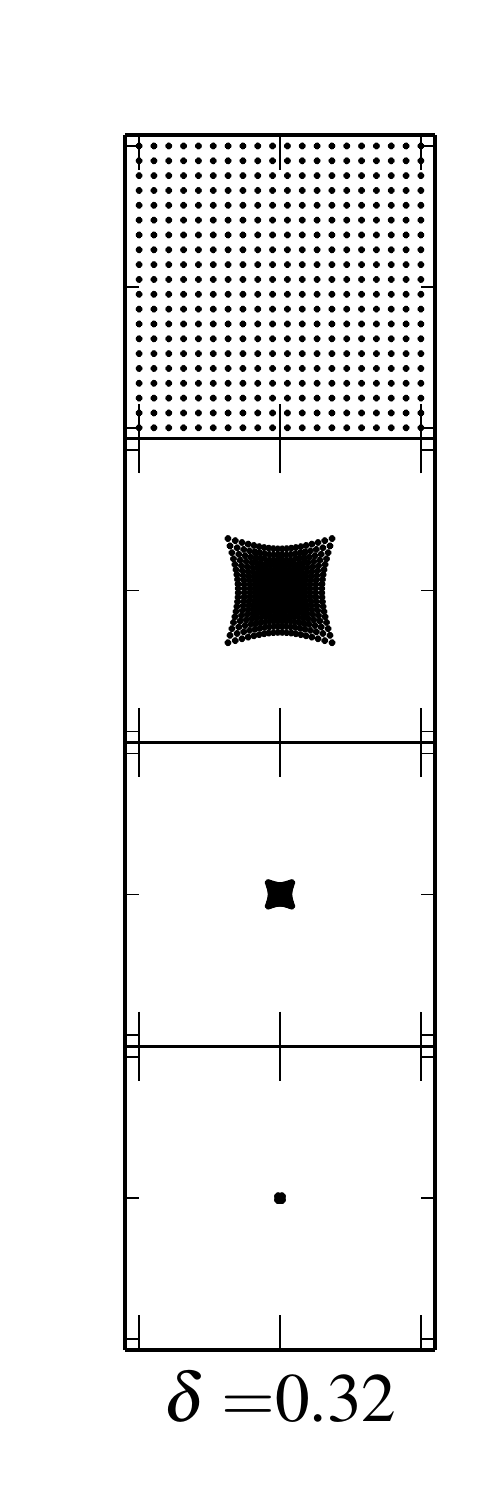}
& \hspace{-2mm} 
\includegraphics[trim={.3cm .7cm .6cm 1.3cm},clip,height=.34\textheight]{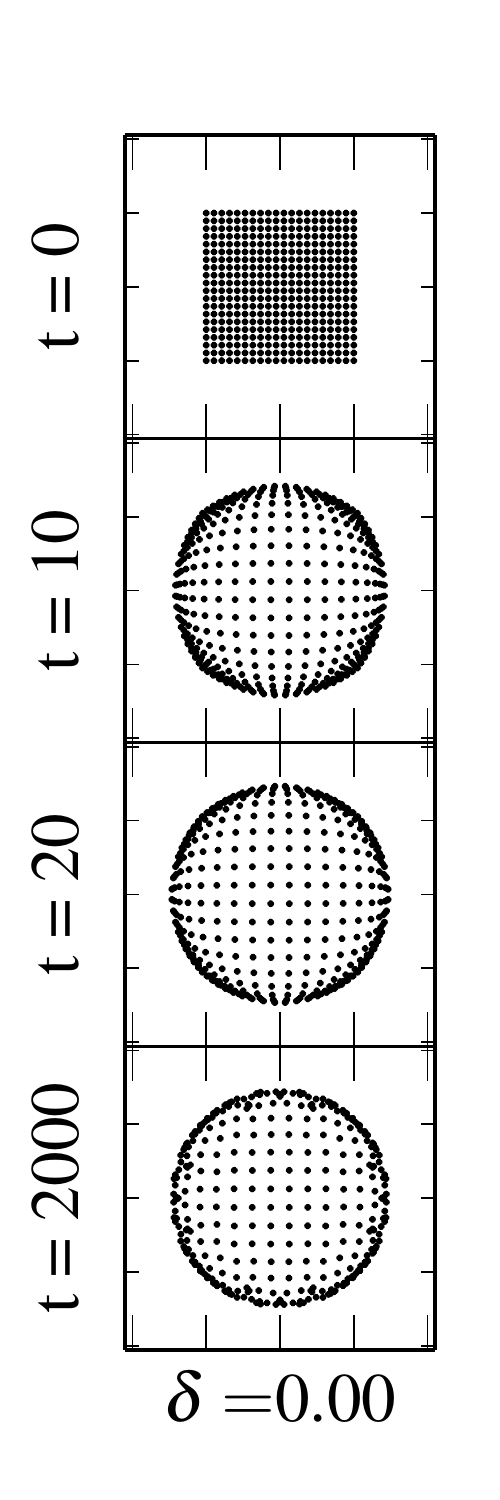} \hspace{-3mm} 
\includegraphics[trim={1.26cm .7cm .6cm 1.3cm},clip,height=.34\textheight]{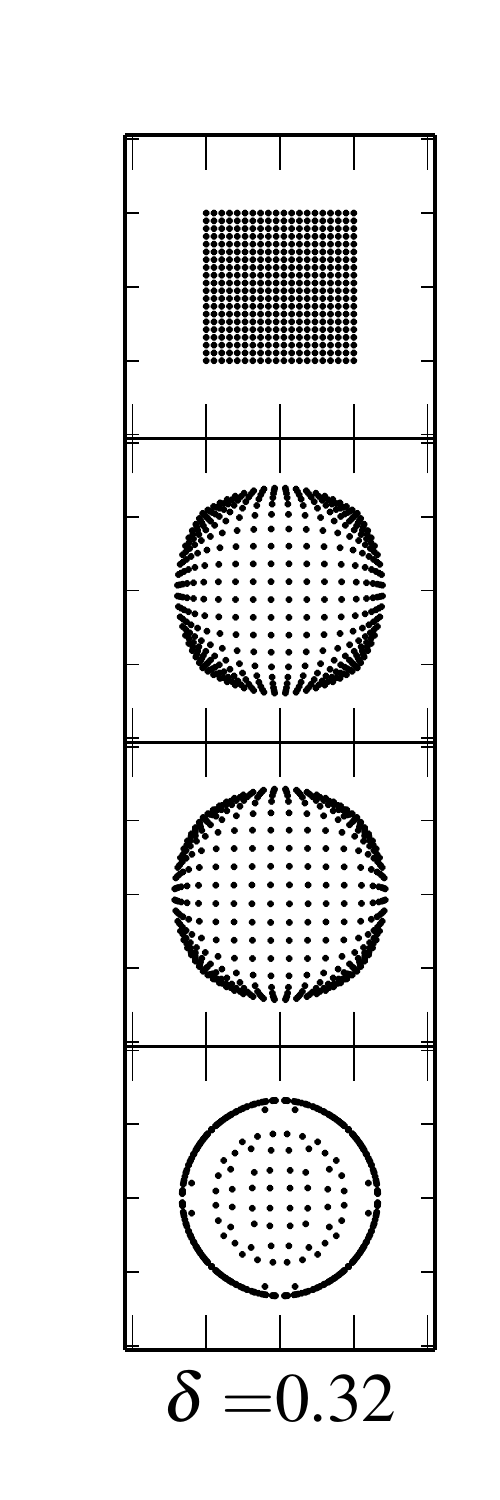}
& \hspace{-3mm} 
\includegraphics[trim={.3cm .7cm .6cm 1.3cm},clip,height=.34\textheight]{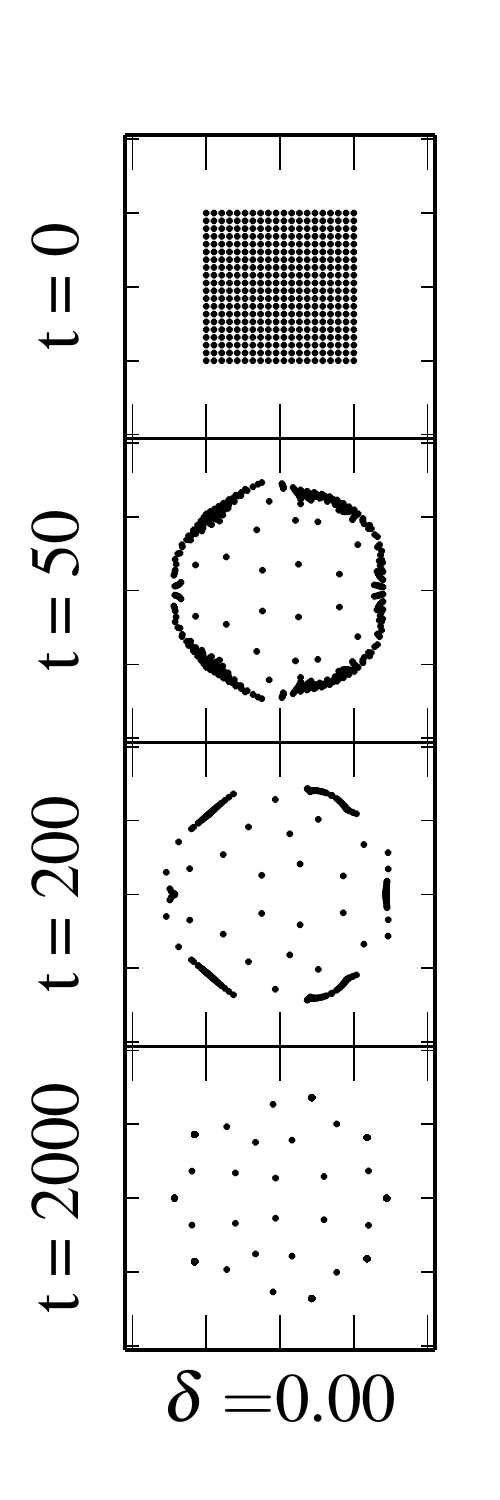} \hspace{-3	mm} 
\includegraphics[trim={1.26cm .7cm .6cm 1.3cm},clip,height=.34\textheight]{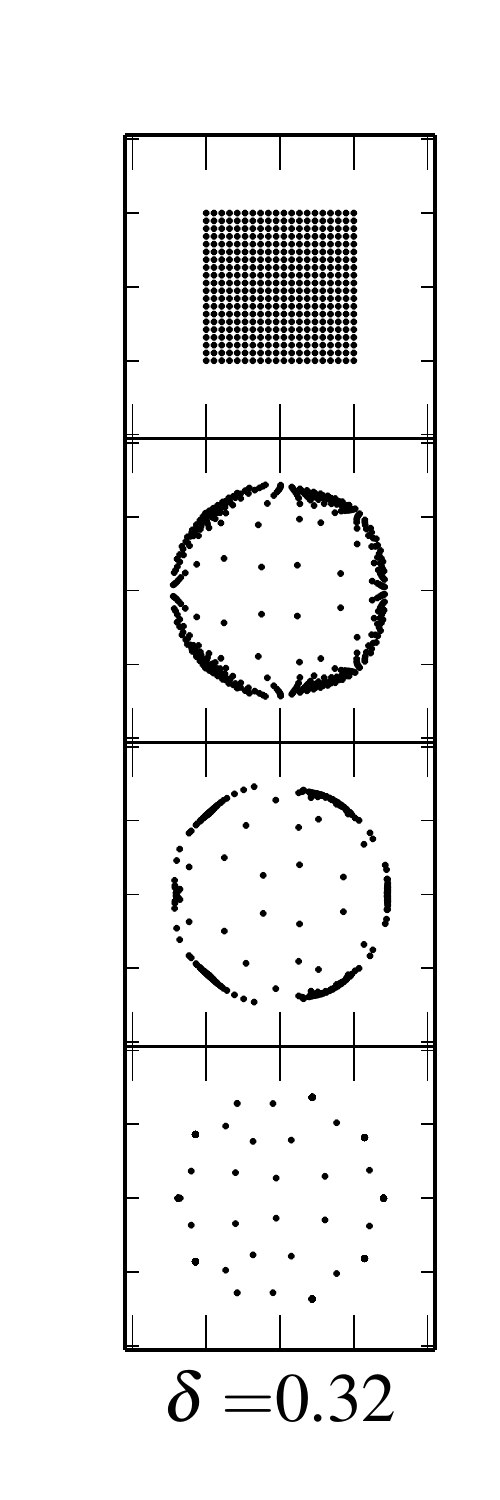}
\\
& & \\
\hline
 \end{tabular} 
\captionof{figure}{A comparison of numerical solutions to the aggregation equation computed by particle and blob methods. The mollifier (\ref{m4d1mollifier}) is chosen so that $m =4$, and the unit square is discretized with $h \approx 0.11$. The spatial scale in the first column is the unit square, and the spatial scale is doubled in the second and third columns. For all three kernels, we numerically compute the convolution $\grad K * \psi_\delta$ and $\Delta K * \psi_\delta$ in radial coordinates on a ball of radius 5 with 200 grid points.}
\label{morsetable}
 \end{table}
 
\quad \\  \noindent
 \textbf{Repulsive-attractive Morse potential:} Figure \ref{morsetable} shows the evolution of particle trajectories for numerical solutions to the aggregation equation when the kernel is given by a repulsive-attractive Morse potential, comparing the results of a particle method with the blob method. In the first two columns, the initial data is given by (\ref{pfn2rho0}), with $C$ chosen so that $\int \rho_0 =1$. In the third column, the initial data is given by a star shaped patch (\ref{starrho0}).
 
We only observe the effect of a large regularization parameter for the kernel $K(x) = 2e^{-|x|} - 2 e^{-|x|/2}$ with regular initial data. This is likely due to the fact that the repulsive and attractive components of the kernel have the same regularity, so regularization does not disproportionately affect one without the other. 

\section{Conclusion} \label{ConclusionSection}

We develop a new numerical method for the aggregation equation for a range of  kernels, including singular kernels, kernels which grow at infinity, and repulsive-attractive kernels. We prove that our blob method solutions converge to classical solutions of the aggregation equation with arbitrarily high rates of convergence, depending on the choice of blobs. We also provide several numerical examples which confirm our theoretically predicted rates of convergence and illustrate key properties of the method, including long-time existence of particle trajectories.

As analysis of numerical methods for the aggregation equation is a relatively new area of interest, there are several directions for future work. First, our estimates do not differentiate between purely repulsive, purely attractive, or repulsive-attractive kernels, and our numerical method may be improved by leveraging the different dynamics in each of these cases. In particular, a numerical method for repulsive or repulsive-attractive kernels might take advantage of the formation of steady states to obtain global in time bounds on the error. Preliminary analysis in this direction indicates that it will be necessary to measure error in different norms. Our current discrete $L^p$ norms measure the distance between exact and approximate solutions along particle trajectories that begin at the same grid point. However, small perturbations of the functional can change where particles settle in a steady state. This causes the discrete $L^p_h$ norm of the error to be large, even if the overall structure of the steady state is the same.

Another direction for future work is to adapt the blob method to the Keller-Segel equation by the addition of (possibly degenerate) diffusion. From the perspective of classical vortex methods, the addition of diffusion corresponds to passing from the Euler equations to the Navier Stokes equations, so it may be possible to adapt random vortex or core spreading methods from the Navier Stokes equations to the Keller-Segel equation \cite{Rossi, Chorin,MarchioroPulvirenti,Goodman,Long}. 
Our method might also be extended to the Keller-Segel equation by separately simulating the effects of aggregation and (degenerate) diffusion at each time step. Yao and Bertozzi developed such a method by using a radial particle method to simulate aggregation and an implicit finite volume scheme to simulate degenerate diffusion \cite{YaoBertozzi}.
 The blob method would allow one to perform the aggregation step for non-radial solutions with a high degree of accuracy.

Finally, our result on the convergence of the blob method to classical solutions  might be extended to weak measure solutions. Lin and Zhang \cite{LinZhang} used a blob method to prove existence of weak measure solutions for the two dimensional aggregation equation when $K = (-\Delta)^{-1}$. The blob method might also be used to prove existence of weak measure solutions for the multidimensional aggregation equation for a range of singular kernels. This would build on the work of Bertozzi, Garnett, and Laurent for radial initial data and kernels of the form $K(x) = |x|^a$, $2-d \leq a <2$ \cite{BertozziGarnettLaurent}. Such a result would be of particular interest for kernels in the range $K(x) = |x|^{-1}$ to $K(x) = |x|$, for which uniqueness may also hold.

\section{Appendix} \label{Appendix}

\subsection{Discrete $L^p$ and Sobolev Norms} \label{discrete norms section}
In this section, we provide references and proofs for the discrete norm inequalities given in Proposition \ref{discrete norm prop}. We begin with the following lemma regarding extensions for discrete Sobolev spaces.
\begin{lemma}[extensions from $W^{1,p}_h(Q_R)$ to $W^{1,p}_h(\Rd)$] \label{extlemma}
Let $Q_R = [-R,R]^d$ for $R \geq 3$. For all $0<h<1$, there exists $P: W^{1,p}_h(Q_R) \to W^{1,p}_h(\Rd)$ satisfying $Pg|_{Q_R} = g$ and $\|Pg\|_{W^{1,p}_h(\Rd)} \leq C_{d,p} \|g\|_{W^{1,p}_h(Q_R)}$.
\end{lemma}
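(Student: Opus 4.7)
The plan is to adapt the classical reflection-based Sobolev extension theorem for cubes to the discrete grid. Setting $n=\lfloor R/h\rfloor\ge 3$ (since $R\ge 3$ and $h<1$), the grid points of $Q_R$ are $\{ih:|i_j|\le n\ \forall j\}$. I will define a coordinatewise reflection $\phi_j:\mathbb Z\to\{-n,\dots,n\}$ by $\phi_j(k)=k$ for $|k|\le n$, $\phi_j(k)=2n-k$ for $n<k\le 3n$, $\phi_j(k)=-2n-k$ for $-3n\le k<-n$, set $\phi(i)=(\phi_1(i_1),\dots,\phi_d(i_d))$, and extend $g$ by $\tilde g(ih)=g(\phi(i)h)$ on the cube of side $3nh$ (and $0$ beyond). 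Because each $\phi_j$ covers each interior index at most three times, $\|\tilde g\|_{L^p_h}\le 3^{d/p}\|g\|_{L^p_h(Q_R)}$.

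Next I will show that the analogous inequality holds for forward differences. Whenever $i_j$ and $i_j+1$ lie in the same reflection region (both interior, both right-reflected, or both left-reflected), $D^+_j\tilde g(ih)=\pm D^+_j g(\phi(i)h)$, with the sign determined by the region. At the two hinge indices $i_j=n$ and $i_j=-n-1$, one verifies directly that $D^+_j\tilde g(ih)$ is again a single signed $D^+_jg$ value inside $Q_R$; for example, $D^+_j\tilde g(\dots,n,\dots)=[g(\dots,n-1,\dots)-g(\dots,n,\dots)]/h=-D^+_j g(\dots,n-1,\dots)$. Counting preimages gives $\|D^+_j\tilde g\|_{L^p_h}\le 3^{d/p}\|D^+_j g\|_{L^p_h(Q_R)}$. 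The only grid points at which $D^+_j\tilde g$ could acquire a spurious $1/h$ factor are at the outer boundary $i_j=3n$, where the shifted point lies outside the reflection domain; however, from $n>R/h-1$ and $R\ge 3$ we obtain $3nh>2R$, so these points sit outside $Q_{2R}$, where the cutoff introduced below vanishes.

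Finally, I will take $\eta:\Rd\to[0,1]$ smooth with $\eta\equiv 1$ on $Q_R$, $\eta\equiv 0$ off $Q_{2R}$, and $\|\grad\eta\|_{L^\infty}\le C/R$, and set $Pg(ih)=\eta(ih)\,\tilde g(ih)$. Then $Pg|_{Q_R}=g$ is immediate, the $L^p_h$ bound is inherited from $\tilde g$, and the discrete Leibniz rule
\begin{equation*}
 D^+_j(\eta\tilde g)(ih)=\eta(ih+he_j)\,D^+_j\tilde g(ih)+(D^+_j\eta)(ih)\,\tilde g(ih)
\end{equation*}
combined with $|\eta|\le 1$, $|D^+_j\eta|\le\|\grad\eta\|_{L^\infty}\le C$, and the reflection bounds yields $\|Pg\|_{W^{1,p}_h(\Rd)}\le C_{d,p}\|g\|_{W^{1,p}_h(Q_R)}$.

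The main obstacle is the discrete bookkeeping at the hinge indices $i_j=\pm n$, where two reflected copies meet and $D^+_j$ straddles the interface; the direct computation above shows no spurious $1/h$ factor appears because $\tilde g$ is designed so that its values on both sides of the hinge produce a clean $\pm D^+_jg$ evaluated at a single interior grid point. In dimensions $d>1$ the argument factors across coordinates, since $\phi$ is a product of one-dimensional reflections and $D^+_j$ acts only in the $j$-th coordinate.
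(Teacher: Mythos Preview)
Your proof is correct and takes a genuinely different route from the paper's. The paper proceeds via a partition of unity $\{\eta_{\vec v}\}$ indexed by the $2^d$ vertices of $Q_R$, with each $\eta_{\vec v}$ vanishing near the faces opposite $\vec v$; each localized piece $g_{\vec v}=\eta_{\vec v}g$ is then extended by a single even reflection across each coordinate axis (doubling the cube), and because $g_{\vec v}$ already vanishes near the far faces, the subsequent extension by zero creates no boundary jump. Summing the $2^d$ extensions yields $Pg$. You instead reflect $g$ directly across all faces at once (tripling the cube) and then apply a single smooth cutoff supported in $Q_{2R}$. The paper's partition-of-unity step trades the outer-boundary bookkeeping you handle with the cutoff for bookkeeping at the seams between the $2^d$ pieces; both routes avoid any $1/h$ loss. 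Your argument is somewhat more direct (one global reflection rather than $2^d$ local ones), while the paper's mirrors the classical continuous Sobolev-extension proof more closely and never needs to track the interplay between the cutoff support and the tripled cube.

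One minor omission in your write-up: you mention only the outer boundary $i_j=3n$, but the symmetric case $i_j=-3n-1$ (where $\tilde g(ih)=0$ but $\tilde g(ih+he_j)\ne 0$) also produces a spurious $1/h$ term in $D_j^+\tilde g$; the same inequality $3nh>2R$ ensures $\eta$ vanishes there as well, so the argument goes through unchanged.
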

\begin{proof}
Denote the vertices of $Q_R$ by $\vec{v} \in \{ (v_1, \dots, v_d) : v_i = \pm R \}$. Define a partition of unity $\{\eta_{\vec{v}}\}$ on $Q_R$ so that $0 \leq \eta_{\vec{v}} \leq 1$, $\sum_{\vec{v}} \eta_{\vec{v}} =1$, and $\eta_{\vec{v}}(x)$ vanishes on the edges of the cube opposite $\vec{v}$, i.e. whenever $x = (x_1, \dots, x_d)$ satisfies $|x_j - (-v_j)|< 1$.

For any $g \in W^{1,p}_h(Q_R)$, define $g_{\vec{v}} = \eta_{\vec{v}} g $. We claim that for all $g_{\vec{v}}$, there exists an extension $P g_{\vec{v}}$ so that $Pg_{\vec{v}}|_{Q_R} = g_{\vec{v}}$ and  $ \|Pg_{\vec{v}}\|_{W^{1,p}_h(\Rd)} \leq C_{d,p} \|g_{\vec{v}}\|_{W^{1,p}_h(Q_R)} $. Supposing this claim holds, define the extension $Pg$ of $g$ by $Pg = \sum_{\vec{v}} Pg_{\vec{v}}$. This satisfies $P g |_{Q_R} = \sum_{\vec{v}} Pg_{\vec{v}} |_{Q_R}  = \sum_{\vec{v}} g_{\vec{v}} = g$ and
\[ \|Pg\|_{W^{1,p}_h(\Rd)} \leq \sum_{\vec{v}} \|Pg_{\vec{v}}\|_{W^{1,p}_h(\Rd)} \leq \sum_{\vec{v}}C_{d,p} \|g_{\vec{v}}\|_{W^{1,p}_h(Q_R)}  \leq C_{d,p} \|g\|_{W^{1,p}_h(Q_R)} \ . \]

Thus it remains to construct the extension $Pg_{\vec{v}}$. By translation invariance, we suppose that $Q_R = [0, 2R]^d$, and by rotational symmetry, we suppose that $\vec{v} = (0,0, \dots, 0)$. Then $g_{\vec{v}} \in W^{1,p}(Q_R)$ and $g_{\vec{v}}(x) = 0$ whenever $x = (x_1, \dots, x_d)$ satisfies $|x_j - 2R|< 1$. To extend $g_{\vec{v}}$ to all of $\Rd$, first
reflect the function across each coordinate axis, giving a function defined on $[-2R, 2R]^d$, and then extend it to be zero outside of $[-2R, 2R]^d$. The resulting extension satisfies $Pg_{\vec{v}}|_{Q_R} = g_{\vec{v}}$ and $ \|Pg_{\vec{v}}\|_{W^{1,p}_h(\Rd)} \leq C_{d,p} \|g_{\vec{v}}\|_{W^{1,p}_h(Q_R)}$.
\end{proof}

We now turn to the proof of Proposition \ref{discrete norm prop}.
\begin{proof}[Proof of Proposition \ref{discrete norm prop}] \ \\
\begin{enumerate}[(a)]

\item The result follows from H\"older's inequality on $\ell^p$ and the definition of $W^{-1,p}_h$.
\[  \|u\|_{W^{-1,p}_h} = \sup_{g \in W^{1,p'}_h} \frac{|(u,g)_h|}{\|g\|_{W^{1,p'}_h}} \leq \sup_{g \in W^{1,p'}_h} \frac{\|u\|_{L^p_h} \|g\|_{L^{p'}_h}}{\|g\|_{W^{1,p'}_h}}  \leq \|u\|_{L^p_h} \]
(See also \cite[Equation 2.9]{Beale}.)

\item By definition of the ${W^{-1,p}_h}$ norm,
\begin{align} \label{eineqpf}
|(u,g)_h| \leq \|u\|_{W^{-1,p}_h} \|g\|_{W^{1,p'}_h} \leq (1+ 2d/h) \|u\|_{W^{-1,p}_h} \|g\|_{L^{p'}_h} \ .
\end{align}
Suppose $p< +\infty$. If we define $g= \{ |u_i|^{p-2} u_i \}$ and use that $(p-1)p' = p$,
\[ (u,g)_h = \sum |u_i|^{p}h^d = \|u\|_{L^p_h}^p \  \text{ and } \  \|g\|_{L^{p'}_h}^{p'} =  \sum |u_i|^{(p-1)p'} h^d = \|u\|_{L^p_h}^{p} \ . \]
Thus, $ \|u\|_{L^p_h} \leq (1+ 2d/h) \|u\|_{W^{-1,p}_h}$.

If $p = +\infty$, define $g^j_i$ to be $1$ if $ i = j$ and $0$ otherwise. Then (\ref{eineqpf}) implies
\[ |u_j|h^d \leq (1+2/h)\|u\|_{W^{-1,\infty}_h} h^d \ , \]
so $\|u\|_{L^\infty_h} \leq (1+2d/h)\|u\|_{W^{-1,\infty}_h}$. (See also \cite[Equation 2.10]{Beale}.)

\item If $\Omega \subseteq B_R$, the number of grid points in $\Omega$ is bounded by $C_R h^{-d}$. By an elementary inequality for $\ell^p$ norms on finite dimensional vector spaces,
\[ \|u\|_{L^q_h(\Omega)} = h^{d/q} \|u\|_{\ell^q}  \leq (C_R h^{-d})^{\frac{1}{q}-\frac{1}{p}} h^{d/q} \|u\|_{\ell^p} \leq C_{R,q,p}  \|u\|_{L^p_h(\Omega)} \ .\]

\item Fix $Q_R = [-R,R]^d$, so by $(c)$,$ \|g\|_{W^{1,p'}_h(Q_R)} \leq C_{p,q,R} \|g\|_{W^{1,q'}_h(Q_R)} \leq C_{p,q,R} \|g\|_{W^{1,q'}_h}$.
 By Lemma \ref{extlemma}, there exists an extension operator $P: W^{1,p'}_h(Q_R) \to W^{1,p'}_h(\Rd)$.
Since $u$ is supported in $B_R \subseteq Q_R$,
\begin{align*}
|(u,g)_h| &= |(u,Pg)_h| \leq \|u\|_{W^{-1,p}_h}\|Pg\|_{W^{1,p'}} \leq \|u\|_{W^{-1,p}_h}\|g\|_{W^{1,p'}_h(Q_R)} \\
&\leq C_{p,q,R} \|u\|_{W^{-1,p}_h}\|g\|_{W^{1,q'}_h} \ . 
\end{align*}

\item See \cite[Proposition 2.1]{BealeMajda1}.

\end{enumerate}
\end{proof}

\subsection{Proof of Regularity of Velocity Field and Particle Trajectories} \label{reg velocity field section}
In this section we prove Lemma \ref{reg velocity field lemma} on the regularity of the velocity field $\vec{v} = \grad K * \rho$, the divergence of the velocity field $\grad \cdot \vec{v} = \Delta K *\rho$ and Lemma \ref{particlelemma} on the regularity of the particle trajectories $X^t(\alpha)$ and the Jacobian determinants $J^t(\alpha)$.

\begin{proof}[Proof of Lemma \ref{reg velocity field lemma}] 
By the linearity of differentiation and convolution, it is enough to show the result in the specific case that that $K = K_1$ with $s = S \geq 1-d$.
 
 When $s = 1-d$, $K$ is a constant multiple of the Newtonian potential. By Assumption \ref{exact sol reg as}, $\rho \in C^1([0,T],C^{r}_c(\Rd))$ for $r \geq m,L$ and has compact support. It is a classical result that $\Delta K * \rho = C \rho$. Thus $\grad \cdot \vec{v} = \Delta K * \rho$ belongs to $C^{L}(\Rd) \cap C^{m}(\Rd)$ and has bounded derivatives up to order $m$.

Now, we may assume that both $\grad K$ and $\Delta K$ are homogeneous of order at least $1-d$. We  treat both simultaneously by proving the result for a kernel $\K$, which for fixed $l\geq 1-d$ satisfies
 \[ |\partial^\beta \K(x)| \leq C|x|^{l-|\beta|} , \ \forall x \in \Rd \setminus \{0\}  , 0 \leq |\beta| \leq l + d-1 \ . \] 
 Note that this implies $\partial^\beta \K \in L^1_\loc(\Rd)$ for all $|\beta| \leq l + d - 1$.
When $\K(x) = \grad K(x)$, $l=s$, and when $\K(x) = \Delta K(x)$, $l = s-1$. The fact that $ \K * \rho \in C^L(\Rd)$ is immediate, since $\rho \in C^L_c(\Rd)$ and $\K \in L^1_\loc (\Rd)$. We now turn to the estimates that imply $\K * \rho \in C^m(\Rd)$ and the bound on its derivatives.

We prove by induction that for ${0\leq |\gamma| \leq l + d-1}$, 
\[\partial^\gamma \K * \rho (x) = \int \partial^\gamma \K(x-y) \rho(y) dy \ .\]
 If $|\gamma|=0$, equality holds. Suppose $|\gamma| \geq 1$ and $\partial^\beta \K * \rho (x) = \int \partial^\beta \K(x-y) \rho(y) dy$ for all $|\beta| = |\gamma|-1$. If $|\eta|=1$ satisfies $\partial^\eta \partial^\beta = \partial^\gamma$,
\[ \partial^\gamma (K*\rho) = \partial^\eta ((\partial^\beta \K )* \rho) = (\partial^\beta \K )*( \partial^\eta \rho) \ . \]
We now move $\partial^\eta$ onto $\K$. Fixing $\epsilon \in (0,1)$,
\begin{align*}
(\partial^\beta \K) * (\partial^\eta \rho)(x) &= \int_{|y|< \epsilon} \partial^\beta \K(y) \partial^\eta_x \rho(x-y) dy + \int_{|y|\geq \epsilon} \partial^\beta \K(y) \partial^\eta_x \rho(x-y) dy \\
&=\mathcal{O}(\epsilon^{d+l - |\beta|}) + \int_{|y|\geq \epsilon} \partial^\gamma \K(y) \rho(x-y) dy + \mathcal{O}(\epsilon^{d-1+l - |\beta|}) \ .
\end{align*}
Since $|\beta| = |\gamma|-1 \leq l + d-2$, sending $\epsilon \to 0$ gives
\[ ( \partial^\beta \K )* (\partial^\eta \rho)(x)  = ( \partial^\gamma \K )* \rho(x)  \ . \]

This shows $\K*\rho \in C^{l + d-1}(\Rd)$. We now show $\K*\rho \in C^m(\Rd)$. If $m \leq l + d-1$, this is immediate, so suppose $m > l + d-1$. By Assumption \ref{exact sol reg as}, $\rho \in C^r_c(\Rd)$ for $r \geq m -( s + d-2)$. Since $\partial^\gamma (\K*\rho) = (\partial^\gamma \K)*\rho$ and $\partial^\gamma \K \in L^1_\loc$ for all $|\gamma| \leq l+ d-1$, for any $|\beta| = m$, there exists $|\eta| = m - (l + d-1) \leq r$ so that 
\[ \partial^\beta \K*\rho = (\partial^\gamma \K)*(\partial^\eta \rho) \ . \]
Thus, $\K*\rho \in C^m(\Rd)$.

Finally, we show that $|\partial^\beta \K*\rho(x,t)| \leq C(1+ |x|^{(l-|\beta|)_+})$ for $|\beta| \leq m$. If ${|\beta| \geq l + d-1 }$, let $|\eta| = |\beta| - (l + d-1)$ and $|\gamma| =l+ d-1$ so that $\partial^\beta = \partial^\eta \partial^\gamma$. Otherwise, let $\gamma = \beta$ and $\eta =0$. Since $\partial^\beta \K * \rho(x)$ is continuous, it is bounded for $|x| \leq 2R_0$. If $|x| > 2R_0$,
\begin{align*}
|\partial^\beta \K * \rho (x) | &= |(\partial^\gamma \K)*(\partial^\eta \rho)(x) |\leq C\int |x-y|^{l - \gamma} |\partial^\eta \rho(y) |dy
\leq C \int_{B_{R_0}} |x-y|^{l - \gamma}dy \\
& \leq \begin{cases} C |x|^{l - \gamma} &\text{ if } l > \gamma  \ , \\
C &\text{ if } l \leq \gamma \ . \end{cases}
\end{align*}
Thus, $|\partial^\beta \K*\rho(x)| \leq C(1+ |x|^{(l- |\beta|)_+})$. The constant depends on the exact solution, the kernel, the dimension, $\beta$, $T$, and $R_0$.
\end{proof}

We now prove Lemma \ref{particlelemma} on the regularity of the particle trajectories.
\begin{proof}[Proof of Lemma \ref{particlelemma}]
Assumption \ref{exact sol reg as} and Lemma \ref{reg velocity field lemma} ensure sufficient regularity on the velocity field so that there exists a time interval $[0,T_0]$ on which, for $\alpha \in B_{R_0+2}$, the particle trajectories $X^t(\alpha)$ and their inverses $X^{-t}(\alpha)$ uniquely exist, are continuously differentiable in time, and $C^L$ in space. Likewise $J^t(\alpha)$ and $J^{-t}(\alpha)$ are $C^{L-1}$ in space and satisfy equation (\ref{conservationofmassformula}) .

Suppose $T_0 < T$. Then there exists $\alpha$ so that $|X^t(\alpha)| \to+ \infty$ as $t \to T_0$. This contradicts Assumption \ref{exact sol reg as}. Therefore, $T_0 = T$. 

If $s=1-d$, we may replace $B_{R_0+2}$ with $\Rd$.
\end{proof}

\subsection{Proof of Regularized Kernel Estimates} \label{reg kernel estimates}

We now prove the regularized kernel estimates from Section \ref{ConvergenceSection}.
Throughout, we use that
\begin{align} \label{alphaintegrabilityfact}
\begin{cases} |y|^s \in L^1(B_1(0)) \text{ and }|y|^s \in L^\infty(\Rd \setminus B_1(0)) & \text{ if } 1-d \leq s \leq 0 \ , \\
|y|^s \in L^\infty(B_1(0)) &\text{ if }0 < s \ . \end{cases}
\end{align}

\begin{proof}[Proof of Lemma \ref{regofgradKdelta}] 
By the linearity of differentiation and convolution, it is enough to show the result in the specific case that that $K = K_1$ with $s = S \geq 1-d$.

If $s = 1-d$, $K$ is a constant multiple of the Newtonian potential. In this case, since $\psi \in C^L(\Rd)$ for $L \geq d+2$, it is a classical result that $\Delta K_\delta = \Delta K * \psi_\delta = C\psi_\delta$. Assumption \ref{mollifier assumption} ensures $\psi_\delta \in C^L(\Rd)$, hence $\Delta K_\delta \in C^L(\Rd)$.

We may now treat the cases of $\grad K_\delta$ and $\Delta K_\delta$ simultaneously by proving the result for a kernel $\K_\delta = \K * \psi_\delta$, which for fixed $l\geq 1-d$ satisfies
 \[ |\partial^\beta \K(x)| \leq C|x|^{l-|\beta|} , \ \forall x \in \Rd \setminus \{0\}  , 0 \leq |\beta| \leq l + d-1 \ . \] 
When $\K(x) = \grad K(x)$, $l=s$ and when $\K(x) = \Delta K(x)$, $l = s-1$.

It is enough to show that in a neighborhood around every $x$, there exists  $g(y) \in L^1(\Rd)$ which dominates $\K(y) \partial^\beta_x \psi_\delta(x-y)$. Then, the mean value theorem ensures that the difference quotient which converges to this derivative at $x$ is also dominated by $g(y)$, allowing us to conclude
\[\partial^\beta \int \K(y) \psi_\delta(x-y)dy = \int \K(y) \partial^\beta \psi_\delta(x-y)dy \ ,\]
and $\K_\delta \in C^L(\Rd)$. We use Assumption \ref{mollifier assumption} on the decay and regularity of $\psi$ to find dominating functions when $l \leq 0$ and $l >0$.

If $1-d \leq l \leq 0$, the decay and regularity assumptions ensure that there exists $\epsilon >0$ so that $|\partial^\beta \psi(x)| \leq C |x|^{-d-\epsilon}$ for all $|\beta| \leq L$. If $l > 0$ the regularity assumption ensures that $  | \partial^\beta \psi(x)| \leq C|x|^{-d-l-\epsilon}$ for all $|\beta| \leq L$. Since $\partial^\beta \psi$ is bounded near the origin, there exists $C' >0$ so that for all $x \in \Rd$,
\[ | \partial^\beta \psi (x)| \leq \begin{cases}
C' (|x|+1)^{-d - \epsilon} &\text{ if } 1-d \leq l \leq 0  \ , \\
C' (|x|+1)^{-d  - \epsilon- l} &\text{ if } 0< l \ .  \end{cases}\]
Therefore, 
\begin{align}\label{firstkernelbound}
 |\K(y) \partial^\beta \psi_\delta(x-y)| \leq 
\begin{cases} C_\delta |y|^l (|x-y|+1)^{-d  - \epsilon} &\text{ if } 1-d \leq l \leq 0  \ , \\
C_\delta |y|^l (|x-y|+1)^{-d - \epsilon -l}   &\text{ if } 0< l \ .  \end{cases}
\end{align}
If $|x|< R$ and $|y| > 2R$, $|x-y| +1> |y|-|x| >  \frac{1}{2} |y|$. This gives the following dominating functions when $|x| < R$:
\[ |\K(y) \partial^\beta \psi_\delta(x-y)| \leq \begin{cases} C_{\delta,R} |y|^l 1_{|y| \leq 2R} +   C_{\delta,R}   1_{|y| > 2R}&\text{ if } 1-d \leq l \leq 0  \ , \\
C_{\delta,R} &\text{ if } 0< l \ .  \end{cases} \]
Since $R$ was arbitrary, $\partial^\beta \K_\delta(x) = \K * \partial^\beta \psi_\delta(x)$ for all $x\in \Rd$.
\end{proof}

\begin{proof}[Proof of Lemma \ref{reg ker est lem}]
Our proof generalizes the approaches of Beale and Majda \cite{BealeMajda1} and Anderson and Greengard \cite{AndersonGreengard}.
By the linearity of differentiation and convolution, it is enough to show the result in the specific case that that $K = K_1$ with $s = S \geq 1-d$.

We first prove the following pointwise bounds for $|\beta| \leq L$ and $r>0$:
\begin{align*}
|\partial^\beta \grad K_\delta(x) | \leq \begin{cases}C |x|^{s - |\beta|} &\mbox{if } \delta \leq |x| \ , \\ 
C \delta^{s - |\beta|} & \mbox{if }  |x| \leq r \delta \ . \end{cases}
\end{align*}
We begin with $ |x| \leq r \delta$. By Lemma \ref{regofgradKdelta}, $\partial^\beta \grad K_\delta = \grad K* ( \partial^\beta \psi_\delta)$, so
\begin{align}
|\partial^\beta \grad K_\delta(x)| &\leq  \int |\grad K(y)| |\partial^\beta \psi_\delta(x-y) |dy \leq C \int |y|^s \delta^{-d-|\beta|} \left|\partial^\beta \psi\left(\frac{x-y}{\delta}\right) \right| dy \ , \nonumber\\
&\leq C \int |\delta y|^{s} \delta^{-|\beta|} \left|\partial^\beta \psi\left(\frac{x}{\delta} - y \right) \right| dy  = C \delta^{s - |\beta|}  \int |y|^s \left| \partial^\beta  \psi\left(\frac{x}{\delta} - y \right) \right|  dy \ . \label{pointwiseproof1}
\end{align}
Suppose $1-d \leq s \leq 0$.  By (\ref{alphaintegrabilityfact}),
\begin{align*}
|\partial^\beta \grad K_\delta(x)| \leq C \delta^{s - |\beta|} \left\{ \| \partial^\beta \psi \|_{L^\infty(\Rd)} \int_{|y| \leq 1} |y|^s  dy + \sup_{|y| \geq 1} |y|^s \ \|\partial^\beta \psi \|_{L^1} \right\} \leq C  \delta^{s - |\beta|} \ .
\end{align*}

If $ s>0$, we apply Assumption \ref{mollifier assumption}, $|x| \leq r \delta$, and (\ref{pointwiseproof1}) to conclude
\begin{align*}
|\partial^\beta \grad K_\delta(x)| &\leq C \delta^{s - |\beta|}  \int \left|y + \frac{x}{\delta} \right|^s \left| \partial^\beta  \psi\left(y \right) \right|  dy \ , \\
&\leq C \delta^{s - |\beta|} \int  \left(|y| + r \right)^s \left| \partial^\beta  \psi\left(y \right) \right|  dy \leq C \delta^{s - |\beta|} \ .
\end{align*}

We now consider the the pointwise estimate on $|\partial^\beta K_\delta (x)|$ for $|x| \geq \delta$.
First suppose $1-d \leq s\leq 0$.
Let $\phi_0:\R \to [0,1]$ satisfy $\phi_0(s) = 0$ for $s \leq 1/4$ and $\phi_0(s) = 1$ for $s \geq 1/2$.
Define $\phi_x(y) = \phi_0(|y|/|x|)$. By Lemma \ref{regofgradKdelta},
\begin{align*}
|\partial^\beta \grad K_\delta(x)| &= \left| \int \grad K(y) \partial^\beta \psi_\delta(x-y) dy \right|  \ , \\
&\leq \left| \int \grad K(y)\phi_x(y) \partial^\beta \psi_\delta(x-y) dy \right| + \left| \int \grad K(y)(1-\phi_x(y)) \partial^\beta \psi_\delta(x-y) dy \right| \ , \\
&= I_1 + I_2 \ .
\end{align*}
To control $I_1$, we integrate by parts,
\begin{align*}
I_1 \leq \left| \int \partial^\beta_y (\grad K(y) \phi_x(y)) \psi_\delta(x-y) dy \right| \ .
\end{align*}
As $\phi_x(y)$ is only nonzero for $|y|/|x| > 1/4$, we only need to bound $I_1$ for $\delta \leq |x| < 4|y|$. For any multiindex $\gamma$, $|\partial_y^\gamma \phi_x(y)| \leq C |x|^{-|\gamma|}$  and by Assumption \ref{kernel assumption}, $|\partial^\gamma \grad K(y)| \leq C |y|^{s - |\gamma|} \leq C |x|^{s - |\gamma|}$ for $|x| < 4 |y|$. Combining these facts with the product rule gives $|\partial^\beta_y ( \grad K(y)\phi_x(y))| \leq C |x|^{s-|\beta|}$. 
Since $\psi_\delta\in L^1(\Rd)$, this shows
\begin{align*}
I_1 \leq C |x|^{s - |\beta|} \ .
\end{align*}
We now turn to $I_2$. Since $1- \phi_x(y)$ is nonzero for $|y|/|x| < 1/2$, and ${|1- \phi_x(y)| \leq 1}$,
\begin{align*}
I_2 \leq \left| \int_{|y| \leq |x|/2} \grad K(y) \partial^\beta \psi_\delta(x-y) dy \right| \ .
\end{align*}
For $y$ in this range, $|x-y| \geq |x| - |y| \geq |x|/2$. By Assumption \ref{mollifier assumption} on the regularity of the mollifier, $|x|^{d +|\beta|} |\partial^\beta \psi(x)| \leq C$. Therefore,
\[ |\partial^\beta \psi_\delta (x-y) | = \delta^{-d -|\beta|} \left|\partial^\beta \psi \left(\frac{x-y}{\delta} \right) \right| \leq C \delta^{-d -|\beta|} |(x-y)/\delta|^{-d -|\beta|} \leq C |x|^{-d - |\beta|}  \ . \]
Furthermore, since $s \geq 1-d$,
\[ I_2 \leq C |x|^{-d - |\beta|} \left| \int_{|y| \leq |x|/2} \grad K(y) dy \right| \leq C |x|^{- d - |\beta|} \int_0^{|x|/2} r^{s} r^{d-1} dr  \leq C |x|^{s - |\beta|} \ . \]
This completes the proof of the pointwise bounds for $1-d \leq s \leq 0$.

Now we prove the pointwise estimate on $|\partial^\beta K_\delta (x)|$ for $\delta \leq  |x| $ when $s >0$. First, note that for any multiindex $\gamma$ such that  $|\gamma| \leq |\beta|$ and $s - |\gamma| \geq -1$,
\begin{align*}
&| \partial^\beta \grad K_\delta(x) | \\
&\quad \leq \left| \int_{|y|>\epsilon}  \grad K(x-y) \partial^{\beta} \psi_\delta(y) dy \right| +  \left| \int_{|y|\leq\epsilon}  \grad K(x-y) \partial^{\beta} \psi_\delta(y) dy \right| \\
&\quad \leq \left| \int_{|y|>\epsilon}  \partial^\gamma  \grad K(y) \partial^{\beta-\gamma} \psi_\delta(x-y) dy \right| + C_\delta\sum_{m = 0}^{|\gamma|-1} \left| \int_{|y| = \epsilon}  |y|^{s - m} dy \right| + C_\delta \left| \int_{|y|\leq\epsilon}  |y|^s dy \right| \\
&\xrightarrow{\epsilon \to 0} \left|\int \partial^\gamma  \grad K(y) \partial^{\beta-\gamma} \psi_\delta(x-y) dy \right|
\end{align*}
If there is some $|\gamma| \leq |\beta|$ so that $s - |\gamma| \leq 0$, then applying the previous argument for $s\leq 0$ to $|\partial^\gamma \grad K| \leq C|x|^{s-|\gamma|}$,
\[ |\grad^\beta K_\delta(x)| \leq |(\partial^\gamma \grad K) * (\partial^{\beta-\gamma} \psi_\delta)(x)| \leq C |x|^{-1 - (|\beta| -|\gamma|)} = C |x|^{|\gamma|-1 - |\beta|} =  C |x|^{s - |\beta|} \ . \]

On the other hand, if $s - |\beta| > 0$, integrating by parts $|\beta|$ times and applying Assumption \ref{mollifier assumption} on the decay of the mollifier when $s >0$ leaves us with
\begin{align*}
| \partial^\beta \grad K_\delta(x) | &= \left|\int \partial^\beta \grad K(y) \psi_\delta(x-y) dy \right| \leq C\int |y|^{s - |\beta|} |  \psi_\delta(x-y)| dy \\
&\leq C \int_{|y|\leq 2|x|} |y|^{s - |\beta|} | \psi_\delta(x-y) |dy + C \delta^{-d}\int_{|y|> 2|x|} |y|^{s - |\beta|} \frac{ \delta^{d +s + \epsilon}}{|x-y|^{d +s+ \epsilon} }dy
\end{align*}
The first term is bounded by $C |x|^{s - |\beta|}$. The second term is bounded for $|x| \geq \delta$ by
\begin{align*}
& C \delta^{s +\epsilon}\int_{|y|> 2|x|} |y|^{s - |\beta|} \frac{1}{(|y|-|x|)^{d+s + \epsilon}} dy \leq C \delta^{s +\epsilon} \int_{|y|> 2|x|} |y|^{s - |\beta|} \frac{1}{|y|^{d+s + \epsilon}} dy \\
&\leq C \delta^{s +  \epsilon} \int_{2|x|}^\infty r^{- |\beta|- d - \epsilon} r^{d-1} dr \leq C \delta^{s + \epsilon} |x|^{-|\beta|-\epsilon} \leq C |x|^{s- |\beta|} \ .
\end{align*}
This completes the proof of the pointwise estimates.

Finally, we apply the pointwise estimates to obtain Lemma \ref{reg ker est lem}. We define $\delta' = (C' + 1)\delta$, and without loss of generality, we assume $R \geq \delta'$. First, decompose the integral as
\begin{align*} 
&\int_{B_R} | \partial^\beta \grad K_\delta( x + g(x))| dx \\
&\quad \leq \int_{B_R \cap B_{\delta'}} | \partial^\beta \grad K_\delta(x + g(x))| dx + \int_{B_R \setminus B_{\delta'}} | \partial^\beta \grad K_\delta(x + g(x))| dx \\
&\quad= I_3 + I_4 
\end{align*}
When $|x| \leq \delta'$, $|x + g(x)| \leq 2\delta' = 2(C'+1)\delta$ and $
I_3 \leq C \delta^{s -|\beta|} \delta^d \leq C \delta^{s + d - |\beta|}$. When $|x| > \delta'$, we use that $|x+g(x)| \geq \delta' - C' \delta = \delta$ to conclude
\begin{align*}
I_4 &\leq C \int_{B_R \setminus B_{\delta'}} |x+g(x)|^{s - |\beta|} dx \ . 
\end{align*}
If $s - |\beta| >0$, the fact that $|x+g(x)| \leq 2R $ implies the integral is bounded by a constant which depends on $R$.
 If $s - |\beta| \leq 0$,
\begin{align*}
I_4 &\leq C \int_{B_R \setminus B_{\delta'}} (|x|-|g(x)|)^{s - |\beta|} dx  \leq C \int_{\delta'}^R (r-C'\delta)^{s - |\beta|} r^{d-1} dr \\
& \leq C \int_{\delta}^{R- C' \delta} r^{s - |\beta|} (r + C' \delta)^{d-1} dr 
\leq C \int_{\delta}^{R} r^{s + d - |\beta| -1} (1+C')^{d-1} dr \\
&\leq
\left\{\begin{alignedat}{3}
    & C (R^{s +d - |\beta|} - \delta^{s +d - |\beta|}) &&\leq C &\quad \mbox{if } s +d > |\beta| \ , \\
    & C (\log(R)-\log(\delta))&&\leq  C |\log \delta| &\quad \mbox{if } s +d = |\beta| \ , \\
    & C(\delta^{s +d - |\beta|} - R^{s +d - |\beta|} ) &&\leq C \delta^{s+d - |\beta|} &\quad \mbox{if } s +d < |\beta| \ .
  \end{alignedat}\right.
\end{align*}
The constant depends on the kernel, mollifier, dimension, $\beta$, and $R$.
\end{proof}
\textbf{Acknowledgements:} The authors would like to thank MSRI, where part of this work was completed. The authors would also like to thank Prof. Jos\'e Carrillo and Prof. Jeff Eldredge for very helpful conversations.
\bibliographystyle{amsplain}
\bibliography{ResearchProposalReferences}

\end{document}